\newtheorem{theorem}{Theorem}
\newtheorem{lemma}[theorem]{Lemma}
\newtheorem{propo}[theorem]{Proposition}
\newtheorem{clm}[theorem]{Claim}
\newcommand{\diam}{\ensuremath{\mathrm{diam}}}
\newcommand{\slope}{\ensuremath{\mathrm{slope}}}
\newcommand{\conv}{\ensuremath{\mathrm{conv}}}
\newcommand{\width}{\ensuremath{\mathrm{width}}}
\newcommand{\starify}{\ensuremath{\texttt{starify}}}
\newcommand{\cx}{\ensuremath{\mathrm{cx}}}
\newcommand{\op}{\ensuremath{\mathbf{op}}}
\newcommand{\ex}{\ensuremath{\mathbf{ex}}}
\newcommand{\ce}{\ensuremath{\mathbf{ce}}}
\newcommand{\ro}{\ensuremath{\mathbf{ro}}}
\newcommand{\es}{\ensuremath{\mathbf{es}}}
\newcommand{\er}{\ensuremath{\mathbf{er}}}
\newcommand{\ses}{\ensuremath{\mathbf{ses}}}
\newcommand{\sro}{\ensuremath{\mathbf{sro}}}
\newcommand{\ser}{\ensuremath{\mathbf{ser}}}
\newcommand{\sop}{\ensuremath{\mathbf{sop}}}
\newcommand{\sce}{\ensuremath{\mathbf{sce}}}
\newcommand{\ceil}[1]{\ensuremath{\left \lceil #1 \right \rceil}}
\g@addto@macro\bfseries{\boldmath}
\begin{document}

\title{Transition Operations over Plane Trees\thanks{A preliminary version of this paper appeared in the \emph{Proceedings of the 13th Latin American Theoretical INformatics Symposium (LATIN)}, 2018, pp. 835--848, LNCS, Springer,
\href{https://doi.org/10.1007/978-3-319-77404-6_60}{doi:10.1007/978-3-319-77404-6\_60}.}}

\author{Torrie L. Nichols\thanks{Department of Mathematics, California State University Northridge, Los Angeles, CA, USA.
\texttt{torrie.nichols.643@my.csun.edu, csaba.toth@csun.edu}}
\and
Alexander Pilz\thanks{Institute of Software Technology, Graz University of Technology, Austria.
\texttt{apilz@ist.tugraz.at}}
\and
Csaba D. T\'oth\footnotemark[2]
\and
Ahad N. Zehmakan\thanks{Department of Computer Science, ETH Z\"urich, Switzerland.
\texttt{abdolahad.noori@inf.ethz.ch}}
}

\date{}

\maketitle

\begin{abstract}
The operation of transforming one spanning tree into another by replacing an edge has been considered widely, both for general and planar straight-line graphs.
For the latter, several variants have been studied (e.g., \emph{edge slides} and \emph{edge rotations}).
In a transition graph on the set $\mathcal{T}(S)$ of noncrossing straight-line spanning trees on a finite point set $S$ in the plane, two spanning trees are connected by an edge if one can be transformed into the other by such an operation.
We study bounds on the diameter of these graphs, and consider the various operations on point sets in both general position and convex position.
In addition, we address variants of the problem where operations may be performed simultaneously or the edges are labeled.
We prove new lower and upper bounds for the diameters of the corresponding transition graphs
and pose open problems.\\

Keywords: exchange operation; spanning tree; planar straight-line graph; extremal combinatorics
\end{abstract}

\section{Introduction}
\label{sec:intro}

For a set $S$ of $n$ points in the plane,
let $\mathcal{T}(S)$ denote the set of noncrossing straight-line spanning trees on the vertex set $S$.
In the last 20 years, five different operations have been introduced over $\mathcal{T}(S)$. While all five operations are based on a classic exchange property of graphic matroids~\cite{Oaxley1993}, geometric conditions yield a rich hierarchy.

\smallskip\noindent\textbf{Elementary Operations.}
Let $T_1=(S,E_1)$ and $T_2=(S,E_2)$ be two trees in $\mathcal{T}(S)$.
\begin{itemize}\itemsep 0pt
\item An \textbf{exchange} is an operation that replaces $T_1$ by $T_2$ so that there are two edges, $e_1$ and $e_2$, such that $E_1\setminus E_2=\{e_1\}$ and $E_2\setminus E_1=\{e_2\}$ (i.e., $E_2$ can be obtained from $E_1$ by deleting an edge $e_1\in E_1$ and inserting a new edge $e_2\in E_2$).
\item A \textbf{compatible exchange} is an exchange such that the graph $(S,E_1\cup E_2)$ is
    a noncrossing straight-line graph (i.e., $e_1$ and $e_2$ do not cross).
\item A \textbf{rotation} is a compatible exchange such that $e_1$ and $e_2$ have a common endpoint $p=e_1\cap e_2$.
\item An \textbf{empty-triangle rotation} is a rotation such that the edges of neither $T_1$ nor $T_2$ intersect the interior of the triangle $\Delta(pqr)$ formed by the vertices of $e_1$ and~$e_2$.
\item An \textbf{edge slide} is an empty-triangle rotation such that $qr\in E_1\cap E_2$.
\end{itemize}

See \figurename~\ref{fig:operations} for illustrations. All five operations that we consider have been defined prior to our work (see below), but this is the first comprehensive study of all five operations.
Note that, for each of the five operations, the inverse of an operation (i.e., transforming $T_2$ into $T_1$) is of the same type.
Each operation $\op$ defines an undirected \emph{transition graph} $\mathcal{G}_{\op}(S)$, whose vertex set is $\mathcal{T}(S)$, and there is an edge between two trees $T_1,T_2\in \mathcal{T}(S)$ if and only if an operation $\op$ can transform $T_1$ into $T_2$. The transition graphs for all five operations are known to be connected (see Section~\ref{ssec:previous}). The \emph{diameter} $\diam(\mathcal{G}_{\op}(S))$ of the transition graph $\mathcal{G}_{\op}(S)$ is thus the maximum length of a shortest sequence of operations $\op$ that transforms one noncrossing straight-line spanning tree in $\mathcal{T}(S)$ into another.
We are interested in the asymptotic growth rate of the function
$f_{\op}(n):=\max_{|S|=n}\diam(\mathcal{G}_{\op}(S))$.

\begin{figure}[t]
\centering
\includegraphics{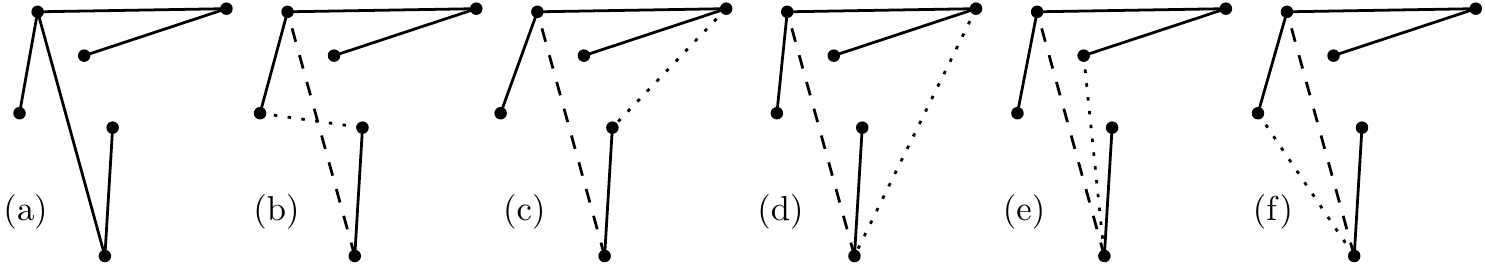}
\caption{A straight-line spanning tree (a), in which we replace the dashed edge by a dotted one, using an exchange (b), a compatible exchange (c), a rotation (d), an empty-triangle rotation (e), and an edge slide (f).}
\label{fig:operations}
\end{figure}

\paragraph{Simultaneous Operations.}
For each elementary operation $\op$, we define a \emph{simultaneous operation} $\sop$ on $\mathcal{T}(S)$ as follows. For two trees $T_1=(S,E_1)$ and $T_2=(S,E_2)$ in $\mathcal{T}(S)$, the operation $\sop$ replaces $T_1$ by $T_2$ if there is a bijection between $E_1\setminus E_2$ and $E_2\setminus E_1$ (the \emph{old} edges and \emph{new} edges, resp.) and each pair $(e_1,e_2)\in (E_1\setminus E_2)\times (E_2\setminus E_1)$ of corresponding edges satisfies the geometric conditions of the elementary operation $\op$ on $T_1$.
Importantly, we do \emph{not} require the graph $(S,E_1\setminus \{e_1\}\cup\{e_2\})$ to be in $\mathcal{T}(S)$, it is sufficient that each pair $(e_1,e_2)$ satisfies the geometric conditions and $T_1,T_2\in \mathcal{T}(S)$.
In particular, there is no geometric condition for a \textbf{simultaneous exchange}. For
\textbf{simultaneous compatible exchange}, $(S,E_1\cup \{e_2\})$ is a noncrossing straight-line graph for every $e_2\in E_2\setminus E_1$, and $(S,\{e_1\}\cup  E_2)$ is a noncrossing straight-line graph for every $e_1\in E_1\setminus E_2$; consequently $(S,E_1\cup E_2)$ must be a noncrossing straight-line graph. Simultaneous rotation, empty-triangle-rotation, and edge slide pose additional conditions on the pairs of corresponding edges.
We define the graph $\mathcal{G}_{\sop}(S)$ and maximum diameter $f_{\sop}(n)$ for simultaneous operations analogously. Clearly, $f_{\sop}(n)\leq f_{\op}(n)$.

\paragraph{General Position and Convex Position.}
We assume that $S$ is in \emph{general position} (i.e., no three points in $S$ are collinear). This assumption is for convenience only (all diameter bounds would hold regardless but would require a detailed discussion of special cases). Previous results (cf.~Section~\ref{ssec:previous}) are also subject to this assumption.
Arguably the most important special case is that $S$ is in convex position.
We are also interested in the asymptotic growth rate of the function $f_{\op}^{\cx}(n)$, which is equal to
$\max_{|S|=n}\diam(\mathcal{G}_{\op}(S))$, where $S$ is in convex position.
(Observe that, for the operations mentioned, the graphs $\mathcal{G}_{\op}(S)$ are isomorphic for any two sets $S$ of $n$ points in convex position.)
The function $f_{\sop}^{\cx}(n)$ is defined analogously.
Trivially, $f_{\op}^{\cx}(n)\leq f_{\op}(n)$ and $f_{\sop}^{\cx}(n)\leq f_{\sop}(n)$ for any operation~$\op$.

\paragraph{Labeled Edges.}
Each of the five elementary operations defined above exchanges an edge of a spanning tree with a new edge, and the simultaneous operations require a bijection between the old and the new edges.
We can extend these operations to edge-labeled spanning trees such that whenever an old edge $e_1$ is replaced by a new edge $e_2$, the label of $e_1$ is transferred to $e_2$.
For a spanning tree $T=(S,E)$ on a set $S$ of $n$ points in general position, an \emph{edge labeling} is a bijective function $\lambda:E\rightarrow \{1,\ldots , n-1\}$.
In particular, every tree in $\mathcal{T}(S)$ admits $(n-1)!$ edge labelings.
Denoting by $\mathcal{L}(S)$ the set of
edge-labeled noncrossing straight-line spanning trees on $S$, we can define a transition graph $\mathcal{G}_{\op}^{L}(S)$ on the vertex set $\mathcal{L}(S)$ in which two edge-labeled trees are adjacent
if and only if an operation $\op$ can transform one into the other.
By definition, $\mathcal{G}_{\op}(S)$ is a quotient graph of $\mathcal{G}_{\op}^{L}(S)$; consequently
$\diam(\mathcal{G}_{\op}(S))\leq \diam(\mathcal{G}_{\op}^L(S))$.

\paragraph{Organization.}
We summarize the current best lower and upper bounds for the diameter of transition graphs under the five elementary operations in Section~\ref{ssec:results}.
To put our results into context, we review related previous work on other elementary graph operations in Section~\ref{ssec:previous}.
Our new results on the diameter of transition graphs under rotation, empty-triangle rotation, and edge slide are presented in Sections~\ref{sec:Rotation}--\ref{sec:EdgeSlide}.
We consider the edge-labeled variant of the problem in Section~\ref{sec:labeled}, and conclude with open problems in Section~\ref{sec:conclusion}.

\subsection{Contributions and Related Previous Results}
\label{ssec:results}

The current best diameter bounds for the five operations and
their simultaneous variants are summarized in Table~\ref{tbl:bounds_general}.
Bounds for points in convex position are shown in Table~\ref{tbl:bounds_convex}.
The operations are presented from strongest to weakest: we say that an operation $\op_1$ is \emph{stronger} than operation $\op_2$ if every operation $\op_2$ is also an operation $\op_1$.
As $\mathcal{G}_{\op_2}(S)$ is a subgraph of $\mathcal{G}_{\op_1}(S)$, we have $f_{\op_1}(n)\leq f_{\op_2}(n)$ and $f_{\op_1}^{\cx}(n)\leq f_{\op_2}^{\cx}(n)$.
It is worth noting that even though we briefly review the current best bounds for the two strongest operations, our main results concern the three weakest operations: rotation (Section~\ref{sec:Rotation}), empty-triangle rotation (Section~\ref{sec:EmptyTriangleRotation}), and edge slide (Section~\ref{sec:EdgeSlide}).
See Tables~\ref{tbl:bounds_general} and~\ref{tbl:bounds_convex}, where our contributions are marked with the corresponding theorems and propositions.

\begin{table}[H]
\begin{center}
\bgroup
\def\arraystretch{1.15}
\begin{tabular}{|l|l|l|l|l|}
\hline
Operation& Single Oper. & Single Oper. & Simultaneous & Simultaneous  \\
& Lower Bd.  & Upper Bd. & Lower Bd. & Upper Bd.\\
\hline \hline
Exchange   & $\lfloor \frac{3n}{2}\rfloor -5$~\cite{HERNANDO1999} & $2n-4$ & 1 & 1 \\
\hline
Compatible Ex.  & $\lfloor \frac{3n}{2}\rfloor -5$ & $2n-4$ & $\Omega(\frac{\log n}{\log \log n})$ \cite{buchin2009transforming} & $O(\log n)$~\cite{aichholzer2002sequences}\\
\hline
Rotation  & $\lfloor \frac{3n}{2}\rfloor -5$ & $2n-4$~\cite{avis1996reverse} & $\Omega(\frac{\log n}{\log \log n})$ & $O(\log n)$ [Thm.~\ref{thm:rotations}]\\
\hline
Empty-Tri.~Rot. & $\lfloor \frac{3n}{2}\rfloor -5$ & $O(n\log n)$~[Thm.~\ref{thm:empty_rotation_general}] & $\Omega(\log n)$ [Thm.~\ref{thm:sim_empty_lower}] & $8n$ [Thm.~\ref{thm:sim_empty_upper}]\\
\hline
Edge Slide  & $\Omega(n^{2})$~\cite{aichholzer2007quadratic} & $O(n^{2})$ \cite{aichholzer2007quadratic} & $\Omega(n)$ [Prop.~\ref{pro:sim-slide}]& $O(n^{2})$  \cite{aichholzer2007quadratic}\\
\hline
\end{tabular}
\egroup
\end{center}
\caption{Diameter bounds for $n$ points in general position.}\label{tbl:bounds_general}
\end{table}

\begin{table}[H]
\begin{center}
\bgroup
\def\arraystretch{1.15}
\begin{tabular}{|l|l|l|l|l|}
\hline
Operation&  Single Oper. & Single Oper. & Simultaneous & Simultaneous  \\
 & Lower Bd.  & Upper Bd. & Lower Bd. & Upper Bd.\\
\hline \hline
Exchange  & $\lfloor \frac{3n}{2}\rfloor -5$~\cite{HERNANDO1999} & $2n-5$ & 1 & 1\\
\hline
Compatible Ex. &  $\lfloor \frac{3n}{2}\rfloor -5$ & $2n-5$ & 2 & 2 \\
\hline
Rotation  &  $\lfloor \frac{3n}{2}\rfloor -5$ & $2n-5$  & 3 [Prop.~\ref{pro:rotation-lowerbound}]& 4\\
\hline
Empty-Tri.~Rot. &  $\lfloor \frac{3n}{2}\rfloor -5$ & $2n-5$  & 3& 4 [Thm.~\ref{thm:sim_empty_upper_convex}]\\
\hline
Edge Slide &  $\lfloor \frac{3n}{2}\rfloor -5$ & $2n-5$ [Thm.~\ref{thm:slide_upper_convex}] & $\Omega(\log n)$ [Thm.~\ref{thm:sim_slide_lower_convex}]& $O(\log n)$ [Thm.~\ref{thm:sim_slide_upper_convex}]\\
\hline
\end{tabular}
\egroup
\end{center}
\caption{Diameter bounds for $n$ points in convex position.}\label{tbl:bounds_convex}
\end{table}

\paragraph{Exchange} (operation~$\ex$, for short).
For $n\geq 4$, $n$ points in convex position admit (at least) two edge-disjoint spanning trees in $\mathcal{T}(S)$. Since each elementary operation replaces only one edge, this yields a trivial lower bound of $n-1\leq f_{\ex}(n)$ for the diameter of the transition graph. Hernando et al.~\cite{HERNANDO1999} gave a lower bound of $\lfloor 3n/2\rfloor -5\leq f_{\ex}^{\cx}(n)$ for $n$ points in convex position. An upper bound of $f_{\ex}(n)\leq 2n-4$, $n\geq 2$, for points in general position follows from an algorithm by Avis and Fukuda~\cite{avis1996reverse},
in which all exchange operations are in fact rotations (see discussion below).
In the simultaneous setting, the lower and upper bound of $1$ is clear: Given two trees $T_1,T_2\in \mathcal{T}(S)$, one can remove all edges of $E_1\setminus E_2$ and insert all edges of $E_2\setminus E_1$ simultaneously (with an arbitrary bijection between these edge sets). In particular, the simultaneous exchange graph is a complete graph on $|\mathcal{T}(S)|$ vertices for every point set $S$ in general position.

\paragraph{Compatible Exchange} (operation~$\ce$, for short).
For single operations, linear lower and upper bounds for the diameter of the transition graph follow from corresponding bounds for weaker and stronger operations, respectively (cf.~Table~\ref{tbl:bounds_general}).
A simultaneous compatible exchange graph is typically not a complete graph.
Buchin et al.~\cite{buchin2009transforming} constructed a set $S$ of $n$ points and a pair of trees $T_1,T_2\in \mathcal{T}(S)$ such that $\Omega(\log n/\log\log n)$ simultaneous compatible exchanges are required to transform $T_1$ into $T_2$.
Aichholzer et al.~\cite{aichholzer2002sequences} proved that, for every set $S$ of $n$ points, every $T\in \mathcal{T}(S)$ can be transformed into a Euclidean minimum spanning tree of $S$ using $O(\log n)$ simultaneous compatible exchanges; moreover, each operation decreases the Euclidean weight of the tree. Later, Aichholzer et al.~\cite{aichholzer2006transforming} showed that every $T\in \mathcal{T}(S)$ can be transformed into some canonical tree using $O(\log k)$ simultaneous compatible exchanges, where $k\leq \lceil n/3\rceil$ is the number of convex layers of $S$. In particular, $\Omega(\log n/\log\log n)\leq f_{\sce}(n)\leq O(\log n)$, where ${\sce}$ stands for simultaneous compatible exchange. These bounds leave only a sub-logarithmic gap on the asymptotic growth rate of $f_{\sce}(n)$. It is easy to see that  $f_{\sce}^{\cx}(n)=2$. Indeed, a plane spanning tree $T_{1}$ can be transformed into any other plane spanning tree $T_{2}$ by exchanging all edges of $T_{1}$ with the edges of a path $T_0$ along the convex hull, and then exchanging all edges of $T_0$ with $T_{2}$. The existence of two incompatible spanning trees for all $n\geq 4$ implies a lower bound of~$2$.

\paragraph{Rotation} (operation~$\ro$, for short).
The edge rotation operation was first introduced by Chartrand et al.~\cite{CSZ85} for abstract graphs. We consider it over $\mathcal{T}(S)$. For single rotations, the lower bound follows from the corresponding bound for stronger operations. An upper bound $f_{\ro}(n)\leq 2n-4$ follows from a proof by Avis and Fukuda~\cite{avis1996reverse}: They show that every tree in $\mathcal{T}(S)$ can be carried to a star centered at an extreme point of $S$ using at most $n-2$ operations; hence the diameter is bounded by $2(n-2)$. They consider exchange operations, but all exchanges in their proof happen to be rotations.
For simultaneous rotations, $\sro$, we prove the upper bound $f_{\sro}(n)\leq O(\log n)$ (Theorem~\ref{thm:rotations}). A lower bound of $f_{\sro}(n)\geq \Omega(\log n/\log\log n)$ follows from the corresponding bound for the stronger operation of simultaneous compatible exchanges.

For simultaneous rotations and convex position, an algorithm for the weaker operation of empty-triangle rotations yields an upper bound of 4, and we establish a lower bound of 3 (Proposition~\ref{pro:rotation-lowerbound}).

\paragraph{Empty-Triangle Rotation} (operation~${\er}$, for short).
Empty-triangle rotation is a very natural variant of rotation; however, there is not much known about it.
Cano et al.~\cite{cano2013edge} considered empty-triangle rotations over all noncrossing straight-line graphs on a set $S$ of $n$ points with $m$ edges, where $m$ is less than the number of edges in a triangulation of $S$.
They showed that the corresponding transition graph is connected, its diameter is $O(n^2)$, and this bound is the best possible when $m=3n-O(1)$. In the special case $m=n-1$, their result implies that a sequence of $O(n^2)$ empty-triangle rotations can transform a tree in $\mathcal{T}(S)$ into any other tree in $\mathcal{T}(S)$; the intermediate graphs are noncrossing straight-line graphs but they are not necessarily spanning trees.

For single operations, the lower bounds $f_{\er}(n)\geq \lfloor\frac{3n}{2}\rfloor -5$ and $f_{\er}^{\cx}(n)\geq \lfloor\frac{3n}{2}\rfloor -5$ follow from the corresponding bounds for stronger operations. For point sets in general position, we prove an upper bound of $f_{\er}(n)\leq O(n\log n)$ (Theorem~\ref{thm:empty_rotation_general}). For the convex case, we provide a linear upper bound for the weaker operation of edge slide, which yields $f_{\mathrm{er}}^{\cx}(n)=\Theta(n)$.
In the simultaneous setting, we provide a lower bound of $f_{\ser}(n)=\Omega(\log n)$
and an upper bound of $f_{\ser}(n)\leq 8n$ in Theorems~\ref{thm:sim_empty_upper} and~\ref{thm:sim_empty_lower}, respectively. For the case of convex position, we prove $f_{\ser}^{\cx}(n)=\Theta(1)$; see Theorem~\ref{thm:sim_empty_upper_convex}.

\paragraph{Edge Slide} (operation~$\es$, for short).
Aichholzer, Aurenhammer, and Hurtado~\cite{aichholzer2002sequences} proved that $\mathcal{G}_{\es}(S)$, is connected for every point set $S$ in general position.
Aichholzer and Reinhardt~\cite{aichholzer2007quadratic} proved $f_{\es}(n)=\Theta(n^2)$.
For point sets in convex position, we show that $f_{\es}^{\cx}(n)=\Theta(n)$; see Theorem~\ref{thm:slide_upper_convex}.
The simultaneous variant has not been previously considered.
A linear lower bound and a quadratic upper bound can be easily derived from diameter bounds for single operations over point sets in general position, as will be discussed in Section~\ref{sec:edgeslide_general}. For points in convex position, however, we prove an asymptotically tight bound $f_{\ses}^{\cx}(n)=\Theta(\log n)$; see Theorems~\ref{thm:sim_slide_lower_convex} and~\ref{thm:sim_slide_upper_convex}.

\paragraph{Labeled Edges.}
The current best diameter bounds for the transition graphs of edge-labeled noncrossing straight-line spanning trees on $n$ vertices are
summarized in Tables~\ref{tbl:labeled_general} and~\ref{tbl:labeled_convex}. Our contributions are marked with the corresponding theorems and propositions. Some of the bounds immediately follow from the corresponding bounds for the unlabeled variants. Some other bounds are proved using a combination of new ideas and adaptations of results and techniques from prior work, namely~\cite{aichholzer2002sequences,aichholzer2007quadratic,avis1996reverse,BLPV18,cano2013edge,Garcia14,dual_diameter,pgmp-eptvsp-91,STT92}.

\begin{table}[H]
\begin{center}
\bgroup
\def\arraystretch{1.15}
\begin{tabular}{|l|l|l|l|l|}
\hline
Operation& Single Oper. & Single Oper. & Simultaneous & Simultaneous  \\
 &  Lower Bd. & Upper Bd.  & Lower Bd. & Upper Bd.\\
\hline \hline
Exchange &$\lfloor\frac{3n}{2}\rfloor -5$~\cite{HERNANDO1999} & $11n-22$  & 3 [Prop.~\ref{pro:labeled-sim-lower}] &  3 [Prop.~\ref{pro:labeled-sim-exchange} \&~\ref{pro:labeled-cx-sim-exchange}]\\
\hline
Compatible Ex. & $\lfloor\frac{3n}{2}\rfloor -5$ & $11n-22$ & $\Omega(\frac{\log n}{\log \log n})$ \cite{buchin2009transforming} & $O(\log n)$ [Prop.~\ref{pro:labeled-sim-exchange}] \\
\hline
Rotation & $\lfloor\frac{3n}{2}\rfloor -5$ & $11n-22$ [Thm.~\ref{thm:labeled-rotation}] & $\Omega(\frac{\log n}{\log \log n})$ & $O(n)$\\
\hline
Empty-Tri.~Rot. & $\lfloor\frac{3n}{2}\rfloor -5$ & $O(n\log n)$~[Thm.~\ref{thm:labeled_empty_triangle_rotation_upper}] & $\Omega(\log n)$ [Thm.~\ref{thm:sim_empty_lower}] & $O(n)$~[Thm.~\ref{thm:labeled_sim_empty_triangle_rotation_upper}]\\
\hline
Edge Slide & $\Omega(n^{2})$~\cite{aichholzer2007quadratic} & $O(n^{2})$~[Thm.~\ref{thm:labeled_edge_slides_upper}]  & $\Omega(n)$  [Prop.~\ref{pro:sim-slide}]& $O(n^{2})$ [Thm.~\ref{thm:labeled_edge_slides_upper}]\\
\hline
\end{tabular}
\egroup
\end{center}
\caption{Diameter bounds for labeled spanning trees for $n$ points in general position.}\label{tbl:labeled_general}
\end{table}

\begin{table}[H]
\begin{center}
\bgroup
\def\arraystretch{1.15}
\begin{tabular}{|l|l|l|l|l|}
\hline
Operation&  Single Oper. & Single Oper. & Simultaneous & Simultaneous  \\
 & Lower Bd. & Upper Bd. & Lower Bd. & Upper Bd.\\
\hline \hline
Exchange   & $\lfloor\frac{3n}{2}\rfloor -5$~\cite{HERNANDO1999} & $6n-13$ & 3 [Prop.~\ref{pro:labeled-sim-lower}]& 3 [Prop.~\ref{pro:labeled-cx-sim-exchange}]\\
\hline
Compatible Ex.  &  $\lfloor\frac{3n}{2}\rfloor -5$ & $6n-13$ & 3 & 4 [Prop.~\ref{pro:labeled-cx-sim-exchange}]\\
\hline
Rotation &  $\lfloor\frac{3n}{2}\rfloor -5$ & $6n-13$ & $3$ & $O(\log n)$\\
\hline
Empty-Tri.~Rot. &  $\lfloor\frac{3n}{2}\rfloor -5$ & $6n-13$ [Prop.~\ref{pro:labeled-cx-emptytriangle}] & $3$& $O(\log n)$ [Prop.~\ref{pro:labeled-cx-emptytriangle}]\\
\hline
Edge Slide  &  $\Omega(n\log n)$ [Thm.~\ref{thm:labeled-edgeslide-cx-lower}] & $O(n \log n)$ [Thm.~\ref{thm:labeled-edgeslide-cx-upper}]& $\Omega(\log n)$ [Thm.~\ref{thm:labeled-edgeslide-cx-lower}]& $O(n)$ [Thm.~\ref{thm:labeled-edgeslide-cx-upper}] \\
\hline
\end{tabular}
\egroup
\end{center}
\caption{Diameter bounds for labeled spanning trees for $n$ points in convex position.}\label{tbl:labeled_convex}
\end{table}

\subsection{Further Related Work}
\label{ssec:previous}

Exchanging edges such that both the initial and the resulting graph belong to the same graph class is a well-studied operation in various contexts;
see~\cite{flip_survey} for a survey.
Perhaps the best known operation on trees is the classic \emph{rotation} on \emph{ordered rooted binary trees}, which is equivalent to the associativity rule over $n$-symbol words, and to edge flips in triangulations of $n+2$ points in convex position. Sleator, Tarjan, and Thurston~\cite{STT88} gave an upper bound of $2n-10$ for the diameter of the transition graph for $n\geq 13$, later Pournin~\cite{Pournin14} gave a purely combinatorial proof and showed that this bound is tight for all $n\geq 13$.

For abstract trees on $n$ labeled vertices, any spanning tree $T_1=(S,E_1)$ can be transformed into any other tree $T_2=(S,E_2)$ using $|E_1\setminus E_2|$ exchange operations, by the classic exchange property of graphic matroids (see, e.g.,~\cite{Oaxley1993}). Consequently, the diameter of the transition graph is $n-1$.

There are $n^{n-2}$ abstract spanning trees on $n$ labeled vertices for $n\geq 3$~\cite{Cayley89}.
In contrast, the number of noncrossing straight-line trees on $n$ points in the plane is in $O(141.07^n)$~\cite{HSSSTW13} and $\Omega(6.75^n)$~\cite{AHH+07,FN99}.
While the transition graph of the exchange operation over $\mathcal{T}(S)$ is a subgraph of the transition graph over abstract labeled trees (it has fewer nodes), this does not imply any relation between the diameters of these transition graphs.

The operations of exchange, rotation, and edge slide on \emph{unlabeled} abstract spanning trees on $n$ vertices
were considered by Faudree et al.~\cite{FAUDREE1994}, and by Goddard and Swart~\cite{GODDARD1996}.
They define transition graphs over isomorphism classes, proving upper bounds of $n-3$, $2n-6$, and $2n-6$, respectively, on their diameters. For all three operations, a lower bound of $n-3$ is established by the distance between a path and a star.

Geometric variants, where the vertex set $S$ is a set of points in the plane, were first considered by Avis and Fukuda~\cite{avis1996reverse} for the efficient enumeration of all trees in $\mathcal{T}(S)$.
Interestingly, the \emph{$x$-type of $S$} (i.e., the intersection graph of the edges in the straight-line drawing of the complete graph on $S$) can be reconstructed from the transition graph of exchanges~\cite{Keller2016}, or compatible exchange~\cite{OropezaT19}; if $S$ is in convex position, the $x$-type is already determined by the exchanges on spanning paths~\cite{KellerS18}.

Akl et al.~\cite{AKL2007} and Chang and Wu~\cite{CHANG2009} considered the exchange operation over $\mathcal{P}(S)$, the set of noncrossing spanning paths on $n$ points in convex position.
They proved that the diameter of the transition graph is $2n-6$ for $n\geq 5$ and $2n-5$ for $n\in \{3,4\}$.
Wu et al.~\cite{WU2011} used these operations for generating all paths in $\mathcal{P}(S)$ in $O(1)$ amortized time per path.
It remains an open problem whether the exchange graph of $\mathcal{P}(S)$ is connected for general point sets~$S$. Under the weaker operation of edge slides, however, the transition graph of $\mathcal{P}(S)$ is disconnected for $n\geq 4$, since an edge can slide only if it is incident to one of the two leaves.

We are perhaps the first to study the diameter of the transition graph in the edge-labeled variant.
However, transition graphs of edge flips in edge-labeled triangulations of a point set have been studied extensively.
Recently, Bose et al.~\cite{BLPV18} considered the orbits of individual edges.
Lubiw et al.~\cite{LMW19} proved that a sequence of $O(n^7)$ flips can carry any edge-labeled triangulation to any other (by showing that the 2-skeleton of the flip complex is contractible).
Cano et al.~\cite{cano2013edge} considered empty-triangle rotations over edge-labeled noncrossing planar straight-line graphs on a set $S$ of $n$ points with $m$ edges, where $m$ is less than the number of edges in a triangulation of $S$.
They  proved that the transition graph is connected, but did not establish upper and lower bounds on the diameter.

\section{Rotation}
\label{sec:Rotation}

\subsection{General Position}

In this section, we prove the upper bound $f_{\sro}(n)=O(\log n)$ under simultaneous rotations $\sro$ (see Theorem~\ref{thm:rotations}). We bound the diameter of the transition graph by an algorithm that transforms every tree $T\in \mathcal{T}(S)$ into a star, combining ideas from~\cite{aichholzer2002sequences} and~\cite{avis1996reverse}. Our upper bound does not match the lower bound of $\Omega(\log n/\log\log n)$,
which is derived from the stronger simultaneous compatible exchanges.

\begin{theorem}\label{thm:rotations}
Every plane tree in $\mathcal{T}(S)$, $|S|=n$, can be transformed into any other tree in $\mathcal{T}(S)$ using $O(\log n)$ simultaneous rotations;
that is, $f_{\sro}(n)=O(\log n)$.
\end{theorem}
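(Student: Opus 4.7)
The plan is to bound the diameter of $\mathcal{G}_{\sro}(S)$ by showing that every $T\in\mathcal{T}(S)$ can be transformed into a fixed canonical tree $T^\ast$ using $O(\log n)$ simultaneous rotations; by reversibility of rotations, the triangle inequality then bounds the diameter between two arbitrary trees by $O(\log n)$. I would take $T^\ast$ to be the star centered at an extreme vertex $p\in S$ (say the leftmost point), following Avis and Fukuda, because the extremality of $p$ will ensure that new edges incident to $p$ can be drawn without crossings.

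Rooting $T$ at $p$, the natural progress measure is the depth of each vertex. Each round executes a maximal admissible set of \emph{grandparent shortcuts}: for each vertex $v$ of depth at least $2$, with parent $u(v)$ and grandparent $w(v)$, replace the edge $e_1=u(v)v$ by the edge $e_2=vw(v)$. The two edges share the endpoint $v$, so each pair is a rotation in the sense of the elementary definition; combinatorially, each shortcut decreases the depth of $v$ by one, so applying the shortcut to every vertex of depth $\ge 2$ at once halves the maximum depth of the tree. Iterating $O(\log n)$ times reduces the maximum depth to $1$, producing $T^\ast$. This is the Avis--Fukuda reduction to a star, compressed into $O(\log n)$ parallel rounds in the spirit of the weight-decreasing batches of Aichholzer, Aurenhammer, and Hurtado.

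The main obstacle is geometric: one must show that a constant-fraction subset of the proposed grandparent shortcuts can be executed simultaneously, i.e., the resulting edge set is a noncrossing straight-line spanning tree in $\mathcal{T}(S)$, and the compatibility conditions of the simultaneous rotation hold (each new edge $vw(v)$ does not cross $T_1$, and each old edge $u(v)v$ does not cross $T_2$). Here the extremality of $p$ is crucial: as seen from each internal vertex $u\ne p$, all children lie in a wedge of angular measure less than $\pi$, while the grandparent $w$ lies on the opposite side of $u$. Consequently, for each internal vertex $u$, an angular-order rule among the children of $u$ selects a subset of shortcuts whose new edges fit inside the triangles formed by $u$, its selected children, and $w$, each of which is free of tree edges by planarity of $T$. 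A standard amortization argument --- charging each unselected shortcut to a selected neighbor in the angular order --- shows that at least a constant fraction of eligible vertices are shortcut in each round, so the maximum depth still decreases by a constant factor per round. This yields the stated $O(\log n)$ bound, and will be the most delicate part of the proof to execute rigorously.
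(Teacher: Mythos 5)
Your overall plan (reduce every tree to a star centered at an extreme point $p$ and invoke the triangle inequality) is the same as the paper's, but the parallel step you propose --- grandparent shortcuts driven by depth --- has a genuine gap precisely at the point you flag as delicate, and the fix you sketch does not work. The claim that, for an internal vertex $u\neq p$, all children lie in a wedge of angle less than $\pi$ while the grandparent $w$ lies on the opposite side is false: extremality of $p$ constrains only the edges incident to $p$ itself; at an arbitrary internal vertex the children and the parent may interleave arbitrarily in angular order. Likewise, even though $uv,uw\in T$ cannot be crossed, planarity of $T$ does not make the triangle $\Delta(v,u,w)$ free of tree edges: other subtrees can place vertices inside it, with edges entering through the open side $vw$. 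Most fundamentally, the proposed new edge $vw(v)$ is a chord from $v$ to its grandparent and can cross edges of $T$ that are far from $u$ in the tree, so a selection rule that is local to the angular order of the children of $u$ cannot certify what a simultaneous rotation requires: every new edge must be noncrossing with all of $E_1$ and every old edge noncrossing with all of $E_2$. A secondary unproven step is the progress bound: even granting feasibility, selecting ``a constant fraction of eligible vertices'' globally does not imply the maximum depth drops by a constant factor; for that you would need a constant fraction of the vertices along every root-to-leaf path to be shortcut.

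For contrast, the paper sidesteps both issues by not working with depth at all. After a projective transformation sending $p$ to vertical infinity, every vertex shoots a vertical downward ray; for each edge $e$ hit by some rays, the $x$-monotone polygon $P_e$ between $e$ and the chain of those vertices is shown to be empty of tree edges (this is the paper's Claim~\ref{clm:noncrossing}, the analogue of the emptiness you tried to get ``by planarity''), and the vertices are reattached along the boundary of $P_e$ after deleting its upper edge of maximum width. The progress measure is the horizontal width of an edge, which at least halves in each round, giving $O(\log n)$ rounds; and since this ``starify'' step is only a simultaneous compatible exchange (some target edges may already be present), it is then simulated by four simultaneous rotations using a triangulation of $P_e$. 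If you wish to salvage a depth- or shortcut-based argument, you need some comparable global structure (an empty region containing both old and new edges) guaranteeing that all new edges are crossing-free simultaneously; the local angular-order charging argument cannot provide it.
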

\begin{proof}
Let $S$ be a set of $n$ points in general position, and let $p$ be an extremal point in $S$. We show that every $T = (S,E)\in \mathcal{T}(S)$ can be transformed into a star centered at $p$ using $O(\log n)$ simultaneous rotations, which readily implies $f_{\sro}(n)=O(\log n)$.

We define a simultaneous compatible exchange, $\starify$, on $\mathcal{T}(S)$, and then show that
\begin{enumerate}\itemsep 0pt
\item $O(\log n)$ successive $\starify$ operations can transform every $T\in \mathcal{T}(S)$ into the star centered at $p$, and
\item each $\starify$ operation can be replaced by at most four simultaneous rotations.
\end{enumerate}

\paragraph{Preliminaries.}
For convenience, we embed the Euclidean plane $\mathbb{R}^2$ into the real projective plane $P\mathbb{R}^2$
by adding a line ``at infinity.'' Apply a projective transformation that maps $p$ to the point at $y=-\infty$ at infinity, and a tangent line of $\conv(S)$ incident to $p$ to the line at infinity. Note that a line segment $pq$ becomes a vertical downward ray emanating from $q$. Since $S$ is in general position, no two points in $S\setminus \{p\}$ have the same $x$-coordinate. Denote by $x_q$ the $x$-coordinate of a point $q\in S\setminus \{p\}$.
For $q,r\in S\setminus \{p\}$, let
\begin{equation*}
\width(qr)=|\{s\in S\setminus \{p\}: \min(x_q,x_r)\leq x_s<\max(x_q,x_r)\}|,
\end{equation*}
that is, the number of points in $S\setminus \{p\}$ lying in the left-closed right-open vertical slab spanned by $qr$,
that we denote by $W(qr)$. If $r$ or $q$ equals the point $p$ at infinity, we define $\width(qr)=0$. Clearly, $\width(qr)\leq |S|-2$ for all $q,r\in S$ (as neither $p$ nor the rightmost point in $S\setminus \{p\}$ is contained in any of these slabs). Note also that for any sequence of points $q_0,\ldots , q_t\in \mathbb{R}^2$ sorted by $x$-coordinates, we have $\width(q_0q_t)=\sum_{i=1}^t \width(q_{i-1}q_i)$.

The edges in $E$ can be ordered as follows. First define a binary relation~$\prec$ on $E$
such that $e_1\prec e_2$ if $W(e_1)\cap W(e_2)\neq \emptyset$  (i.e., the slabs $W(e_1)$ and $W(e_2)$ overlap),
and $e_1$ is below $e_2$ within $W(e_1)\cap W(e_2)$.
This relation is acyclic (every nonempty set of edges has a minimum element,
as the lower envelope of noncrossing segments contains one of the segments).
Consequently, its transitive closure is a poset.
Fix an arbitrary linear extension of this partial order.

\paragraph{Definition of operation $\starify$.}
Let $T=(S,E)\in \mathcal{T}(S)$. Refer to \figurename~\ref{fig:vertical}.
Designate $p$ as the root of $T$, and direct its edges toward the root.
For every vertex $s\in S\setminus \{p\}$, denote by $e_s\in E$ the unique outgoing edge.
We will rotate each edge $e_s$ to some edge $e'_s$ incident to $s$ (possibly, $e_s=e'_s$) such that the union of all old and new edges $\bigcup_{s\in S\setminus \{p\}} \{e_s,e'_s\}$ forms a noncrossing graph (hence $\starify$ is a simultaneous compatible exchange operation).

From every $s\in S\setminus \{p\}$, draw a vertical downward ray~$r_s$ until it either reaches $p$ (at infinity) or crosses some edge in $E$.
If $r_s$ reaches $p$, then let $e'_s=sp$ (possibly $e'_s=e_s)$. It remains to define the image $e'_s$ for all other edges $e_s\in E$.
For every edge $e\in E$, let $S_e$ be the set of vertices $s\in S\setminus \{p\}$ such that $r_s$ hits the interior of $e$. For every edge $e\in E$ where $S_e\neq \emptyset$, we create an $x$-monotone polygon $P_e$ bounded by two $x$-monotone chains: the lower chain consists of the single edge $e$, and the upper chain connects the endpoints of $e$ via the points in $S_e$ sorted by increasing $x$-coordinates; see~\figurename~\ref{fig:vertical}.
In particular, the upper chain consists of precisely $|S_e|+1$ line segments.
For the operation $\starify$, remove an edge of the upper chain that has maximum width, direct the resulting two $x$-monotone paths to the two endpoints of $e$, and define $e'_s$ for all $s\in S_e$ to be the unique outgoing edge along these paths. This completes the description of operation $\starify$.

\begin{figure}[htbp]
\centering
\includegraphics{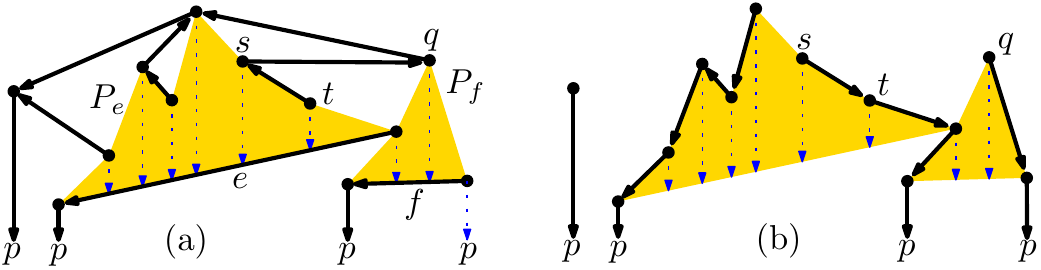}
\caption{(a) A straight-line spanning tree $T$ where $p=(0,-\infty)$, dotted vertical downward rays,
and the shaded polygons $P_e$ and $P_f$ for edges $e$ and $f$. (b) The result of operation $\starify$.
This operation is not a simultaneous rotation: Edge $st$ is an edge of both $T$ and $\starify(T)$,
so it cannot be rotated to any other edges within a simultaneous rotation.}
\label{fig:vertical}
\end{figure}

\paragraph{Correctness.}
We first show that operation $\starify$ is a simultaneous compatible exchange operation. (Later, we show how to model a $\starify$ operation with up to four simultaneous rotations.) We need to show that if $T\in \mathcal{T}(S)$, then $\starify(T) \in \mathcal{T}(S)$, and the edges of $T$ and $\starify(T)$ do not cross.
We start by proving the following claim.

\begin{clm}\label{clm:noncrossing}
For every edge $e\in E$, where $S_e\neq \emptyset$, the interior of the polygon $P_e$ is disjoint from the edges in $E$.
\end{clm}
\begin{proof}
Suppose, to the contrary, that an edge in $E$ intersects the interior of $P_e$.
If there is an edge $e'\in E$ that crosses the boundary of $P_e$ (at least) twice, then both crossings are on the upper chain of $P_e$, since the edges in $E$ are noncrossing, so $e'$ cannot cross $e$.
But then the upper chain of $P_e$ has at least one vertex $u$ between the two crossings with $e'$, and the vertical downward ray from $u$ would hit $e'$ before $e$, contradicting our assumption that $u\in S_e$.
Suppose now that some edge in $E$ crosses the boundary of $P_e$ once. Let $e''\in E$ be the minimal such edge in the linear extension of $\prec$, and let $v''$ be the (unique) endpoint of $e''$ in the interior of $P_e$.
By the minimality of $e''$, the vertical downward ray from $v''$ hits the edge $e$;
hence $v''\in S_e$, which contradicts the assumption that $v''$ lies in the interior of $P_e$.
This completes the proof of Claim~\ref{clm:noncrossing}.
\end{proof}

By Claim~\ref{clm:noncrossing}, the edges in $E$ do not cross any edges of the polygons $P_e$, where $S_e\neq \emptyset$. Since the polygons $P_e$, $e\in E$, are pairwise interior-disjoint, the edges of $\starify(T)$ do not cross each other.

It remains to show that $\starify(T)$ is a spanning tree. By construction, the number of edges remains the same, and every vertex in $S\setminus \{p\}$ has an outgoing edge. So it is enough to show that the graph $\starify(T)$ contains a directed path from every vertex in $S\setminus \{p\}$ to $p$. Recall that we have ordered the edges in $E$ consistently with the above-below relationship. For each edge~$e \in E$, the vertices in $S_e$ are connected to the endpoints of $e$ in $\starify(T)$. Even though an edge $e\in E$ may not be present in $\starify(T)$, the two  endpoints of $e$ each have an outgoing edge to some endpoint of some edges $e_1\in E$ and $e_2\in E$, resp.,
with $e_1\prec e$ and $e_2\prec e$ (possibly, $e_1 = e_2$), or directly to~$p$.
Consequently, $\starify(T)$ contains a directed path from every vertex in $S\setminus \{p\}$ to $p$.

\paragraph{A sequence of $O(\log n)$ $\starify$ operations.}
Let $T_0\in \mathcal{T}(S)$. For $i\geq 1$, let $T_i=\starify(T_{i-1})$ and let $E_i$ be the edge set of $T_i$.
We need to show that $T_k$ is a star centered at $p$ for all $k\geq \lceil \log_2 n\rceil$.
To this end, we prove the following claim.

\begin{clm}\label{clm:log}
If $e'_s\in E_{i+1}$ is an outgoing edge of $s\in S$ and $e'_s$ is not incident to $p$,
then $s\in S_e$ for some edge $e\in E_i$ such that $\width(e)\geq 2\cdot\width(e'_s)$.
\end{clm}
\begin{proof}
By construction, if $e'_s$ is not incident to $p$, then it is an edge on the upper chain of some polygon $P_e$, $e\in E_i$, where $s\in S_e$.
By construction, $\width(e)$ equals the sum of widths of the edges in the upper chain of polygon $P_e$.
Since we do not use an edge of maximum width in this chain, we have
$\width(e'_s)\leq \frac{1}{2}\width(e)$ for every vertex $s\in S_e$, as claimed.
\end{proof}

Now suppose that $k\geq \lceil \log_2 n\rceil$ and $\width(e_1)\geq 1$ for some $e_1\in E_k$.
By Claim~\ref{clm:log}, there is a chain of edges $e_i\in E_{k+1-i}$ for $i=1,2,\ldots, k$
such that $\width(e_{i+1})\geq 2\cdot\width(e_i)$.
This implies that $\width(e_0)\geq 2^k$, where $2^k>n$,
which contradicts the fact that $\width(e)\leq n-2$ for any edge~$e$.
This proves that $\width(e)=0$ for all $e\in E_k$ if $k\geq \lceil \log_2 n\rceil$.
Consequently, $T_k$ is a star centered at $p$, as claimed.

\paragraph{Implementation of $\starify$ with four simultaneous rotations.}
We have seen that $\starify$ is a simultaneous compatible exchange operation.
However, it need not be a simultaneous rotation.
Indeed, consider an edge $e\in E$ where $S_e\neq\emptyset$.
Operation $\starify$ transforms every edge $e_s$, $s\in S_e$, into some edge $e'_s$ on the upper chain of the polygon $P_e$. This operation is not necessarily a simultaneous rotation: For example, if $e_s\neq e'_s$ and $e'_s$ is already present in $T$ (cf.~\figurename~\ref{fig:vertical}), then a simultaneous rotation cannot transform $e_s$ into $e'_s$ directly (recall that a simultaneous operation between $T_1$ and $T_2$ requires a bijection between $E_1\setminus E_2$ and $E_2\setminus E_1$). We now show that $\starify$ can be implemented by a sequence of up to four simultaneous rotations.

We define the four simultaneous rotations for the outgoing edges of each point set $S_e$ independently (in the total order on the edges in $E$ defined above). Suppose $e=(u,v)$, where $e=e_u$ (note that edge $e_v$ may lie on the boundary of $P_e$). Triangulate $P_e$ arbitrarily. We call a vertex in $S_e$ a \emph{peak} if it is incident to only one triangle, and \emph{nonpeak} otherwise. By definition two peak vertices in $S_e$ cannot be consecutive vertices of $P_e$. The dual graph of the triangulation of the polygon $P_e$ is a tree. By a BFS traversal of the dual graph (tree) starting from the triangle adjacent to $e$ (root), each triangle is adjacent to either edge $e$ or an edge of the previous (i.e., parent) triangle. Assign each triangle $\Delta_s$ to its vertex $s\in S_e$ that is an endpoint of neither $e$ nor a previous triangle. For every vertex $s\in S_e$, let $A_s$ denote the set of two edges of $\Delta_s$ incident to $s$, and by $b_s$ the edge of $\Delta_s$ opposite to $s$. Note that, for every peak vertex $s\in S_e$, both edges in $A_s$ lie on the boundary of $P_e$, and for every nonpeak vertex $s\in S_e$, at least one edge in $A_s$ is a diagonal of $P_e$.

\begin{figure}[htbp]
\centering
\includegraphics{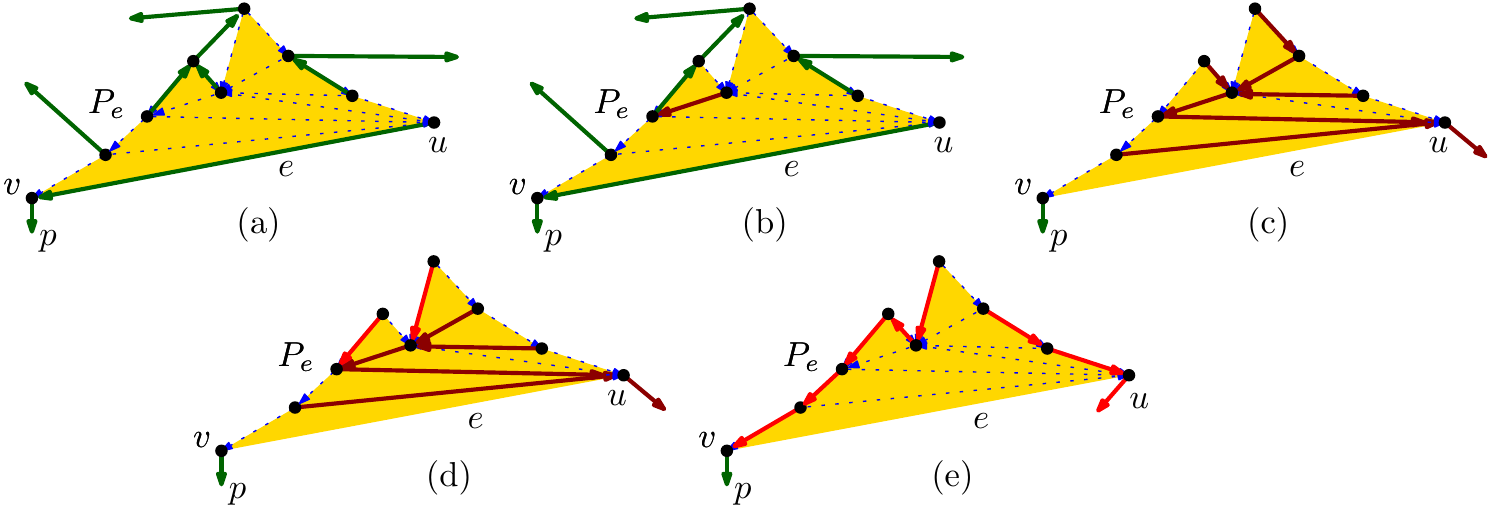}
\caption{(a) A triangulation of polygon $P_e$ and the edges in $E$ incident to the vertices of $P_e$.
(b--e) The result of the 1st, 2nd, 3rd, and 4th simultaneous rotation, respectively.
Arrows pointing away from $P_e$ end at vertices that are not shown in these figures.
}
\label{fig:simulation}
\end{figure}

We can now describe four simultaneous rotations for the vertices in $S_e$, $e\in E$ (see~\figurename~\ref{fig:simulation}).

\begin{enumerate}\itemsep 0pt
\item\label{step:1} For every vertex $s\in S$,
    \begin{enumerate}
    \item\label{step:1a} if the downward vertical ray from $s$ does not hit any edge in $T$, then rotate $e_s$ to $sp$, and
    \item\label{step:1b} if $s$ is a peak vertex in $P_e$, for some $e\in E$, such that both edges in $A_s$ are present in $T$ and directed to $s$, then let $s'$ be the (nonpeak) neighbor of $s$ along the boundary of $P_e$ such that $b_s\in A_{s'}$, and rotate $e_{s'}$ to $b_s$.
    \end{enumerate}
\item\label{step:2} For every vertex $s\in S_e$, $e\in E$, rotate the outgoing edge to an edge in $A_s$ that is (i) a diagonal of $P_e$ if $s$ is nonpeak, and (ii) not in the current tree if $s$ is a peak (break ties arbitrarily).
\item\label{step:3} For every peak vertex $s\in S_e$, $e\in E$, rotate $e_s$ to the edge prescribed by $\starify$.
\item\label{step:4} For every nonpeak vertex $s\in S_e$, rotate $e_s$ to an edge of $P_e$ prescribed by $\starify$.
\end{enumerate}

For the correctness of the four simultaneous rotations, we need to show that each operation produces a noncrossing directed spanning tree rooted at $p$. First we show that none of the operations rotates an edge to another edge of the current tree. Step~\eqref{step:1a} rotates edges to their positions prescribed by $\starify$. Steps~\eqref{step:1b}--\eqref{step:3} rotate edges $e_s$, $s\in S_e$, to an edge in $A_s$.
Note that the sets $A_s$ are pairwise disjoint, and each edge in $A_s$ lies on the boundary or in the interior of $P_e$.
By Claim~\ref{clm:noncrossing}, the edges in $E$ do not intersect the interior of any polygon $P_e$; the same holds for all edges created in Step~\eqref{step:1a}.
Consequently, Steps~\eqref{step:1b}-\eqref{step:2} do not rotate any edge to another edge.
At the end of Step~\eqref{step:3}, the outgoing edges of peak vertices in $S_e$, $e\in E$, are already at their final positions prescribed by $\starify$; the outgoing edges of nonpeak vertices are diagonals of $P_e$.
Therefore, Step~\eqref{step:4} does not rotate any edge to an existing edge, either.

Step~\eqref{step:1} clearly maintains a directed spanning tree rooted at $p$. At the end of Step~\eqref{step:2}, the outgoing edges of the vertices in $S_e$, $e\in E$, induce two forests rooted at the endpoints of $e$; and both Step~\eqref{step:3} and \eqref{step:4} maintain this property. Since each endpoint of $e$ is either adjacent to $p$ or is part of some set $S_{e'}$ with $e'\prec e$, this property implies a directed path from every vertex in $S\setminus \{p\}$ to $p$.
\end{proof}

\subsection{Convex Position}

For simultaneous rotations and point sets in convex position, the upper bound of 4 results from an algorithm for the weaker operation of simultaneous empty-triangle rotations. Below, we establish a lower bound of 3 for simultaneous rotations and $n\geq 6$ points in convex position.

\begin{propo}\label{pro:rotation-lowerbound}
For every set $S$ of $n\geq 6$ points in convex position, there exist two trees in $\mathcal{T}(S)$
such that it takes at least 3 simultaneous rotations to transform one into the other;
that is, $f_{\sro}^{\cx}(n)\geq 3$.
\end{propo}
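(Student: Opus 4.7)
I would exhibit an explicit pair of trees $T_1,T_2\in\mathcal{T}(S)$ on $n\geq 6$ points in convex position and show that their distance in $\mathcal{G}_{\sro}(S)$ is at least three. Label the points of $S$ cyclically as $1,2,\ldots,n$. The example should be chosen so that $E_1\setminus E_2$ and $E_2\setminus E_1$ contain several long chords that pairwise obstruct each other's compatibility conditions, and so that even after a single simultaneous rotation too many incompatibilities remain. A natural place to look is among pairs of trees anchored on opposite arcs of the hull, so that the symmetric difference is concentrated along an ``interior'' portion of the polygon.

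First, I would verify that one simultaneous rotation does not suffice. Since every pair of corresponding edges in $\sro$ must share a vertex, and since $T_1$ and $T_2$ are fixed, the bijection between $E_1\setminus E_2$ and $E_2\setminus E_1$ is highly constrained---often forced. The chosen example should be such that for every admissible bijection there is a pair $(e_1,e_2)$ in which $e_2$ crosses some edge of $T_1$ (equivalently, $(S,E_1\cup\{e_2\})$ is not noncrossing), violating the compatibility clause of simultaneous compatible exchange. This is a direct geometric check, using the convex-position characterization that chords $ab$ and $cd$ cross iff exactly one of $c,d$ lies strictly between $a$ and $b$ on the hull.

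The core of the proof is the second task: showing two simultaneous rotations are insufficient. I would argue by contradiction, assuming $T_1\to T'\to T_2$ for some intermediate $T'\in\mathcal{T}(S)$. The structural lever is the following observation, which is sharp in convex position: any edge $vw$ added in step 1 must not cross any edge of $T_1$, which forces $vw$ to be a ``short'' chord---one whose open arc on a distinguished side contains no vertex of $S$. Symmetrically, any edge $e\in E_2\setminus E(T')$ added in step 2 must not cross any edge of $T'$. Using both restrictions together, one sees that $T'$ must simultaneously be a one-step $\sro$-neighbor of $T_1$ and retain enough ``room'' for the edges of $E_2\setminus E(T')$ to be inserted compatibly. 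I would show that every candidate $T'$ reachable from $T_1$ still contains some edge crossing at least one required edge of $T_2\setminus E(T')$, yielding a contradiction.

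The main obstacle is the case analysis in this last step: enumerating the one-step $\sro$-neighbors of $T_1$ and ruling each out as a $\sro$-neighbor of $T_2$. This analysis is made tractable by the convex-position restriction noted above, which limits both the set of admissible ``new edges'' in step 1 and those in step 2 to a small family of short chords (in fact, to boundary-arc edges relative to specific vertices). The delicate part is arranging the example so that this finite case analysis closes without gaps, and so that it extends uniformly from $n=6$ to all $n\geq 6$---for smaller $n$ the geometry leaves enough slack for a two-step transformation, whereas for $n\geq 6$ the additional vertices create the obstructions that force a third rotation.
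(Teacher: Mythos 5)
Your write-up is a plan rather than a proof: the proposition is an existence statement, and the one thing that carries all the content---an explicit pair $T_1,T_2\in\mathcal{T}(S)$ together with a completed check that no two simultaneous rotations connect them---is exactly what you defer (``the delicate part is arranging the example so that this finite case analysis closes without gaps''). No candidate trees are specified, so neither the one-step nor the two-step impossibility is actually verified for anything. Moreover, the structural lever you propose to make the case analysis tractable is not a consequence of the definitions: an edge inserted by a simultaneous rotation must avoid crossing the edges of the \emph{current} tree, but whether that forces it to be a ``short'' chord with an empty arc depends entirely on what $T_1$ is (if $T_1$ is a path along the hull, every chord is admissible). So as stated that claim cannot anchor the argument; it would have to be proved for the specific example you never fix, and the uniform extension to all $n\geq 6$ is likewise only asserted.

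For comparison, the paper's proof avoids enumerating all one-step neighbors $T'$ of $T_1$ (your plan) by tracking a single edge. It fixes a concrete $6$-point example in which $T_1$ contains the long diagonal $ad$ (plus hull edges), and observes that after one simultaneous rotation this edge either stays put or becomes one of only four edges $ac,ae,bd,fd$; the target $T_2$ is chosen so that $ad$ and each of these candidates crosses some edge of $T_2$. Since a simultaneous rotation is in particular a simultaneous compatible exchange, reaching $T_2$ in the next step would require the union of the current edge set with $E_2$ to be noncrossing, which the surviving crossing edge forbids; hence at least two further operations are needed, giving distance at least $3$. For $n>6$ the same trees are padded with $n-6$ extra vertices on the arc between $a$ and $b$, and the identical argument applies. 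If you want to salvage your approach, you should commit to such an explicit example and replace the ``short chord'' heuristic with this kind of single-edge tracking, which is what makes the finite check short and gap-free.
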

\begin{proof}
For $n=6$, let $T_1$ and $T_2$ be the two trees shown in \figurename~\ref{fig:rotation-lowerbound}. Consider a sequence of simultaneous rotations that transform $T_1$ into $T_2$. The first simultaneous rotation either keeps $ad$ in place or rotates it to $ac$, $ae$, $bd$, or $fd$. In all cases, edge $ad$ or its image crosses some edge of $T_2$.
Consequently, at least two more simultaneous operations are needed to reach $T_2$.
For $n>6$, we may augment $T_1$ and $T_2$ with $n-6$ vertices between $a$ and $b$, and the same argument shows that the distance between the two trees is at least 3.
\end{proof}

\begin{figure}[htbp]
\centering
\includegraphics{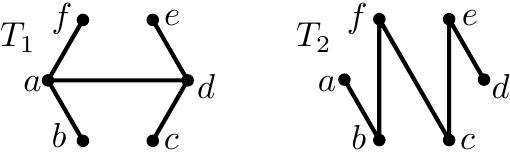}
\caption{Two spanning trees on 6 points in convex position.
}
\label{fig:rotation-lowerbound}
\end{figure}

\section{Empty-Triangle Rotation}
\label{sec:EmptyTriangleRotation}

\subsection{General Position}
\label{general and er}

For single operations, the lower bound of $\lfloor \frac{3n}{2}\rfloor-5$ follows from an analogous bound for the stronger operation of exchange. We prove an upper bound of $O(n \log n)$ (see Theorem~\ref{thm:empty_rotation_general}), which leaves a logarithmic gap. We start with an easy observation about a single triangle.

\begin{propo}\label{pro:replacing}
Let $T=(S,E)$ be a spanning tree with three vertices $p,q,r\in S$ such that $pq\in E$ and the interior of the triangle $\Delta(pqr)$ does not intersect any edge of $T$. Then an empty-triangle rotation can replace $pq$ with either $pr$ or $qr$.
\end{propo}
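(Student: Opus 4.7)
The idea is to identify, based on which side of $pq$ (in the tree-theoretic sense) the vertex $r$ lies, which of $pr$ or $qr$ is the correct replacement edge. Removing $pq$ from $T$ disconnects the tree into two subtrees $T_p\ni p$ and $T_q\ni q$. The vertex $r$ lies in exactly one of them; without loss of generality assume $r\in T_p$ (the other case yields $pr$ by the symmetric argument). I would then take $e=qr$: since $qr$ joins the two components of $T-pq$, the graph $T':=(S,\,(E\setminus\{pq\})\cup\{qr\})$ has $n-1$ edges and is connected, hence is a spanning tree.

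The crux is to verify that $T'$ is noncrossing, which reduces to showing that no edge $f\in E\setminus\{pq\}$ properly crosses the segment $qr$. Suppose for contradiction that some such $f$ meets $qr$ at a point $x$ interior to both segments. Since $x$ lies on the side $qr$ of $\Delta(pqr)$ and $f$ crosses $qr$ transversally (general position rules out tangential or vertex-incident contacts), a sufficiently short arc of $f$ on one side of $x$ is contained in the open interior of $\Delta(pqr)$. This contradicts the hypothesis that no edge of $T$ meets the interior of $\Delta(pqr)$. I expect this transversality observation to be the only nontrivial step.

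With noncrossing established, the remaining conditions for an empty-triangle rotation are routine. The rotation condition holds because $pq$ and $qr$ share the endpoint $q$. The empty-triangle condition requires that neither $T$ nor $T'$ has edges meeting the interior of $\Delta(pqr)$: for $T$ this is the hypothesis, and for $T'$ it follows because the edges of $T'$ are the edges of $E\setminus\{pq\}$ (none of which meet the interior, by hypothesis) together with $qr$, which lies on the boundary of the triangle. Note also that the case $qr\in E$ cannot arise here, because $qr\in E$ together with $pq\in E$ would force $r\in T_q$, opposite to our assumption; an analogous remark applies to $pr$ in the symmetric case. Everything beyond the transversality step is bookkeeping.
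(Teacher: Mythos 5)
Your proof is correct and takes essentially the same approach as the paper's: remove the bridge $pq$, determine which component contains $r$, and reconnect with $qr$ or $pr$ accordingly. The paper's version is terser, leaving the noncrossing and empty-triangle verifications implicit, whereas you spell out the transversality argument; both are sound.
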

\begin{proof}
Since $T$ is a tree, $pq$ is a bridge. The graph $T-pq$ is a forest of two trees in which $p$ and $q$ are in distinct components. We can exchange $pq$ for the edge $rq$ if $r$ is in the same component as $p$, otherwise we can exchange $pq$ for the edge $pr$, to obtain a noncrossing spanning tree. In either case, the exchange is an empty-triangle rotation.
\end{proof}

\begin{theorem}\label{thm:empty_rotation_general}
Every plane tree in $\mathcal{T}(S)$, $|S|=n$, can be transformed into any other tree in $\mathcal{T}(S)$ using $O(n\log n)$ empty-triangle rotations;
that is, $f_{\er}(n)=O(n\log n)$.
\end{theorem}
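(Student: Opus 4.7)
The plan is to show that every $T\in\mathcal{T}(S)$ can be transformed into the star centered at an extreme point $p$ using $O(n\log n)$ empty-triangle rotations; the triangle inequality then yields $f_{\er}(n)=O(n\log n)$ via this canonical tree.

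The strategy is to piggyback on the $\starify$ operation and the width-halving analysis of Theorem~\ref{thm:rotations}: $O(\log n)$ applications of $\starify$ carry any $T$ to the star at $p$, so it suffices to simulate a single $\starify$ operation by $O(n)$ single empty-triangle rotations. Since $\starify$ acts identically outside the pairwise interior-disjoint polygons $P_e$ (those with $S_e\neq\emptyset$), and $\sum_{e\in E}|S_e|\leq n-1$, the per-round bound reduces to showing that the effect of $\starify$ inside each such polygon can be realized by $O(|S_e|)$ empty-triangle rotations.

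Inside a fixed polygon $P_e$ with $|S_e|=k$, the key tool is Claim~\ref{clm:noncrossing}: the interior of $P_e$ contains no edge of~$T$, so any triangulation of $P_e$ has all its triangles empty with respect to~$T$. I would fix a triangulation of $P_e$ and process its triangles in a BFS order rooted at the triangle incident to $e$, in the spirit of the four-rotation simulation in the proof of Theorem~\ref{thm:rotations}. For each triangle I perform a constant number of empty-triangle rotations, each certified by Proposition~\ref{pro:replacing} because the rotation triangle lies in $P_e$ and is therefore empty. After every vertex $s\in S_e$ has been processed, all outgoing edges of the vertices of $S_e$ lie inside $P_e$, and a final round of at most $k$ empty-triangle flips along the triangulation brings the tree to the configuration prescribed by $\starify$ (with the maximum-width upper-chain edge absent), again with each flip justified by Proposition~\ref{pro:replacing}.

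The main obstacle is the very first rotation at each vertex $s\in S_e$: its initial outgoing edge $e_s=s t_s$ reaches a parent $t_s$ outside $P_e$, so the rotation triangle $\Delta(s,t_s,w)$ used to redirect it partially leaves $P_e$ and its emptiness is \emph{not} provided by Claim~\ref{clm:noncrossing}. I plan to handle this by choosing $w$ as a vertex of $P_e$ visible from $s$ through the exterior of $P_e$ and arguing inductively that the BFS order ensures such a choice exists with $\Delta(s,t_s,w)$ free of tree edges; the key inductive invariant is that previously performed rotations only added edges inside $P_e$, so the face of $T\cup\partial P_e$ containing $t_s$ remains bounded by the single upper-chain edge of $P_e$ incident to~$s$. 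Verifying this visibility/emptiness invariant is the technical heart of the proof. Stitching together $O(|S_e|)$ rotations per polygon, $O(n)$ per $\starify$ round, and $O(\log n)$ rounds gives the claimed $f_{\er}(n)=O(n\log n)$.
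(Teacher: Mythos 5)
Your overall accounting ($O(\log n)$ rounds of $\starify$, $O(n)$ rotations per round) would give the right bound, but the step you yourself flag as the technical heart is a genuine gap, and the invariant you propose to close it does not hold in general. An empty-triangle rotation replacing $e_s=st_s$ by $sw$ needs the entire triangle $\Delta(s,t_s,w)$ to be free of tree edges, and this triangle lies \emph{outside} $P_e$, in a region about which Claim~\ref{clm:noncrossing} says nothing: between the segment $st_s$ and the upper chain of $P_e$ there can be arbitrarily many points of $S$ (points whose vertical rays hit other edges of $T$), each carrying its own tree edges, so no vertex $w$ of $P_e$ with $\Delta(s,t_s,w)$ empty need exist, and your claimed invariant about the face of $T\cup\partial P_e$ containing $t_s$ fails. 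In such a configuration a single rotation cannot bring $e_s$ into $P_e$; the edge has to be walked through a sequence of empty triangles, which can cost $\Theta(n)$ rotations for one vertex and destroys the $O(|S_e|)$ per-polygon budget. A second obstruction is that Proposition~\ref{pro:replacing} does not let you choose which of the two candidate edges ($pr$ or $qr$) replaces $pq$ --- the choice is forced by which side preserves connectivity --- so during such a walk you cannot guarantee that the moving edge stays incident to $s$ and converges to the prescribed target $e'_s$. This is precisely why the four-step simulation in the proof of Theorem~\ref{thm:rotations} is stated only for (simultaneous) rotations, which carry no emptiness condition.

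For comparison, the paper's proof does not simulate $\starify$ at all: it picks a ray $h$ from the extreme point $p$ splitting $S$ into two halves of size at most $n/2$, triangulates the current tree (an arbitrary completion to a maximal planar graph), and repeatedly applies Proposition~\ref{pro:replacing} to the tree edge whose crossing with $h$ is farthest from $p$, replacing it by an edge of the last triangulation face met by $h$. Each rotation strictly decreases the number of faces crossed by $h$ up to the last tree crossing, so $O(n)$ rotations clear the ray, and recursing on the two sides gives $a(n)\leq 2a(n/2+1)+O(n)=O(n\log n)$. Note that this argument is indifferent to which endpoint of the rotated edge is kept --- exactly the freedom your plan lacks. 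If you want to salvage your route, you would need an amortized argument of this flavor (walking edges through faces of a triangulation of the whole current configuration, with progress measured by something insensitive to which endpoint survives), rather than a constant number of empty-triangle rotations per vertex of $S_e$.
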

\begin{proof}
Let $T$ be a spanning tree in $\mathcal{T}(S)$ for a point set $S$ of size $n$ and let $p \in S$ be an extremal point in $S$.
We show that we can transform $T$ into a star centered at $p$ using $O(n \log n)$ empty-triangle rotations.
To this end, we use $O(n)$ operations to transform $T$ into two subtrees of roughly equal size whose convex
hulls intersect in $p$ only, and then recurse on the subtrees.

Let $h$ be a ray emanating from $p$ that subdivides the convex hull of $S$ into two parts,
neither containing more than $n/2$ points of $S\setminus \{p\}$.
If $h$ does not cross any edge of $T$, we can recurse on the two subtrees.
Otherwise, let $e$ be the edge of $T$ whose crossing with $h$ is farthest away from~$p$.
Triangulate $T$ (i.e., augment $T$ into an edge-maximal planar straight-line graph).
By Euler's polyhedron formula, the triangulation has at most $2n-5$ bounded faces.
Let $(\Delta_1, \dots, \Delta_m)$ be the sequence of bounded faces (triangles) of the triangulation
that intersect the line segment $(p,h\cap e)$, in the order in which they are visited by $h$.
Note that $m \leq 2n-5$. By Proposition~\ref{pro:replacing}, an empty-triangle rotation can
replace $e$ with some other edge $f$ of $\Delta_m$. See \figurename~\ref{fig:Recursive_ER}. This edge $f$
either does not cross $h$, or its crossing $h\cap f$ is closer to $p$ than $h\cap e$ is.
In both cases, we obtain a tree $T'\in \mathcal{T}(S)$ whose edge set is contained in the same triangulation;
however, the sequence of triangles visited by~$h$ until the last crossing with an edge in $T'$ is now $(\Delta_1, \dots, \Delta_{m'})$ for some $m' < m$. Consequently, after at most $m \leq 2n-5$ iterations, $h$ does not cross any edge of the tree,
and we can recurse on the two subtrees, each on at most $n/2 + 1$ vertices.

\begin{figure}[htbp]
\centering
\includegraphics{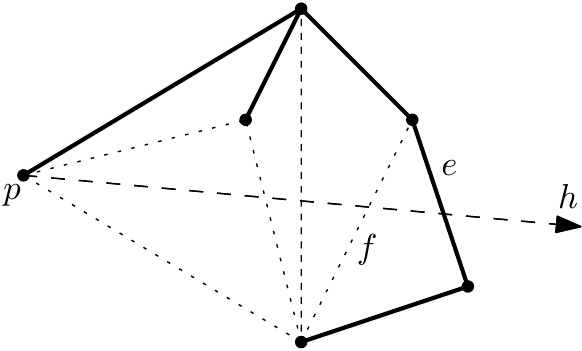}
\caption{An empty-triangle rotation can replace $e$ with $f$.}
\label{fig:Recursive_ER}
\end{figure}

When every subtree contains only two vertices, then all edges are incident to $p$, and their union is a star centered at $p$. The number $a(n)$ of operations needed to transform $T$ into a star centered at $p$ satisfies the recurrence relation $a(n) \leq 2a(n/2 +1)+O(n)$, which solves to $O(n \log n)$. Since any two trees in $\mathcal{T}(S)$ can be transformed into a star centered at $p$ using $a(n)$ operations, we have
$f_{\er}(n)\leq 2a(n)=O(n\log n)$.
\end{proof}

A simultaneous empty-triangle rotation between trees $T_1,T_2\in \mathcal{T}$ requires a bijection between the edges in $E_1\setminus E_2$ and $E_2\setminus E_1$ such that the corresponding edges are adjacent and form empty triangles. The empty triangles involved in such an operation are interior-disjoint. We prove an upper bound of $f_{\ser}(n)\leq O(n)$ in Theorem~\ref{thm:sim_empty_upper}, building on the proof of Theorem~\ref{thm:empty_rotation_general}, and a lower bound of $f_{\ser}(n)\geq O(\log n)$ in Theorem~\ref{thm:sim_empty_lower}.

\begin{theorem}\label{thm:sim_empty_upper}
Every plane tree in $\mathcal{T}(S)$, $|S|=n$, can be transformed into any other tree in $\mathcal{T}(S)$ using less than $8n$ simultaneous empty-triangle rotations; that is, $f_{\ser}(n) < 8n$.
\end{theorem}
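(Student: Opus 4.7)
The plan is to replay the recursive star-forming algorithm from the proof of Theorem~\ref{thm:empty_rotation_general}, but to batch rotations across the recursion so that the two recursive subproblems proceed \emph{in parallel}. Fix an extremal point $p\in S$ and a ray $h$ from $p$ that subdivides $\conv(S)$ so that each side contains at most $n/2$ points of $S\setminus\{p\}$. As in the proof of Theorem~\ref{thm:empty_rotation_general}, triangulate $T$ and, one by one, rotate the tree edge whose crossing with $h$ is farthest from $p$; by Proposition~\ref{pro:replacing} each such rotation is an empty-triangle rotation, and at most $m\leq 2n-5$ of them suffice to clear $h$. Count each as a simultaneous empty-triangle rotation involving a single pair. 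Once $h$ is cleared, $T$ splits into two subtrees $T_L, T_R$, each on at most $\lceil n/2\rceil+1$ vertices and lying in one of the two closed halfplanes bounded by $h$.

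Now recurse on $T_L$ and $T_R$. The key observation is that every empty triangle used by the recursion rooted at $T_L$ lies in the left closed halfplane (by induction on depth: every subsequent splitting ray emanates from $p$ and stays inside the current subregion, and every triangulation used lies inside that subregion), and likewise for $T_R$. Therefore a rotation on the $T_L$ side and a rotation on the $T_R$ side involve interior-disjoint empty triangles, so they can be combined into a single simultaneous empty-triangle rotation. Extending this across the full recursion tree, the subproblems active at any given round correspond to convex subregions with pairwise disjoint interiors, so all of their current rotations can be executed as one simultaneous empty-triangle rotation. Consequently, the number of simultaneous rotations needed for the subtree of the recursion tree rooted at a node equals the \emph{maximum}, not the sum, of the two children's counts.

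Let $a(n)$ denote the maximum, over all $T\in\mathcal{T}(S)$ with $|S|=n$, of the number of simultaneous empty-triangle rotations that this algorithm uses to reach the star centered at $p$. The parallel-composition step yields the recurrence
\[
a(n)\ \leq\ (2n-5)\ +\ a\!\left(\lceil n/2\rceil+1\right),
\]
with the trivial base case $a(n)=0$ for $n\leq 2$. Unrolling, using $n_{k+1}\leq n_k/2+1$ and $n_k\leq n/2^k+2$, gives a geometric sum bounded by $4n$ minus a positive term, so in particular $a(n)<4n$ for all $n$. Since every two trees in $\mathcal{T}(S)$ can be joined via the star centered at $p$ and all rotations are reversible, we obtain $f_{\ser}(n)\leq 2a(n)<8n$.

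The main point that requires care is the interior-disjointness claim supporting the parallel composition: one must verify, by induction on the recursion depth, that the empty triangle used by each rotation at arbitrary depth is contained in the convex subregion carved out for its subproblem by the ancestor rays. This follows because each ``clearing'' step only uses triangles from a triangulation of the current subregion, and every subsequent ray emanates from $p$ and stays inside that subregion, so the subregions form a strictly nested hierarchy down the recursion tree, and rotations in distinct active subproblems are automatically compatible.
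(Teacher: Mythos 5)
Your proposal is correct and is essentially the paper's own proof: you parallelize the recursion of Theorem~\ref{thm:empty_rotation_general}, observing that rotations belonging to distinct recursive calls use interior-disjoint triangles and distinct edges and can therefore be merged into one simultaneous empty-triangle rotation, which yields the same recurrence $a(n)\leq 2n-5+a(n/2+1)$, hence $a(n)<4n$ and $f_{\ser}(n)\leq 2a(n)<8n$. (One small point: use the subproblem size bound $n/2+1$ rather than $\ceil{n/2}+1$ throughout---as you in fact do when unrolling---since each side of the ray contains at most $\floor{(n-1)/2}$ or $\ceil{(n-1)/2}\leq n/2$ points of $S\setminus\{p\}$ plus $p$; this keeps the geometric sum strictly below $4n$.)
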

\begin{proof}
Revisit the proof of Theorem~\ref{thm:empty_rotation_general}. It takes at most $2n-5$ empty-triangle rotations to split the initial tree into two subtrees, each of size at most $n/2 + 1$.
However, the set of triangles involved in the recursive calls are interior-disjoint, and involve distinct edges, so these rotations can be performed simultaneously. The number of rotations to obtain a star is therefore bounded by the recursion $a(n) \leq 2n-5+a(n/2 +1)$,
which solves to $a(n) < 4n$. Consequently, $f_{\ser}(n)\leq 2a(n)<8n$.
\end{proof}

\begin{theorem}\label{thm:sim_empty_lower}
For every $n\geq 2$, there exist a set $S$ of $n$ points in general position and two trees in $\mathcal{T}(S)$
such that $\Omega(\log n)$ simultaneous empty-triangle rotations are required to transform one into the other;
that is, $f_{\ser}(n)=\Omega(\log n)$.
\end{theorem}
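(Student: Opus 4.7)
The plan is to construct, for each $n$ of the form $2^k$ (and extend to general $n$ by padding with a few additional points that do not decrease the diameter), a point set $S$ in general position and two trees $T_1,T_2\in\mathcal{T}(S)$ whose simultaneous empty-triangle rotation distance is at least $\log_2 n$. The core of the argument will be a potential-function halving estimate, mirroring the doubling argument used in the upper-bound proof of Theorem~\ref{thm:rotations}.

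First I would fix an extremal point $p\in S$ and project so that $p$ lies at $y=-\infty$, exactly as in the $\starify$ construction of Section~\ref{sec:Rotation}. Relative to $p$, the width $\width(e)$ of an edge $e$ is the number of points in its vertical $x$-slab. I would arrange the remaining $n-1$ points in a carefully chosen configuration (for instance, in convex position below $p$) and pick $T_1,T_2\in\mathcal{T}(S)$ so that $T_1$ contains an edge of width $\Omega(n)$ while every edge of $T_2$ has width $O(1)$. A natural candidate is to take $T_2$ as the star centered at $p$ (so every edge of $T_2$ has width $0$) and $T_1$ as a tree containing a long near-horizontal chord that forces width $\Omega(n)$.

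Next I would introduce the potential $\phi(T)=\max_{e\in E(T)}\width(e)$ and prove the key halving lemma: for every $T\in\mathcal{T}(S)$ and every $T'$ obtained from $T$ by a single simultaneous empty-triangle rotation,
\[
\phi(T')\;\leq\;2\,\phi(T).
\]
For a single pair $(e_1,e_2)$ in the rotation, sharing a vertex $u$ and forming an empty triangle $\Delta(u,v,w)$ with $e_1=uv$ and $e_2=uw$, one has $W(e_2)\subseteq W(e_1)\cup W(vw)$; emptiness of the triangle forces $\width(vw)\leq\width(e_1)+O(1)$, giving the bound pointwise. The pairwise interior-disjointness of the empty triangles---already observed in the proof of Theorem~\ref{thm:sim_empty_upper}---then prevents different pairs in a single simultaneous operation from compounding the local doubling estimate. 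Combining $\phi(T_1)=\Omega(n)$, $\phi(T_2)=O(1)$, and the halving lemma, transforming $T_1$ into $T_2$ requires at least $\log_2\bigl(\phi(T_1)/\phi(T_2)\bigr)=\Omega(\log n)$ simultaneous empty-triangle rotations, as claimed.

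The hard part will be the halving lemma: the empty-triangle constraint on a single pair restricts only a local region, but in a simultaneous operation many empty triangles sit across the whole configuration, and one must rule out pathological bijections in which the new edges inherit width from edges of $T$ that are not adjacent to them. Handling this requires combining interior-disjointness with the specific geometric layout of $S$. If the raw maximum-width potential does not close cleanly, the natural fallback is to replace $\phi$ by a layered or summed variant tailored to a hierarchical construction of $S$, $T_1$, $T_2$---for example, a nested sequence of convex layers giving a fractal-like configuration in which any single simultaneous empty-triangle rotation can resolve only one level of the hierarchy.
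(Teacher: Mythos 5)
Your plan hinges on the ``halving lemma'' $\phi(T')\leq 2\phi(T)+O(1)$ for $\phi(T)=\max_{e\in E(T)}\width(e)$, and this is exactly where the argument breaks: the empty-triangle condition constrains only the \emph{interior} of $\Delta(uvw)$, not the points of $S$ that lie in the vertical slab of $vw$ but below (or above) the triangle. So from $W(e_2)\subseteq W(e_1)\cup W(vw)$ you cannot conclude $\width(vw)\leq\width(e_1)+O(1)$ in general: take $u,v$ with consecutive $x$-coordinates and $w$ far to the right, with all intermediate points far below the segment $vw$; then $\Delta(uvw)$ is empty of points and can be kept empty of edges of both trees, $\width(uv)=0$, yet $\width(uw)=\Omega(n)$. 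A single empty-triangle rotation can therefore jump the width potential from $O(1)$ to $\Omega(n)$, and the claimed $\Omega(\log n)$ bound evaporates unless the point set is engineered to forbid such wide empty triangles. Your proposed concrete layouts do not do this; in fact the ``remaining points in convex position'' suggestion cannot possibly work, since for points in convex position the simultaneous empty-triangle rotation diameter is at most $4$ (Theorem~\ref{thm:sim_empty_upper_convex}). There is also a smaller issue of direction/normalization: with $T_2$ a star at $p$ you have $\phi(T_2)=0$, so the conclusion $\log_2(\phi(T_1)/\phi(T_2))$ is ill-posed as written, though reversibility of the operation and an additive constant in the lemma would repair that part.

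What is missing is precisely the content of the paper's proof: a hierarchical point set in which the geometric claim you need is actually true. The paper places the points at heights $n^{2(k-j(x))}$ governed by the lowest set bit of $x$, defines levels $S_0\subset S_1\subset\cdots\subset S_k$, and proves (Claim~\ref{clm:3rd}) that any empty triangle with two vertices in $S_i$ has its third vertex in $S_{i+1}$ --- the huge separation between consecutive levels forces a point of the next level into any triangle that tries to span farther. With that claim, an edge spanned by $S_i$ can only be carried (under the bijection of a simultaneous operation) to an edge spanned by $S_{i+1}$, and tracing the image of the bottom edge $pq$ from the star at $p$ to the star at a top-level point $r$ yields the $\Omega(\log n)$ bound; equivalently one may run your potential idea with ``level'' in place of ``width.'' Your fallback sentence about ``a layered or summed variant tailored to a hierarchical construction'' points in the right direction, but that construction and the verification of its empty-triangle property are the entire substance of the lower bound, and they are not supplied in your proposal.
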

\begin{proof}
We may assume that $n=2^k+1$ for some $k\in \mathbb{N}$. We construct a point set $S$ and two spanning trees $T,T'\in \mathcal{T}(S)$ such that it takes at least $k=\log_2(n-1)$ simultaneous empty-triangle rotations to transform $T$ into $T'$. The points in $S$ have integer coordinates, and are not in general position, but a random perturbation by a small $\varepsilon>0$ would bring $S$ to general position and preserve all combinatorial properties in our proof.

Our point set is $S=\{(x,\varphi(x)): x=0,\ldots, n\}$, where we define $\varphi(x):\{0,\ldots ,n\}\rightarrow \mathbb{N}_0$ as follows (see \figurename~\ref{fig:binary}). Every integer $x\in \{0,\ldots , 2^k=n-1\}$ has a binary representation $x=\sum_{i=0}^k x_i2^i$ with $x_i\in \{0,1\}$. For $x=1,\ldots, n-2$, let $j(x)$ be the smallest index such that $x_{j(x)}=1$, and let $\varphi(x)=n^{2(k-j(x))}$;
and for $x\in \{0,n-1\}$, let $\varphi(0)=\varphi(n-1)=1$. This completes the definition of $S$.
Let $T$ and $T'$, respectively, be stars centered at $p=(0,1)$ and $r=(1,\varphi(1))=(1,n^{2k})$.

\begin{figure}[htbp]
\centering
\includegraphics[width=0.9\textwidth]{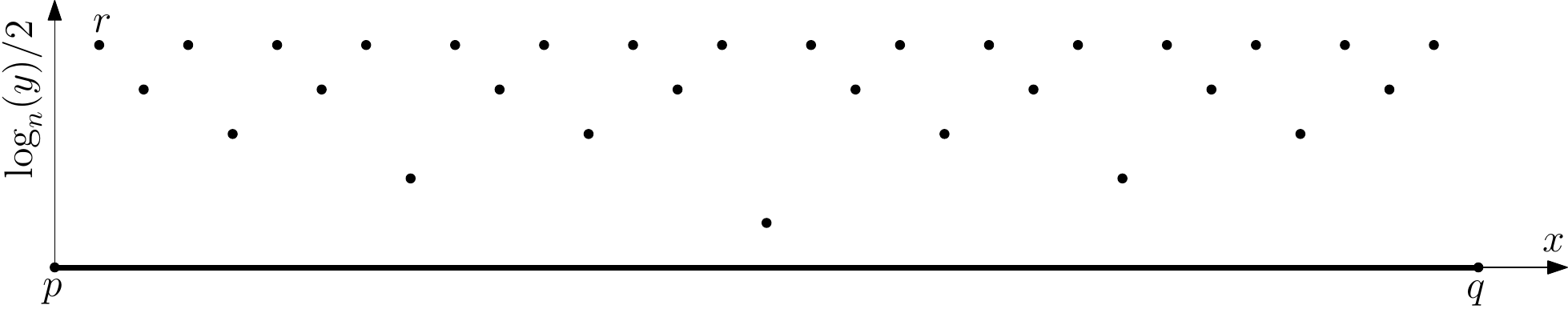}
\caption{A schematic image of the point set $S$ for $n=2^5+1=33$ and $k=5$, on a logarithmic scale.
The edge $pq$ is horizontal and vertex $r=(1,n^{2k})=(1,33^{10})$ has maximal $y$-coordinate.
(The logarithmic scale distorts the slopes.) }
\label{fig:binary}
\end{figure}

Let $(T=T_0,T_1,\ldots , T_m=T')$ be a sequence of trees in $\mathcal{T}(S)$ such that any two consecutive trees are related by a simultaneous empty-triangle rotation. For every simultaneous operation, there exists a bijection between the old and new edges such that corresponding edges are related by an empty-triangle rotation. For all $i=0,\ldots, m-1$, fix such a bijection for the simultaneous empty-triangle rotation between $T_i$ and $T_{i+1}$, and extend it to a bijection between all edges of $T_i$ and $T_{i+1}$ with identity relation on the edges that are present in both trees. Then every edge in $T$ corresponds to a unique edge in $T_i$ ($i=0,\ldots , m$): It corresponds to an edge incident to $p$ in $T_0$ and to an edge incident to $r$ in $T_m$. In the remainder of the proof, we trace the edges  corresponding to $e=pq$, where $q=(n,1)$, and show that it takes at least $k$ empty-triangle rotations to carry $e$ into an edge incident to $r$, consequently $m\geq k$.

For $i=0,1,\ldots ,k-1$, denote by $S_i$ the set of points in $S$ whose $y$-coordinate is at most $n^{2i}$, that is, $S_i=\{(a,\varphi(a))\in S: \varphi(a)\leq n^{2i}\}$.
We make use of the following claim.
\begin{clm}\label{clm:3rd}
If an empty triangle spanned by $S$ has two vertices in $S_i$ ($i=0,\ldots ,k-1$), then the third vertex must be in $S_{i+1}$.
\end{clm}
To prove this claim, we make a few observations about points in $S$ and the slopes of line segments spanned by $S$.
By construction, for any two points $a,b\in S_i$ ($i=0,\ldots ,k-1$), there is a point $d\in S_{i+1}\setminus S_i$, whose $x$-coordinate is between that of $a$ and $b$.
The slope of a segment between points $a=(x_a,y_a)$ and $b=(x_b,y_b)$ is defined as $\slope(ab)=(y_a-y_b)/(x_a-x_b)$.
In particular, for any two points $a,b\in S_i$, we have $|\slope(ab)|\leq n^{2i}$.
For $a\in S_i$ and $b\in S_{i+1}\setminus S_i$, we have
\begin{equation}\label{eq:slope1}
\frac{n^{2i+1}}{2}<\frac{n^{2i+2}-n^{2i}}{n}<|\slope(ab)|< n^{2i+2}.
\end{equation}
For $a\in S_i$ and $b\in S\setminus S_{i+1}$, we have
\begin{equation}\label{eq:slope2}
\frac{n^{2i+3}}{2}<\frac{n^{2i+4}-n^{2i}}{n}<|\slope(ab)|.
\end{equation}

We are now ready to prove Claim~\ref{clm:3rd}.
\begin{proof}[Proof of Claim~\ref{clm:3rd}.]
Consider an empty triangle $\Delta(abc)$ with $a,b\in S_i$, $x_a<x_b$, and $c\in S\setminus S_{i+1}$.
Then there exists a point $d\in S_{i+1}\setminus S_i$ such that $x_a<x_d<x_b$.
Since $d$ lies above both $a$ and $b$, we have
$\slope(bd)<\slope(ab)<\slope(ad)$.
Inequalities~\eqref{eq:slope1} and \eqref{eq:slope2} yield
\begin{equation}\label{eq:slope3}
|\slope(ab)|<|\slope(ad)|<|\slope(ac)|
\hspace{.1in} \mbox{\rm and} \hspace{.1in}
|\slope(ab)|<|\slope(bd)|<|\slope(bc)|
\end{equation}
If $x_a<x_c<x_b$, then \eqref{eq:slope3} readily implies that
$\slope(ab)<\slope(ad)<\slope(ac)$ and $\slope(bc)<\slope(bd)<\slope(ab)$;
consequently $d$ lies in the interior of $\Delta(abc)$.
If $x_b<x_c$ (resp., $x_c<x_a$), then \eqref{eq:slope3} implies
$\slope(ab)<\slope(ad)<\slope(ac)$
(resp., $\slope(bc)<\slope(bd)<\slope(ab)$),
and so $d$ lies again in the interior of $\Delta(abc)$.
In all cases, triangle $\Delta(abc)$ is nonempty, contrarily to our assumption.
This completes the proof of Claim~\ref{clm:3rd}.
\end{proof}

It follows from Claim~\ref{clm:3rd} that a simultaneous empty-triangle rotation transforms every edge spanned by $S_i$ into an edge spanned by $S_{i+1}$, for $i=0,\ldots , k-1$.
In particular, the edge $e=pq$ is spanned by $S_0$, and the point $r$ is in $S_k\setminus S_{k-1}$.
Consequently, it takes at least $k$
empty-triangle rotations to transform edge $e$ into an edge incident to $r$, as claimed.
\end{proof}

\subsection{Convex Position}
\label{convex and er}

A construction in~\cite{HERNANDO1999} designed for the stronger exchange operation yields the lower bound $\lfloor \frac{3n}{2}\rfloor -5$ for single empty-triangle rotations for point sets in convex position.
Similarly, we can derive an upper bound of $f_{\er}^{\cx}(n)\leq2n-5$ from our algorithm for edge slides (Theorem~\ref{thm:slide_upper_convex}).
In Theorem \ref{thm:sim_empty_upper_convex} below, we provide a constant upper bound for simultaneous empty-triangle rotations.

\begin{figure}[htbp]
\centering
\includegraphics{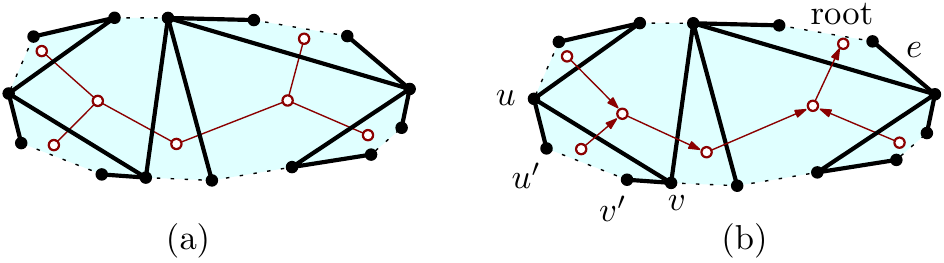}
\caption{(a) A plane tree on a set $S$ of 14 points in convex position, and the dual tree on 7 cells.
(b) An edge $e$ of $\conv(S)$ lies on the boundary of a unique cell, which is the root of the dual tree.}
\label{fig:dualtree}
\end{figure}

We define the \emph{dual tree} of a plane tree $T\in \mathcal{T}(S)$ for a set $S$ of $n\geq 3$ points in convex position as follows; see~\figurename~\ref{fig:dualtree}(a).
The edges of $T$ subdivide the convex $n$-gon $\conv(S)$ into one or more convex \emph{cells}, which correspond to the nodes of the dual tree.
Two nodes of the dual tree are adjacent if the corresponding cells share an edge.
Note that the dual tree is indeed a tree (every edge corresponds to a chord of $\conv(S)$, and so is a bridge).
Furthermore, the boundary of each cell contains precisely one edge that is not in $T$, and this edge is necessarily an edge of $\conv(S)$; we call this edge the \emph{hull edge} of the cell.
The main idea of the proof of the following theorem is to rotate edges shared by cells to hull edges.

\begin{theorem}\label{thm:sim_empty_upper_convex}
For every set $S$ of $n$ points in convex position, every plane tree in $\mathcal{T}(S)$ can be transformed into any other tree in $\mathcal{T}(S)$ using at most $4$ simultaneous empty-triangle rotations; that is, $f_{\ser}^{\cx}(n)\leq 4$.
\end{theorem}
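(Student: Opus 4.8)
The plan is to single out one fixed ``canonical'' tree $P^{*}\in\mathcal{T}(S)$ and show that \emph{every} tree in $\mathcal{T}(S)$ can be carried to $P^{*}$ by at most two simultaneous empty-triangle rotations; concatenating a length-$\le 2$ sequence from $T_{1}$ to $P^{*}$ with the reverse of a length-$\le 2$ sequence from $T_{2}$ to $P^{*}$ then transforms $T_{1}$ into $T_{2}$ in at most $4$ steps, giving $f_{\ser}^{\cx}(n)\le 4$. For $P^{*}$ I take a \emph{hull path}, i.e., the path using all edges of $\conv(S)$ except one designated hull edge $e^{*}$. When the statement is applied to a concrete pair $T_{1},T_{2}$, one chooses $e^{*}$ to be a hull edge lying in neither $E(T_{1})$ nor $E(T_{2})$; such an edge exists unless one of $T_{1},T_{2}$ is already a hull path, a degenerate situation that is treated separately.

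The engine is a local description of the operation valid in convex position: since no point of $S$ lies in the interior of any triangle spanned by $S$, an empty-triangle rotation on a tree $T$ is exactly the move ``choose a cell $C$ of $T$ (see \figurename~\ref{fig:dualtree}), choose three vertices $p,q,r$ of $C$ with $pq\in E(T)$ on the $C$-side of $pq$, and replace $pq$ by $pr$''. This automatically yields a member of $\mathcal{T}(S)$, because the closed cell $\bar C$ contains no edge of $T$ in its interior and the only points of $S$ in $\bar C$ are the vertices of $C$, so $pr$ neither crosses $T$ nor disconnects it. Recall that the tree edges on the boundary of a cell $C$ form a path $w_{0}w_{1}\cdots w_{k}$ joining the two endpoints $w_{0},w_{k}$ of the unique hull edge of $C$; deleting $w_{i}w_{i+1}$ separates $\{w_{0},\dots,w_{i}\}$ from $\{w_{i+1},\dots,w_{k}\}$ in $T$, so $w_{i}w_{i+1}$ may be rotated about $w_{i}$ to any $w_{i}w_{j}$ with $j>i+1$ (or about $w_{i+1}$ to any $w_{\ell}w_{i+1}$ with $\ell<i$). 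Finally, a set of such moves forms one \emph{simultaneous} empty-triangle rotation as soon as the triangles involved are pairwise interior-disjoint; moves in distinct cells are automatically disjoint, and inside a single cell the triangles we shall use are disjoint by construction.

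Using these tools, here is the two-step transformation of $T$ into $P^{*}$, assuming $e^{*}\notin E(T)$. Root the dual tree of $T$ at the cell $C_{0}$ whose hull edge is $e^{*}$. Every chord of $T$ (non-hull tree edge) is the parent chord of a unique non-root cell; let that cell \emph{own} the chord. The first simultaneous rotation acts inside each cell: it rotates the chord owned by that cell --- and, if the cell has several children, reshapes the cell further --- so that in the resulting tree $T'$ every non-hull edge, together with one hull edge and one further vertex, bounds an empty triangle of $T'$; informally, $T'$ is obtained from $P^{*}$ by ``pushing out an ear'' at each non-hull edge. (In the simplest case, a single chord $f$ whose child cell $C'$ has hull edge $e_{C'}$ gets rotated inside $\bar{C'}$ to a chord that cuts off a triangle having $e_{C'}$ on its boundary.) The second simultaneous rotation then flattens each of these ear-triangles onto its hull edge, one move per cell of $T'$, turning $T'$ into $P^{*}$. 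Since the triangles of each step lie in distinct cells of the current tree, each step is a legitimate simultaneous empty-triangle rotation, and reversibility plus the concatenation above yields the theorem.

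The main obstacle is organizing the first step so that it is genuinely \emph{one} simultaneous operation producing the right $T'$: a chord shared by two cells must be removed and rotated by exactly the cell that owns it, while the other cell is reshaped consistently with that chord already being gone; the chosen rotation targets must be pairwise compatible and the overall result must still be a tree; and, crucially, one must verify that after this single step every remaining non-hull edge can be flattened simultaneously, which requires understanding the cell structure of $T'$ and checking that the flattening triangles of the second step are pairwise interior-disjoint. The degenerate cases --- $e^{*}$ already an edge of $T$, and $T_{1},T_{2}$ both hull paths --- are routine but must be handled separately (e.g.\ by rerooting, or by a direct bounded-length argument between two hull paths).
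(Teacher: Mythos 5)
Your plan is the one the paper uses (canonical hull path, dual tree rooted at the cell of the absent hull edge, per-cell rotation of the parent chord toward that cell's hull edge in two simultaneous rounds), but as written it has a genuine gap in two places. First, the reduction to the case $e^{*}\notin E(T_1)\cup E(T_2)$ rests on a false claim: it is not true that a hull edge missing from both trees exists unless one of $T_1,T_2$ is a hull path. For a hexagon $v_1\ldots v_6$ take $T_1=\{v_1v_3,\,v_2v_3,\,v_4v_5,\,v_5v_6,\,v_6v_1\}$ and $T_2=\{v_2v_4,\,v_1v_2,\,v_3v_4,\,v_5v_6,\,v_6v_1\}$: both are noncrossing spanning trees, each contains a chord (so neither is a hull path), yet their sets of missing hull edges, $\{v_1v_2,v_3v_4\}$ and $\{v_2v_3,v_4v_5\}$, are disjoint, so no admissible $e^{*}$ exists. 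Hence your ``degenerate case'' is much broader than the two situations you list, and you give no argument for it. The paper avoids this case split altogether: it fixes the path $P$ (and its absent hull edge $e$) independently of the input tree, and when $e\in E(T)$ it simply moves $e$, inside the root cell, to that cell's hull edge using at most two rotations confined to the root cell, so the same two simultaneous rounds work for every $T$. You would need either this idea or a correct treatment of all pairs with no common missing hull edge.

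Second, what you label ``the main obstacle'' is in fact the substance of the proof, and it is left unproved. Moreover, the extra ``reshaping'' you propose for cells with several children is both unspecified and unnecessary: each cell needs to handle only its own parent chord (the chords to its children are those children's parent chords and are handled there). The verification the paper gives is short: for a non-root cell with parent chord $uv$ and hull edge $u'v'$, the quadrilateral $\conv(\{u,v,u',v'\})$ lies in the closed cell, hence its interior meets no tree edge and, by convex position, contains no point of $S$; therefore $uv\to uv'$ followed by $uv'\to u'v'$ (or a single rotation if the two edges share a vertex) are empty-triangle rotations whose triangles stay inside that cell, and since every cell moves only its own parent chord, the triangles used in each round are pairwise interior-disjoint across cells, making each round one legitimate simultaneous empty-triangle rotation and yielding $P$ after the second round. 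Without this (or an equivalent) per-cell argument, your proposal is an outline of the paper's proof rather than a proof.
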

\begin{proof}
Let $S$ be a set of $n\geq 3$ points in convex position, and let $P\in \mathcal{T}(S)$ be a path of $n-1$ arbitrary edges of $\conv(S)$.
We claim that every tree $T\in \mathcal{T}(S)$ can be transformed into $P$ using at most 2 simultaneous empty-triangle rotations. This immediately implies $\diam(G_{\ser}^{\cx}(S))\leq 4$.

To prove the claim, let $T\in \mathcal{T}(S)$ be an arbitrary tree.
Denote by $e$ the edge of $\conv(S)$ that is not in $P$.
The edge $e$ is on the boundary of a unique cell determined by $T$; let this cell be the root of the dual tree and direct all the edges of the dual tree toward the root (see \figurename~\ref{fig:dualtree}(b) for an example).
The boundary of any other cell $C$ contains a unique edge $uv$ that separates it from its parent cell, and it has a unique hull edge $u'v'$ not in $T$.
Since $u$, $v$, $u'$, and $v'$ lie on the boundary of the cell, which is convex, then $\conv(\{u,v,u',v'\})$ is empty.
If $uv$ and $u'v'$ do not share any vertex, then we can use two consecutive empty-triangle
rotations to move $uv$ to $uv'$, and then $uv'$ to $u'v'$.
If $uv$ and $u'v'$ share a vertex,
then a single empty-triangle rotation can move $uv$ to $u'v'$.
Also, if edge $e$ is present in $T$, we can move it to the hull edge of the root cell using at most two operations.
Rotations involving different cells of $T$ can be performed simultaneously.
Consequently, we can transform $T$ into $P$ with two operations, as claimed.
\end{proof}

\section{Edge Slide}
\label{sec:EdgeSlide}

We remark here that for simultaneous edge slides one can consider the following, more restricted variant of simultaneous edge slides. Let $T\in \mathcal{T}(S)$ for a point set $S$ in general position. Two edge slide operations that move $v_1 u_1$ to $v_1 w_1$ and $v_2 u_2$ to $v_2 w_2$, respectively, can be performed simultaneously if the triangles $\Delta(u_1 v_1 w_1)$ and $\Delta(u_2 v_2 w_2)$ intersect in at most one point.
All lower bounds in this section hold for the less restrictive setting (in which $\Delta(u_1 v_1 w_1)$ and $\Delta(u_2 v_2 w_2)$ may share the edge $u_1w_1=u_2w_2$), and the upper bounds apply to the more restricted setting that does not allow a shared edge.

\subsection{General Position}
\label{sec:edgeslide_general}

As noted above, Aichholzer and Reinhardt \cite{aichholzer2007quadratic} proved that $f_{\es}(n)=\Theta(n^2)$.
Little is known about the simultaneous variant.
However, their results immediately imply $f_{\ses}(n)=O(n^2)$.
A lower bound of $f_{\ses}(n)=\Omega(n)$ can also be derived from $f_{\es}(n)=\Omega(n^2)$ if we notice
that a simultaneous edge slide operation can be simulated by a sequence of single edge slide operations
(this property does not hold for the other four operations in this paper).

\begin{lemma}\label{lem:simulate}
Let $T_1,T_2\in \mathcal{S}$ be two trees related by a simultaneous edge slide operation with a bijection $\pi:E_1\setminus E_2\rightarrow E_2\setminus E_1$ such that for every edge $e\in E_1\setminus E_2$, the edges $e$ and $\pi(e)$ are adjacent, and the third edge of the triangle formed by $e$ and $\pi(e)$ is in $E_1\cap E_2$.
Then a sequence of (single) edge slide operations can successively replace every edge $e\in E_1\setminus E_2$ with $\pi(e)$.
\end{lemma}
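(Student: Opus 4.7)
The plan is to process the edges of $E_1\setminus E_2$ one by one in an arbitrary order, arguing inductively that each individual slide is valid. Enumerate $E_1\setminus E_2=\{e_1,\ldots,e_k\}$, and write $e_i=v_iu_i$ and $\pi(e_i)=v_iw_i$, so that $\Delta_i:=\Delta(u_iv_iw_i)$ is the triangle associated with the $i$-th slide, whose third side $u_iw_i$ lies in $E_1\cap E_2$ by assumption. By the definition of simultaneous edge slide, each $\Delta_i$ has empty interior with respect to $T_1$, and the triangles $\Delta_1,\ldots,\Delta_k$ have pairwise disjoint interiors. Set $T^{(0)}=T_1$, and for $1\leq i\leq k$ let $T^{(i)}$ be obtained from $T^{(i-1)}$ by deleting $e_i$ and inserting $\pi(e_i)$. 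The goal is to prove, by induction on $i$, that $T^{(i)}\in\mathcal{T}(S)$ and that the transition $T^{(i-1)}\to T^{(i)}$ is a single edge slide; since $\pi$ is a bijection, $T^{(k)}=T_2$, and the lemma follows.

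The heart of the induction is to show that the interior of $\Delta_i$ is disjoint from every edge of $T^{(i-1)}$. Partition the edges of $T^{(i-1)}$ into three classes: (i) edges of $E_1\cap E_2$; (ii) unprocessed old edges $e_j$ with $j\geq i$; and (iii) already-inserted new edges $\pi(e_j)$ with $j<i$. Edges of types (i) and (ii) lie in $T_1$, so they avoid the interior of $\Delta_i$ because $\Delta_i$ is empty in $T_1$. Each edge of type (iii) lies on the boundary of $\Delta_j$, and since $\Delta_j$ and $\Delta_i$ have disjoint interiors, $\pi(e_j)$ cannot meet the interior of $\Delta_i$ (a transversal crossing at an interior point of both segments would place pieces of both triangle interiors in every sector of a small neighborhood of the crossing, contradicting interior-disjointness). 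Together with the fact that $u_iw_i\in E_1\cap E_2\subseteq T^{(i-1)}$, this supplies every requirement of a single edge slide in $T^{(i-1)}$ from $e_i$ to $\pi(e_i)$: adjacency, emptiness of the triangle, and presence of the sliding edge $u_iw_i$.

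It remains to verify $T^{(i)}\in\mathcal{T}(S)$. The tree property is built into the definition of edge slide: deleting $e_i$ splits $T^{(i-1)}$ into two components, and because $u_iw_i$ is still present, $u_i$ and $w_i$ lie in the same component, so inserting $v_iw_i$ reconnects $v_i$'s component to that of $u_i$. Non-crossingness of $T^{(i)}$ follows from the same interior-disjointness argument: the new edge $\pi(e_i)$ lies on the boundary of $\Delta_i$, and any edge of $T^{(i-1)}\setminus\{e_i\}$ that would cross $\pi(e_i)$ would have to enter the interior of $\Delta_i$, which has just been shown to be empty. The only real obstacle in the argument is the small geometric fact that boundary segments of two interior-disjoint triangles cannot cross each other; once that is invoked, the induction is mechanical.
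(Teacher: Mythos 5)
Your proof is correct, and it reaches the same conclusion by a somewhat different route than the paper. The paper proves a subset version: for \emph{every} $\widehat{E}\subseteq E_1\setminus E_2$, the graph $(S,(E_1\setminus\widehat{E})\cup\pi(\widehat{E}))$ is in $\mathcal{T}(S)$. There, noncrossingness is a one-liner, because every intermediate edge lies in $E_1\cup E_2$, which is noncrossing for any simultaneous compatible exchange (hence for simultaneous edge slides); connectivity is shown by the same trick you use, namely that if an old edge $uv$ straddled two components, the surviving third edge $vw\in E_1\cap E_2$ would force $\pi(uv)=uw$ to straddle them too. Your step-by-step induction instead routes both the emptiness of $\Delta_i$ in the current tree and the noncrossingness of $T^{(i)}$ through the pairwise interior-disjointness of the triangles $\Delta_1,\ldots,\Delta_k$. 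Two remarks on that. First, interior-disjointness is not literally ``by the definition of simultaneous edge slide'': the definition only imposes the per-pair conditions on $T_1$ and $T_2$. It does follow (and the paper asserts it without proof in the empty-triangle-rotation section), since each $\Delta_i$ has interior disjoint from all edges of $E_1\cup E_2$ while the sides of every $\Delta_j$ are edges of $E_1\cup E_2$, so overlapping interiors would force a side of one triangle into the interior of the other (or force $\Delta_i=\Delta_j$, impossible as $e_i$ is a side of no other slide triangle); a sentence to this effect should be added. Second, for your type (iii) edges you could bypass interior-disjointness entirely: $\pi(e_j)\in E_2$, and $\Delta_i$ is empty with respect to the edges of $T_2$ as well, which is the simpler observation the paper exploits. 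Your version buys a more explicit verification that each intermediate transition is a genuine single edge slide (triangle empty in the current tree, sliding edge present), at the cost of a slightly heavier geometric argument; the paper's subset formulation buys order-independence of the sequence directly, which is exactly what is later used for Proposition~\ref{pro:sim-slide} and Theorem~\ref{thm:labeled-edgeslide-cx-lower}, though your ``arbitrary enumeration'' gives the same.
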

\begin{proof}
Note that each pair $(e,\pi(e))\in (E_1\setminus E_2)\times(E_2\setminus E_1)$ satisfies the geometric condition of the edge slide operation. Therefore, it remains to prove that for every subset $\widehat{E}\subset E_1\setminus E_2$, the replacement of the edges in $\widehat{E}$ with their images $\pi(\widehat{E})=\{\pi(e):e\in \widehat{E}\}$ produces a noncrossing straight-line spanning tree, that is, the graph $\widehat{G}=(S,(E_1\setminus \widehat{E})\cup \pi(\widehat{E}))$ is in $\mathcal{T}(S)$.
Note first that $E_1\cup E_2$ are pairwise noncrossing, and so $\widehat{G}$ is a noncrossing straight-line graph.
By construction, $\widehat{G}$ has $n-1$ edges, so it remains to show that $\widehat{G}$ is connected.
Suppose, to the contrary, that $\widehat{G}$ is disconnected, with connected components $\widehat{G}_1,\ldots, \widehat{G}_k$ for some $k\geq 2$. Since $T_1$ is a spanning tree, there is an edge $e\in \widehat{E}$ that joins two different components, say $\widehat{G}_1$ and $\widehat{G}_2$, where $1\leq i<j\leq k$.
Assume $e=uv$ and $\pi(e)=uw$, and without loss of generality, $u$ is in $\widehat{G}_i$ and $v$ is in $\widehat{G}_j$. By assumption, $vw\in E_1\cap E_2$, hence it is an edge in $\hat{G}$. Consequently, $w$ is in $\widehat{G}_j$, and so $\pi(e)$ connects two distinct components of $\widehat{G}$. Since $\pi(e)\in \pi(\widehat{E})$, then $\pi(e)$ is an edge of $\widehat{G}$, which contradicts the assumption that $\widehat{G}$ is disconnected.
\end{proof}

\begin{propo}\label{pro:sim-slide}
For every $n\geq 3$, there exist a set $S$ of $n$ points in general position and two trees in $\mathcal{T}(S)$
such that $\Omega(n)$ simultaneous edge slides are required to transform one into the other;
that is, $f_{\ses}(n)=\Omega(n)$.
\end{propo}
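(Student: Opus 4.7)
The plan is to derive the lower bound by combining Lemma~\ref{lem:simulate} with the quadratic lower bound of Aichholzer and Reinhardt~\cite{aichholzer2007quadratic} for single edge slides. The key observation is that a simultaneous edge slide between two trees $T_1,T_2\in\mathcal{T}(S)$ involves a bijection between $E_1\setminus E_2$ and $E_2\setminus E_1$, and this symmetric difference has cardinality at most $n-1$.

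More precisely, I would proceed as follows. First, invoke the Aichholzer--Reinhardt construction to obtain, for every $n\geq 3$, a set $S$ of $n$ points in general position and two trees $T,T'\in \mathcal{T}(S)$ whose distance in $\mathcal{G}_{\es}(S)$ is $\Omega(n^2)$. Second, consider any shortest sequence $(T=T_0,T_1,\ldots,T_d=T')$ of simultaneous edge slides realizing the distance $d=\diam_{\ses}(T,T')$ in $\mathcal{G}_{\ses}(S)$. Third, apply Lemma~\ref{lem:simulate} to each consecutive pair $(T_{i-1},T_i)$: since $|E_{i-1}\setminus E_i|\leq n-1$, the simultaneous operation between $T_{i-1}$ and $T_i$ can be replaced by at most $n-1$ single edge slides that pass through trees in $\mathcal{T}(S)$. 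Concatenating these simulations yields a sequence of at most $d(n-1)$ single edge slides transforming $T$ into $T'$.

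Hence the single-edge-slide distance between $T$ and $T'$ is at most $d(n-1)$, which combined with the lower bound $\Omega(n^2)$ gives $d(n-1)\geq \Omega(n^2)$, i.e., $d=\Omega(n)$. This establishes $f_{\ses}(n)=\Omega(n)$.

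I expect no serious obstacle: the two ingredients (the quadratic lower bound for single edge slides and the simulation lemma) do all the work. The only point that requires a moment of care is verifying that Lemma~\ref{lem:simulate} applies to \emph{any} simultaneous edge slide (so that the simulation step is valid for each $T_{i-1}\to T_i$), which is immediate since the lemma's hypotheses are precisely the definition of a simultaneous edge slide operation as given earlier in the paper. Note that the same argument does not go through for the other four elementary operations, matching the remark in the paper that this simulation property is special to edge slides.
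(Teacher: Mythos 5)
Your proposal is correct and follows essentially the same route as the paper: invoke the Aichholzer--Reinhardt $\Omega(n^2)$ lower bound for single edge slides, sequentialize each simultaneous operation via Lemma~\ref{lem:simulate}, and divide. The only difference is that you bound the number of edges moved per simultaneous operation by the trivial $|E_1\setminus E_2|\leq n-1$, whereas the paper sharpens this to $2\lfloor (n-1)/3\rfloor$ by noting that at most two edges can slide along any fixed edge; this affects only the constant, not the $\Omega(n)$ conclusion.
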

\begin{proof}
Aichholzer and Reinhardt~\cite{aichholzer2007quadratic} constructed a set $S$ of $n\geq 3$ points
in general position and two trees $T_1,T_2\in \mathcal{T}(S)$ such that $\Omega(n^2)$ simultaneous edge slides
are required to transform $T_1$ into $T_2$.
Consider a sequence of $m$ simultaneous edge slides that transforms $T_1$ into $T_2$.
If an edge $e_1=pq$ slides into $e_2=pr$, then edge $qr$ must be present before and after the operation,
and at most two edges can slide along $qr$ simultaneously (at most one on each side of $qr$).
Overall at most $2\lfloor \frac{n-1}{3}\rfloor$ edges can slide simultaneously.
By Lemma~\ref{lem:simulate}, we can perform the edge slides in each simultaneous operation sequentially, and obtain a sequence of at most $2m\cdot \lfloor \frac{n-1}{3}\rfloor$ edge slides that transform $T_1$ into $T_2$. The lower bound $\Omega(n^2)$ yields $m\geq \Omega(n)$, as claimed.
\end{proof}

\subsection{Convex Position}
\label{edgeslide:convex}

For single edge slide operations, the lower bound $\lfloor \frac{3n}{2}\rfloor-5$ follows from the corresponding bound for stronger operations.
Theorem~\ref{thm:slide_upper_convex} below, building on Lemmas~\ref{Lemma: P to P'} and~\ref{lem:sum},
provides a linear upper bound.

\begin{lemma} \label{Lemma: P to P'}
Given a set $S$ of $n\geq 3$ points in convex position and two paths $P_1$ and $P_2$
that each consist of edges of $\conv(S)$, we can transform $P_1$ into $P_2$ using $n-2$ edge slides.
\end{lemma}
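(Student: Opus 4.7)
Label the hull vertices $v_1,\dots,v_n$ in cyclic order around $\conv(S)$ and write $e_i=(v_i,v_{i+1})$ for the $i$-th hull edge (indices mod $n$). Since each of $P_1,P_2$ consists of hull edges, each omits exactly one: $P_1$ omits some $e_a$ and $P_2$ omits some $e_b$. Set $k\equiv b-a\pmod n$ with $1\le k\le n-1$ (the case $k=0$ is trivial). The plan is to track a single ``moving'' edge of the tree, starting as $e_b$ and ending as $e_a$, and to walk it around the polygon in two phases, with a total of $(k-1)+(n-k-1)=n-2$ slides.

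\emph{Phase A} will consist of $k-1$ slides, all pivoting at $v_{b+1}$. For $j=1,\dots,k-1$, the plan is to slide $(v_{b-j+1},v_{b+1})\to(v_{b-j},v_{b+1})$ through the triangle $\Delta(v_{b-j},v_{b-j+1},v_{b+1})$. The first slide converts hull edge $e_b$ into a diagonal, and later slides rotate that diagonal toward $v_{a+1}$. The third side of each triangle is the hull edge $e_{b-j}$, which lies in the current tree because $b-j\notin\{a,b\}$ throughout this range. Phase A ends with the moving edge equal to $(v_{a+1},v_{b+1})$.

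\emph{Phase B} will consist of $n-k-1$ slides, all pivoting at $v_{a+1}$. For $i=1,\dots,n-k-1$, the plan is to slide $(v_{a+1},v_{b+i})\to(v_{a+1},v_{b+i+1})$; the third side is the hull edge $e_{b+i}$, which lies in the tree since $b+i\notin\{a,b\}$ in this range. The last slide ($i=n-k-1$) produces $(v_{a+1},v_a)=e_a$, so the final tree is $P_2$. In the boundary cases $k=1$ or $k=n-1$, one of the phases is empty, but the count $n-2$ still holds.

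The main obstacle will be verifying that each triangle in this schedule is empty of current tree edges, so that every step is a legal empty-triangle rotation (and hence, since its third side is in the tree, an edge slide). The key invariant to carry along is that at every intermediate step the tree consists of hull edges (with only $e_a$ and $e_b$ missing) together with at most one diagonal, and that lone diagonal is always itself a side of the triangle about to be used. Convex position then does the rest: no hull vertex lies in the interior of a triangle spanned by three other hull vertices, and a hull edge lies on the boundary of $\conv(S)$ and so is disjoint from the interior of every chord and triangle. These two observations close the emptiness check, making each slide legal and yielding the $n-2$ bound.
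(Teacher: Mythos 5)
Your proposal is correct and matches the paper's proof: both track the single edge in $P_1\setminus P_2$ and walk it around the polygon in two phases of $k-1$ and $n-k-1$ slides, first pivoting at an endpoint of the starting edge and then at an endpoint of the target edge, for a total of $n-2$ slides. The only difference is cosmetic (your labeling and your explicit verification of the emptiness/invariant, which the paper leaves implicit).
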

\begin{proof}
Note that $P_{1}$ and $P_{2}$ differ at most by a single edge pair. Label the vertices clockwise from $v_1$ to $v_n$, such that $v_1v_n$ is an edge in $P_1$, but not an edge in $P_2$, and let $v_kv_{k+1}$ be the edge in $P_2$ that is not an edge in $P_1$.

Starting with $P_1$, we can use a sequence of $k-1$ successive slides moving $v_{i-1}v_n$ to $v_iv_n$, for $i=2,3,\ldots , k$, effectively replacing $v_1v_n$ with $v_kv_n$. Similarly, a sequence of $n-k-1$ edge slides moving $v_kv_{j+1}$ to $v_kv_{j}$, for $j=n-1,n-2,\ldots, k+1$, replaces $v_kv_n$ with $v_kv_{k+1}$. The concatenation of these two sequences transforms $P_1$ into $P_2$ using $(k-1)+(n-k-1)=n-2$ edge slides. See \figurename~\ref{fig:Path_To_Path} for an example.
\end{proof}

\begin{figure}[htbp]
\centering
\includegraphics{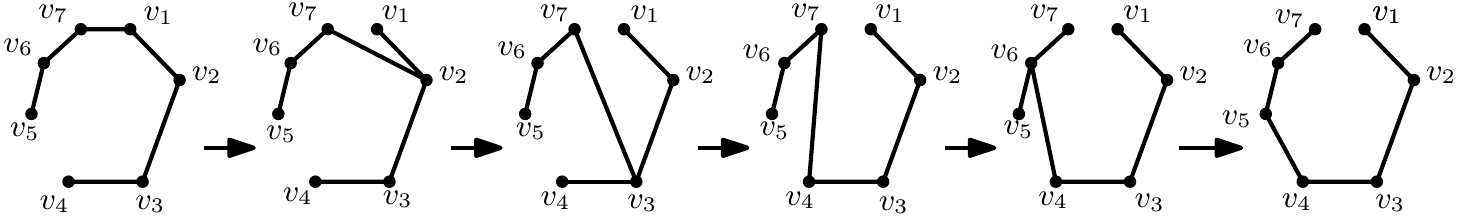}
\caption{An example for $n=7$ and $k=4$, where path $P_1$ is transformed into path $P_2$ by applying $n-2=5$ edge slides.}
\label{fig:Path_To_Path}
\end{figure}

\begin{lemma}\label{lem:sum}
Let $S$ be a set of $n\geq 3$ points in convex position and let $T\in \mathcal{T}(S)$. Assume that the dual tree of $T$ has $t$ cells $C_0,\ldots , C_{t-1}$, where $n_i$ is the number of points of $S$ on the boundary of $C_i$ for $i=0,\ldots, t-1$. Then
\begin{equation}\label{eq:sum}
n-2=\sum_{i=0}^{t-1}(n_i-2).
\end{equation}
\end{lemma}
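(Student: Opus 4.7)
My plan is to reduce Lemma~\ref{lem:sum} to the well-known fact that every triangulation of a convex $m$-gon contains exactly $m-2$ triangles. The key observation is that, because $S$ is in convex position, each cell $C_i$ of the subdivision of $\conv(S)$ induced by $T$ is itself a convex polygon whose vertex set is the $n_i$ boundary points in $S$.

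First I would verify this convexity claim. Every edge bounding $C_i$ is either a chord of $\conv(S)$ (an edge of $T$ separating $C_i$ from an adjacent cell) or an edge of $\partial\conv(S)$; in either case it is a line segment between two points of $S$, and $C_i$ lies entirely on one side of the supporting line. Hence $C_i$ is the intersection of a finite family of half-planes contained in $\conv(S)$, so it is a convex polygon, and its vertices are exactly the $n_i$ points of $S$ on its boundary.

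Next I would triangulate each $C_i$ independently using only diagonals among its own boundary vertices. Since $C_i$ is a convex $n_i$-gon, this uses precisely $n_i - 2$ triangles. The union of all these per-cell triangulations, together with the edges of $T$, constitutes a triangulation of $\conv(S)$: the triangles are pairwise interior-disjoint (because the cells are), their union covers $\conv(S)$, and every triangle vertex lies in $S$. Since $\conv(S)$ is a convex $n$-gon, any triangulation of it contains exactly $n - 2$ triangles. Counting the resulting triangulation in two ways yields
\[
n-2 \;=\; \sum_{i=0}^{t-1}(n_i - 2),
\]
which is the claim.

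There is essentially no serious obstacle: the proof is a single double-counting argument applied to a triangulation of $\conv(S)$ obtained by refining the cell decomposition. The only subtlety is ensuring that all cells are convex polygons with vertices in $S$, and this follows immediately from $S$ being in convex position. An alternative route through Euler's formula, charging each edge of $T \cup \partial\conv(S)$ to its incident cells and computing $t = n - h$ where $h$ is the number of hull edges of $\conv(S)$ in $T$, also works but is notably less clean than the triangulation argument.
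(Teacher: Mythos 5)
Your argument is correct, but it follows a genuinely different route from the paper. The paper's proof is a one-line counting argument on the dual tree: root it at $C_0$, observe that every non-root cell shares precisely two points of $S$ with its parent (the endpoints of the edge of $T$ separating them, by general position), and count the new points encountered in a BFS, giving $n=n_0+\sum_{i=1}^{t-1}(n_i-2)$. You instead triangulate each cell and double-count triangles: since $S$ is in convex position each cell $C_i$ is a convex $n_i$-gon with corners in $S$, its triangulation has $n_i-2$ triangles, and gluing these along the (untouched) edges of $T$ yields a triangulation of the convex $n$-gon, which has $n-2$ triangles. Both proofs are sound; the paper's buys brevity and reuses the rooted dual-tree machinery that is already in play in Theorems~\ref{thm:sim_empty_upper_convex} and~\ref{thm:slide_upper_convex}, while yours is self-contained and reduces the identity to the classical polygon-triangulation count, at the cost of the convexity discussion and of one point you should state explicitly: every point of $S$ is an endpoint of some edge of $T$ and hence a corner of some cell, and by general position no point of $S$ lies in the relative interior of a cell edge or diagonal, so the glued complex really is a triangulation of the $n$-gon using all $n$ points as vertices (this is what makes the count exactly $n-2$).
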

\begin{proof}
Without loss of generality, let $C_0$ be the root of the dual tree. Then every cell $C_i$, $i>0$, shares precisely two vertices with its parent. Counting the number of new points in $S$ encountered by the BFS, we obtain  $n=n_0+\sum_{i=1}^{t-1}(n_i-2)$, which yields~\eqref{eq:sum}.
\end{proof}

\begin{theorem}\label{thm:slide_upper_convex}
For every set $S$ of $n$ points in convex position, every plane tree in $\mathcal{T}(S)$ can be transformed into any other tree in $\mathcal{T}(S)$ using at most $2n-5$ edge slides;
that is, $f_{\es}^{\cx}(n) \leq 2n-5$.
\end{theorem}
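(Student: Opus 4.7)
I would transform $T$ into $T'$ through an intermediate hull path, combining Lemma~\ref{lem:sum} with a case analysis on whether $T$ and $T'$ share a missing hull edge. The main subroutine I would establish is a \emph{retirement} procedure: for any $T\in\mathcal{T}(S)$ and any designated cell $C_r$ of $T$, the tree $T$ can be transformed into the hull path $P^*_r$ whose missing edge is the missing hull edge of $C_r$, using at most $n-n_r$ edge slides.

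To retire, root the dual tree of $T$ at $C_r$ and process the remaining cells in breadth-first order. When cell $C_i$ of size $n_i$ is processed, slide the chord $e_i$ separating $C_i$ from its parent one endpoint at a time along the boundary of $C_i$ until $e_i$ coincides with $C_i$'s missing hull edge. Because the missing hull edge is absent from the current tree, both chord endpoints walk along a path $w_1,\ldots,w_{n_i}$ of length $n_i-1$ toward the two endpoints of the missing edge; the two possible endpoint-to-endpoint assignments have walk-lengths summing to $2(n_i-1)$, and since the chord endpoints are distinct at least one assignment uses at most $n_i-2$ slides. Processing in BFS order is essential: at the moment $C_i$ is reached, no earlier retirement has touched a boundary edge of $C_i$, so the supporting third edge of each slide is present in the current tree and the supporting triangle lies inside $C_i$, hence is empty of other tree edges. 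Summing over non-root cells and invoking Lemma~\ref{lem:sum} yields a total of $\sum_{i\neq r}(n_i-2) = n-n_r \leq n-3$, since every cell has $n_r\geq 3$ vertices.

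The theorem then follows by two cases. \emph{Case A:} if some hull edge $e^*$ is missing from both $T$ and $T'$, let $C_r$ and $C_{r'}$ be the corresponding cells of $T$ and $T'$; retirement applied to each (with the second reversed) composes into a sequence $T\to P^*\to T'$ of at most $(n-3)+(n-3)=2n-6\leq 2n-5$ edge slides. \emph{Case B:} otherwise every hull edge lies in $T\cup T'$; pick any missing hull edge $e$ of $T$ (which must then lie in $T'$) and perform one edge slide on $T'$ displacing $e$, producing $T''$ in which $e$ is missing. This slide is possible because at least one endpoint of $e$ must have another incident edge in $T'$ (otherwise $n=2$). Now $T$ and $T''$ share the missing hull edge $e$, so Case~A gives $T\to T''$ in at most $2n-6$ slides; reversing the displacement adds one slide, for a total of at most $2n-5$.

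The main obstacle will be the retirement subroutine. One must verify rigorously that each intermediate slide inside a cell is a genuine edge slide (the supporting third edge is present, the supporting triangle is empty of tree edges, and the new chord does not cross any other edge of $T$), together with the combinatorial claim that BFS order preserves the boundary of $C_i$ until $C_i$ itself comes up for retirement. A secondary subtlety is confirming that the single edge slide used in Case~B always exists and produces a valid $T''\in\mathcal{T}(S)$ having $e$ as the missing hull edge of some cell.
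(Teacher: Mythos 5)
Your proposal is correct and takes essentially the same route as the paper: both arguments transform the two trees into a common hull path by sliding each non-root cell's parent chord to that cell's missing hull edge at a cost of $n_i-2$ slides per cell (the paper packages this step as Lemma~\ref{Lemma: P to P'} and merges children into a growing root cell rather than using an explicit BFS order, which is only a cosmetic difference), then sums to $n-n_r\leq n-3$ via Lemma~\ref{lem:sum}, and spends one extra slide when the chosen missing hull edge is present in the other tree. The resulting bound $2n-5$ and all the bookkeeping match the paper's proof.
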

\begin{proof}
We show that any two plane spanning trees $T_1,T_2\in \mathcal{T}(S)$ on a set $S$ of $n\geq 3$ points in convex position can be transformed into the same path $P$ using a sequence of at most $n-3$ and $n-2$ edge slides,
respectively.

Consider the dual tree of $T_1$ and choose any cell $C_0$ of the dual tree to be the root.
Denote by $e_0$ the hull edge of $C_0$ (i.e., the edge of $C_0$ that is not in $T_1$), and let
$P\in \mathcal{T}(S)$ be the path formed by the remaining $n-1$ edges of $\conv(S)$.
If the dual tree has only one node, then $T_1=P$.
Otherwise, denote the remaining cells by $C_1,\ldots ,C_{t-1}$ in a BFS traversal of the dual tree.
For $i=0,\ldots ,t-1$, let $n_i$ be the number of points of $S$ on the boundary of $C_i$.

While the dual tree has two or more nodes, we merge $C_0$ with one of its children as follows.
Let $C_i$ be a child of $C_0$ in the dual tree and let $e_i$ be the edge shared by $C_0$ and $C_i$.
Note that $C_i$ induces a path in $T_1$ that consists of edges of the cell $C_i$.
We can apply Lemma~\ref{Lemma: P to P'} for the $n_i$ points on the boundary of $C_i$
to transform $e_i$ into the hull edge of cell $C_i$ using $n_i-2$ edge slides.
As a result, cells $C_0$ and $C_i$ merge to one cell. We let this cell be the new root cell. See \figurename~\ref{fig:Convex_es} for an example.

\begin{figure}[htbp]
\centering
\includegraphics{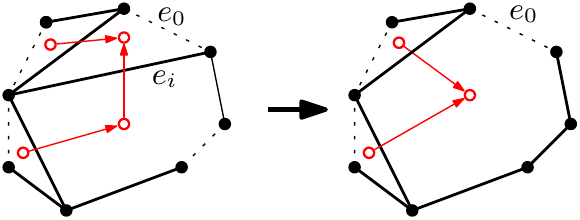}
\caption{One can merge the two cells which share edge $e_i$ by using 3 edge slides.}
\label{fig:Convex_es}
\end{figure}

When the while loop terminates, all cells $C_1,\ldots ,C_{t-1}$ have been merged into the root.
At this time, the dual tree has only one node, and the tree has been transformed into $P$.
By Lemmas~\ref{Lemma: P to P'} and~\ref{lem:sum}, we have used $\sum_{i=1}^{t-1}(n_i-2)=n-n_0\leq n-3$ edge slides.

If edge $e_0$ is absent from $T_2$, we can transform $T_2$ into $P$ as described above using $n-3$ edge slides.
However, if $e_0$ is an edge of $T_2$, we first apply an edge slide to replace $e_0$ with some other edge,
followed by a sequence of $n-3$ edge slides to obtain $P$.
The total number of operations is at most $2n-5$, as claimed.
\end{proof}

Now, let us consider simultaneous edge slides. We start with an easy lower bound.

\begin{theorem}\label{thm:sim_slide_lower_convex}
For every $n\geq 3$, there exist two trees in $\mathcal{T}(S)$, where $S$ is a set of $n$ points in convex position, that require $\Omega(\log n)$ simultaneous edge slides to transform one into the other;
that is, $f_{\ses}^{\cx}(n)=\Omega(\log n)$.
\end{theorem}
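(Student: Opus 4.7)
Label the points of $S$ as $v_1,\ldots,v_n$ in convex position, and take $T_1$ to be the star centered at $v_1$ and $T_2$ the star centered at $v_n$; then $\deg_{T_1}(v_n)=1$ while $\deg_{T_2}(v_n)=n-1$. The theorem will follow from a key lemma asserting that, under a single simultaneous edge slide (in the less restrictive setting), the degree of any fixed vertex $w$ grows by at most a factor of $3$.

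\paragraph{Key lemma.} To prove $\deg_{T'}(w)\leq 3\deg_T(w)$ when $T'$ is obtained from $T$ by one simultaneous slide, classify the slides involving $w$ into three types: (i) $w$ is the pivot; (ii) $w$ is the non-pivot endpoint of the deleted edge; (iii) $w$ is the non-pivot endpoint of the inserted edge. Let $n_1,n_2,n_3$ denote their counts and set $d=\deg_T(w)$; then $\deg_{T'}(w)=d+n_3-n_2$. Only slides of types (ii) and (iii) have an intermediate edge of the form $qw\in E_T\cap E_{T'}$. The crucial geometric fact is that, in convex position, two triangles sharing an edge $qw$ whose third vertices lie on the same side of the chord $qw$ must overlap in interior (a quick coordinate check). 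In the less restrictive setting, the triangles of any two simultaneous slides share at most an edge, so at most two slides share any one intermediate edge (one per side of the chord). Since the intermediate edges incident to $w$ number at most $d-n_1-n_2$, one obtains $n_2+n_3\leq 2(d-n_1-n_2)$, which rearranges to $\deg_{T'}(w)\leq 3d-2n_1-4n_2\leq 3d$.

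\paragraph{Conclusion and main obstacle.} Iterating the lemma, after $k$ simultaneous slides starting from $T_1$ one has $\deg(v_n)\leq 3^k$; reaching $T_2$ therefore forces $k\geq\log_3(n-1)=\Omega(\log n)$. The principal obstacle is the combinatorial bookkeeping in the key lemma: type-(i) slides at $w$ preserve $\deg(w)$ yet still remove an edge incident to $w$, thereby shrinking the supply of potential intermediate edges at $w$, and this effect must be tracked without overcounting. The geometric ingredient is easy, and the constant $3$ is likely not tight, but any constant suffices for the $\Omega(\log n)$ lower bound.
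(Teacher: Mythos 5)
Your argument is correct, but it takes a genuinely different route from the paper. The paper chooses as its two trees a pair of hull paths $P_1,P_2$ differing in a single hull edge $uv$, and its potential function is the size of the dual cell incident to $uv$: since that cell shrinks only when an edge of the cell slides along an adjacent edge of the same cell, one simultaneous round at most halves the cell size, which must drop from $n$ to $2$, giving $\log_2(n/2)$ rounds. You instead take two stars and track the degree of the target center $v_n$, proving that one simultaneous round at most triples the degree of any fixed vertex; your bookkeeping $\deg_{T'}(w)=d+n_3-n_2$ together with $n_2+n_3\leq 2(d-n_1-n_2)$ (each surviving edge at $w$ serves as intermediate edge for at most two slides, one per side) checks out, and the fact that at most two slides can share an intermediate edge is exactly the observation the paper itself uses in its Proposition on $f_{\ses}(n)=\Omega(n)$. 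What your approach buys: the degree lemma nowhere uses convexity, so it also yields an $\Omega(\log n)$ lower bound for arbitrary point sets (subsumed by the paper's $\Omega(n)$ bound in general position, but obtained for free), and it is arguably cleaner since it avoids reasoning about how dual cells evolve. What the paper's approach buys is a marginally better constant ($\log_2$ versus $\log_3$), which is immaterial for the asymptotic statement.
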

\begin{proof}
Consider two different paths $P_1$ and $P_2$ along the convex hull of a point set of size $n$ in convex position where the edge $uv$ is in $P_2$ but not in $P_1$. Let $(P_1=T_0,T_1,\ldots , T_m=P_2)$ be a sequence of trees in $\mathcal{T}(S)$ such that any two consecutive trees are related by a simultaneous edge slide.
For $i=0,\ldots , m$, let $C(i)$ be the cell that is incident to $uv$.
Since $T_0=P_1$ is a path along the convex hull, $C(0)$ is the only cell in the dual graph, in particular it is incident to all $n$ vertices. To transform $P_1$ into $P_2$, the edge $uv$ needs to be added;
thus, one has to slide edges of $P_1$ until cell $C(i)$ vanishes (i.e., its size drops to 2).
The size of $C(i)$ decreases only if an edge of $C(i)$ slides along another edge of $C(i)$, that is,
any size-decreasing edge slide involves two consecutive edges of $C(i)$.
Consequently, a simultaneous edge slide decreases the size of $C(i)$ by at most a factor of 2,
and so any sequence of simultaneous edge slides must use at least $m\geq \log_2 (n/2)=\Omega(\log n)$ operations.
\end{proof}

In the proof of the following result, we repeatedly apply a reduction step that ``removes'' a constant fraction of the leaves; this idea was originally developed for simultaneous flip operations in triangulations~\cite{Bose2007,Galtier2013}.

\begin{theorem}\label{thm:sim_slide_upper_convex}
Every plane tree in $\mathcal{T}(S)$, where $S$ is a set of $n$ points in convex position,
can be transformed into any other tree in $\mathcal{T}(S)$ using $O(\log n)$ simultaneous edge slides;
that is, $f_{\ses}^{\cx}(n)=O(\log n)$.
\end{theorem}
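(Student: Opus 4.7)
The plan is to prove by induction on $n$ that every plane tree $T\in\mathcal{T}(S)$ can be transformed into a fixed canonical tree --- the path $P = v_1 v_2 \cdots v_n$ along the convex hull of $S$ --- using at most $c \log_2 n$ simultaneous edge slides, for some absolute constant $c$. The theorem's bound $f_{\ses}^{\cx}(n)=O(\log n)$ then follows by the triangle inequality.

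The reduction mirrors the parallel-flip arguments for triangulations of convex polygons of~\cite{Bose2007,Galtier2013}. In convex position the empty-triangle condition for a slide is automatic, since no point of $S$ lies strictly inside any triangle spanned by three other points of $S$; so only the combinatorial conditions (the pivot edge $qr$ lies in $T$ and the target edge is new) need to be checked. After rooting $T$ at an extremal vertex, say $v_1$, each internal vertex $v$ with parent $u$ and grandparent $w$ admits the \emph{path-halving} slide $vu\to vw$ that halves $v$'s depth. I would combine this with a Miller--Reif-style parallel tree contraction that alternates ``rake'' steps --- sliding a leaf's edge so that the leaf becomes attached to a hull-neighbour via a hull edge, eventually placing it in its canonical position in $P$ --- with ``compress'' steps that execute path-halving on degree-$2$ vertices. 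Since at least half of the vertices of any tree are leaves or of degree $2$, a constant fraction of the tree can be processed in a single simultaneous slide, yielding a recurrence of the form $R(n)\leq R((1-\delta)n)+O(1)$, which solves to $R(n)=O(\log n)$. A single cleanup round using Lemma~\ref{Lemma: P to P'} then fixes the missing hull edge at its canonical position.

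The main obstacle is reconciling this combinatorial reduction with the restricted geometric non-overlap requirement of simultaneous slides: two concurrent slides may not have triangles that share even an edge. Two candidate slides whose pivot edges coincide (for instance, two siblings both doing path-halving through the same grandparent) produce triangles that share that pivot edge, so at most one of them can be selected per round. Similarly, a parent and child both executing path-halving produce edge-adjacent triangles. I plan to address this by treating the candidate slides as vertices of a planar conflict graph whose edges record shared pivots or shared triangle edges, and extracting a large independent set greedily. The hard part will be proving that this independent set still contains a constant fraction of the candidates, so that the parallel round does enough work to drive the recurrence. Once this independent-set argument is in place, the rest of the proof --- verifying that the resulting simultaneous slide indeed produces a valid tree in $\mathcal{T}(S)$, and that induction goes through --- is routine.
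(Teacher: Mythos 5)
There is a genuine gap. You claim that in convex position ``only the combinatorial conditions (the pivot edge $qr$ lies in $T$ and the target edge is new) need to be checked,'' and on that basis assert that every internal vertex $v$ with parent $u$ and grandparent $w$ admits the compress slide $vu \to vw$. This is false: the new edge $vw$ can cross other tree edges, even though no \emph{vertex} of $S$ lies inside $\Delta(uvw)$. Concretely, take $S=\{v_1,\dots,v_6\}$ in convex position, the plane spanning tree with edges $v_6v_1,\, v_1v_2,\, v_2v_3,\, v_1v_4,\, v_4v_5$, rooted at $v_6$. Then $v_2$ is internal (child $v_3$), its parent is $v_1$ and its grandparent is $v_6$; but the proposed target edge $v_2v_6$ separates $v_1$ from $\{v_3,v_4,v_5\}$, hence crosses the tree edge $v_1v_4$, so the slide does not exist. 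The same issue afflicts the rake step: a leaf $v$ with parent $u$ can be reattached to a hull neighbour $x$ only if $ux$ happens to be a tree edge, which it need not be. Because the availability of the basic moves is not guaranteed, the recurrence $R(n)\leq R((1-\delta)n)+O(1)$ does not follow even once the conflict graph / independent-set argument (which you leave open) is filled in. The step you dismiss as routine --- ``verifying that the resulting simultaneous slide indeed produces a valid tree'' --- is precisely where the proposal breaks.

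The paper sidesteps this by working with the \emph{dual tree} of $T$ (the cell decomposition of $\conv(S)$ by the tree edges) rather than the rooted tree itself: path-halving is performed only along a \emph{convex path} on the boundary of a single cell, where the empty cell interior guarantees the slide exists, and the parallel conflicts are controlled because distinct cells are interior-disjoint. That yields Phase~1 (all cells $O(1)$); Phase~2 then harvests a constant fraction of ``good'' leaves per round via bounded-size local merges, and Phase~3 reinserts them to form a star. Your Miller--Reif contraction intuition is pointing at the right $O(\log n)$ target, but without the cell structure there is no way to certify that the candidate slides are individually legal, let alone simultaneously compatible.
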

\begin{proof}
Let $S$ be a set of $n\geq 3$ points in convex position, let $p\in S$ and $T_1\in \mathcal{T}(S)$.
It is sufficient to show that $T_1$ can be transformed into a star centered at $p$ using $O(\log n)$ simultaneous edge slides.

The outline of the proof is as follows.
We transform $T_1$ into a star centered at $p$ via some intermediate phases where each phase uses $O(\log n)$ simultaneous edge slide operations.
The assumption that $S$ is in convex position is crucial for maintaining the planarity of the intermediate trees.
In Phase~1, we transform $T_1$ into a spanning tree $T_2$ in which every dual cell has $O(1)$ vertices.
In Phase~2, we transform $T_2$ into a tree $T_3$ of diameter $O(\log n)$.
Finally in Phase~3, $T_3$ is transformed into a star centered at $p$.
All these phases require $O(\log n)$ simultaneous edge slides.

\paragraph{Phase~1: Constant-size cells.}
Let $pq$ be an arbitrary convex hull edge. Define the cell incident to $pq$ to be the root of the dual tree,
and direct all edges of the dual tree toward the root (cf.~\figurename~\ref{fig:dualtree}(b)).
The edge of $T_1$ that separates a cell $C$ from its parent is called the \emph{parent edge} of $C$.
The hull edge and the parent edge split the boundary of $C$ into two paths (a 1-vertex path is possible).
Note that the edges of these paths lie on the boundary of the convex hull of their vertex set; we call such a path a \emph{convex path}. A simultaneous edge slide can decrease the length of a convex path from $k\geq 2$
to $\lceil k/2\rceil$ by sliding every other edge along the previous edge of the path.
Note that these slides can be performed simultaneously in a cell $C$.
If $e$ is an edge in a convex path of $C$ and $e$ is incident to another cell~$C'$,
then $e$ is the parent edge of $C'$ and therefore there is no slide in $C'$ that involves~$e$,
however, an edge slide performed on the convex path of $C$ may insert one new edge into $C'$.

We apply simultaneous edge slides, each modifying all convex paths of length two or higher,
while there is a cell with 7 or more vertices.
If a cell has $m$ vertices, $m\geq 3$, then the two convex paths on its boundary jointly have $m-2$ edges.
The two convex paths lose at least $\lfloor (m-3)/2\rfloor$ vertices, and the cell may gain at most one vertex
from its parent. In particular, a cell with $m\in \{3,4\}$ vertices may gain at most one vertex;
a cell with $m\in\{5,6\}$ vertices loses at least one vertex and gains at most one; and
the size of a cell with $m\geq 7$ vertices strictly decreases. The size of every cell with $m\geq 7$ vertices
goes down by a factor of $(m-\lfloor (m-3)/2\rfloor+1)/m = \lceil (m+5)/2\rceil/m\leq\frac{7}{8}$ or less.
Therefore the while loop terminates after $O(\log n)$ simultaneous edge slides.
We obtain a tree $T_2$ where every dual cell has 6 or fewer vertices.

Denote by $t$ the number of nodes of the dual tree of $T_2$; and let $n_0,\ldots , n_{t-1}$
denote the number of vertices of the $t$ cells. We derive a lower bound on $t$ by double counting the
number of incident cell-vertex pairs $\sum_{i=0}^{t-1}n_i$. On one hand, $n_i\leq 6$ for $i=0,\ldots , t-1$,
hence $\sum_{i=0}^{t-1}n_i\leq 6t$. On the other hand, Lemma~\ref{lem:sum} yields $\sum_{i=0}^{t-1}n_i=n+2(t-1)$.
The inequality $n+2t-2\leq 6t$ yields $t \geq (n-2)/4$.

\paragraph{Phase~2: Creating good leaves.}
A leaf of a spanning tree $T$ (or of a subtree) is called \emph{good}
if the edge incident to it is an edge of the convex hull of the vertices of $T$.
Note that if we remove a good leaf from $T$ to obtain a tree $T'$, then edge slides on the resulting tree $T'$ can also be performed in the entire tree $T$ (that is, the edge of a good leaf does not obstruct any edge slide in $T'$).
The main idea of transforming $T_2$ into a tree $T_3$ of diameter $O(\log n)$ is
to repeatedly make a constant fraction of vertices to be good leaves and then
``remove'' them (meaning that these leaves are disregarded in later iterations).

Let $T_2$ be a spanning tree with cells of size at most six.
Let $t_0$, $t_1$, and $t_2$, denote the number of nodes of the dual
tree with 0, 1, and more children, respectively, where $t_0 + t_1 + t_2 = t$.
Note that $t_0 \geq t_2$. We show that we can apply $O(1)$ simultaneous edge
slides until a constant fraction of the surviving vertices are good leaves
of the current tree.

First we perform the following ``clean-up'' step.
Let $\mathcal{C}_0$ (resp., $\mathcal{C}_1$) be the nodes in the dual tree that have precisely one child and
are at even (resp., odd) distance from the root. We assign the cells to $\mathcal{C}_0$ and $\mathcal{C}_1$ at the beginning of this phase, and do not reassign them later even though their distance from the root may change.
If a cell corresponding to a node in $\mathcal{C}_0$ and the cell of its child jointly have at most four vertices,
we transform them into a single cell by sliding the edge between them
into the hull edge of the parent cell using an edge slide (see \figurename~\ref{fig:convex_slide_degree_2} for two examples).
These edge slides can be performed simultaneously.
If a cell $C_1\in \mathcal{C}_1$ is merged with its parent cell $C_0\in \mathcal{C}_0$,
we replace $C_1$ with the combined cell $C_0\cup C_1$ in the set $\mathcal{C}_1$.
All other cells in $\mathcal{C}_1$ remain in $\mathcal{C}_1$ (even though their distance from the root might change).
Note that every cell in $\mathcal{C}_1$ still has precisely one child.
Then, we perform an analogous transformation for every cell in $\mathcal{C}_1$.
Both clean-up operations maintain the invariant that every cell has at most 6 vertices.
The two clean up operations take two simultaneous edge slides; denote by $T_2'\in \mathcal{T}(S)$
the resulting tree.

We claim that if a node in the dual graph of $T_2'$ has precisely one child,
then the corresponding cell and the cell of its child jointly have 5 or more vertices.
Suppose, to the contrary, that cells $C_a$ and $C_b$ are parent and child,
with four vertices combined, and $C_a$ has only one child. Then both $C_a$ and $C_b$
are triangles, hence they have not been merged with any other cell during the clean-up steps.
This implies that $C_a\in \mathcal{C}_0\cup \mathcal{C}_1$, and $C_a$ would
have been merged with $C_b$ in one of the two clean-up steps, contradicting our
assumption that both $C_a$ and $C_b$ are triangles.

\begin{figure}[htbp]
\centering
\includegraphics[page=1]{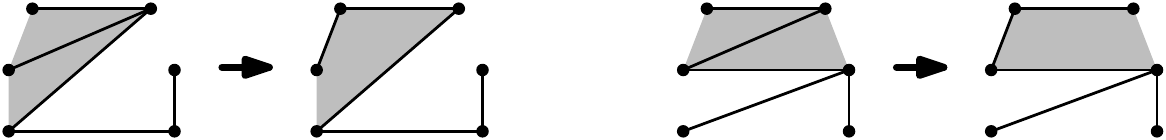}
\caption{Triangular cells of degree 2 are merged by one slide only involving the parent edge.}\label{fig:convex_slide_degree_2}
\end{figure}

Next, while there is a cell $C_a$ with a single child $C_b$ and a single grandchild $C_c$, do the following.
The cells $C_a$ and $C_b$ each have at most 6 vertices, and they jointly have at least 5 vertices
(due to the clean-up step). By Lemma~\ref{Lemma: P to P'}, we can transform the edge between
$C_a$ and $C_b$ into the hull edge of $C_a$ or $C_b$ using at most $6-2=4$ edge slides, as described below.
The parent edge of $C_a$ and the parent edge of $C_c$ split the boundary of $C_a\cup C_b$
into two paths $P_1$ and $P_2$ (the paths $P_1$ and $P_2$ need not be contained in the current tree),
which jointly have at least $5-2=3$ edges, including the hull edges of $C_a$ and $C_b$;
see \figurename~\ref{fig:convex_slide_degree_merge} for examples.
Without loss of generality, assume that $P_1$ has two or more edges and
contains the hull edge of $C_a$ or $C_b$.
If $P_1$ contains the hull edge of both $C_a$ and $C_b$, then we slide
the edge between $C_a$ and $C_b$ into the hull edge of $C_a$;
otherwise we slide it to the hull edge in $P_2$.
As a result $P_1$ contains the hull edge of the combined cell $C_a\cup C_b$.
An endpoint of this hull edge is in the interior of $P_1$,
and hence it is a good leaf of the current tree.

\begin{figure}[htbp]
\centering
\includegraphics[page=2]{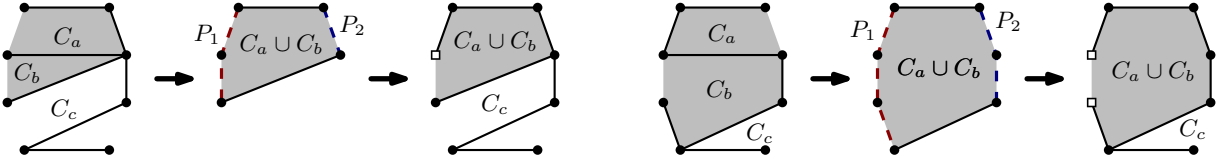}
\caption{For cells $C_a$ with a single child $C_b$ and a single grandchild $C_c$ such that $C_a$ and $C_b$ jointly have at least five vertices, we can slide the edge between $C_a$ and $C_b$ to the boundary of $\conv(S)$ and obtain at least one good leaf.}
\label{fig:convex_slide_degree_merge}
\end{figure}

In the dual tree of $T_2'$, consider the maximal paths induced by the $t_1$ nodes with a unique child;
note that these paths are vertex disjoint. Denote by $p$ the number of these paths. It is easy to show
that $p\leq t_0+t_2-1$. Indeed, if we compress each path into their unique child of the lowest node,
then we obtain a rooted tree with $t_0+t_2$ nodes, where each path is compressed into a unique node,
but not into the root. A maximum matching $M$ in these paths covers all but at most one vertex in each path.
That is, $|M|\geq (t_1-p)/2\geq (t_1-(t_0+t_2-1))/2=(t_1-t_0-t_2+1)/2$.
For each pair in $M$, we can create one good leaf, as described above.

Let us count the number of good leaves we can obtain with these operations.
For each of the $t_0$ leaves of the dual tree of $T_2'$, and
for each pair of nodes in the matching $M$, we obtain at least one good leaf.
Thus the number of good leaves is at least $t_0+(t_1-t_0-t_2+1)/2=(t_1+t_0-t_2+1)/2$.

We argue that we obtain at least $t/6$ good leaves. This certainly holds if $t_0 \geq t/6$.
Suppose now that $t_0 < t/6$. Then $t_0 \geq t_2$ yields $t_0+t_2 < t/3$ hence $t_1 > 2t/3$;
and further $(t_1+t_0-t_2+1)/2\geq (t_1+1)/2> (2t/3+1)/2>t/6$, as required.
Using the bound $t\geq (n-2)/4$ obtained above, this yields at least $(n-2)/24$ good leaves.

We can now summarize the steps to transform $T_2$ into $T_3$ using $O(\log n)$ simultaneous edge slide operations.
Starting from $T_2$, we repeatedly create good leaves and remove them.
Denote by $L_i$ the set of good leaves removed in iteration $i$.
Each iteration removes at least a $\frac{1}{24}$-fraction of the vertices.
In each iteration, the edges incident to good leaves are edges of the convex hull of the
current subtree, consequently at most two such edges are incident to the same vertex in the subtree.
After $r \in O(\log n)$ iterations, we are left with a single vertex $p$.
The tree $T_3\in \mathcal{T}(S)$ is the tree obtained by these ``removal'' operations.
Specifically, the tree $T_3$ is rooted at $p$, and $L_i$ is the set of vertices
distance $i$ from $p$ for $i=1,\ldots ,\lceil\log_{24}n\rceil$. Consequently,
the depth (hence diameter) of $T_3$ is $O(\log n)$, as claimed.
As noted above, each vertex in $T_3$ has at most two children.

\begin{figure}[htbp]
\centering
\includegraphics{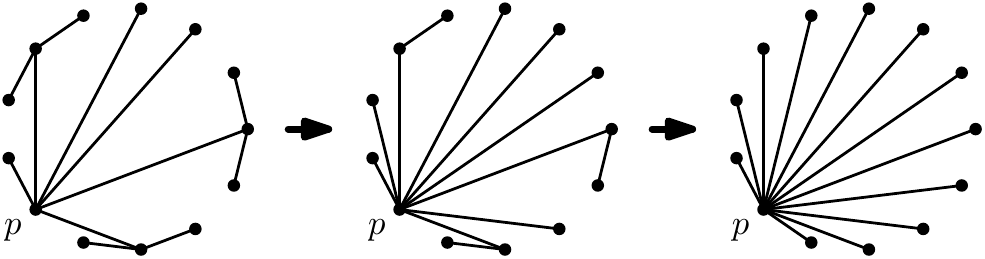}
\caption{One round of sliding good leaves of a subtree into a star centered at~$p$.}
\label{fig:slide_to_star}
\end{figure}

\paragraph{Phase~3: Creating a star.}
The one-vertex tree on $p$ is a star.
We re-insert the leaves in $L_i$ for $i=r,r-1,\ldots ,1$
(in reverse order) in $r$ rounds, and transform the subtree into a star centered at $p$.
In round $j$, we re-insert the edges to the vertices in $L_{r+1-j}$.
They are each adjacent to the current star centered at $p$,
and each vertex of the current star is incident to at most two edges in $L_{r+1-j}$, as noted above.
For each leaf of the current star, we can slide an incident edge in $L_{r+1-j}$ to
the root, and these edge slides can be performed simultaneously.
Using up to two simultaneous edge slides, all the edges in $L_{r+1-j}$ become incident to $p$;
see~\figurename~\ref{fig:slide_to_star} for an illustration.
After $r$ rounds, we obtain the star centered at $p$.
As mentioned earlier, the edge slides performed in a subtree
can also be performed in the whole tree, as the edges incident
to good leaves do not obstruct any edge slides.
This completes the proof.
\end{proof}

\section{Labeled Edges}
\label{sec:labeled}
In this section, we prove new lower and upper bounds for the diameters of the transition graphs in the edge-labeled setting, which are
summarized in Tables~\ref{tbl:labeled_general} and~\ref{tbl:labeled_convex}.

\subsection{Exchange and Compatible Exchange}

\subsubsection{General Position}

Recall that one simultaneous exchange can transform any (unlabeled) tree $T=(S,E_1)$ into any other tree $T_2=(S,E_2)$,
since it is enough to move the edges in $E_1\setminus E_2$ to $E_2\setminus E_1$. In the edge-labeled case, however, the edges in $E_1\cap E_2$ may have different labels in the two trees. For example, if $E_1=E_2$ and the labels are shifted cyclically, then one simultaneous exchange cannot move any edge in $E_1$ to its position in $E_2$. Two simultaneous exchanges suffice if there exists a tree $T_3=(S,E_3)$ such that $E_1\cap E_3=\emptyset$. This strategy does not work directly for simultaneous compatible exchanges: Garc\'{\i}a et al.~\cite{Garcia14} constructed a tree $T_1\in \mathcal{T}(S)$ such that any compatible tree $T_2\in \mathcal{T}(S)$ has at least
$(n-2)/5$ edges in common with $T_1$. We start with an easy observation about unlabeled spanning trees.

\begin{propo}\label{pro:disjoint-trees}
Let $S$ be a set of $n\geq 2$ points in general position.
\begin{enumerate}\itemsep 0pt
\item\label{disjoint:1} If $S$ is not in convex position, then there exist two compatible edge-disjoint trees in $\mathcal{T}(S)$.
\item\label{disjoint:2} If $S$ is in convex position, then for every edge $uv$ of $\conv(S)$, there exist two compatible trees
      in $\mathcal{T}(S)$, namely a path along $\conv(S)$ and a star centered at $u$ or $v$,
      such that both contain edge $uv$, but they do not share any other edges.
\end{enumerate}
\end{propo}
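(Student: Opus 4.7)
My plan is to give explicit constructions for both parts. For part~\ref{disjoint:1}, I would pick a point $p\in S$ strictly interior to $\conv(S)$, which exists since $S$ is not in convex position, and label the remaining points $q_1,\ldots,q_{n-1}$ in counterclockwise angular order around $p$. The key geometric input is that, because $p$ is strictly interior, every angular gap between consecutive rays $pq_i$ and $pq_{i+1}$ is less than $\pi$; each such wedge is therefore convex and contains the chord $q_iq_{i+1}$, which means the closed polygonal chain $q_1q_2\cdots q_{n-1}q_1$ is a simple polygon with $p$ in its interior.

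I would then take
\[
T_1:=\{pq_1\}\cup\{q_iq_{i+1}:1\le i\le n-2\},
\qquad
T_2:=\{pq_i:2\le i\le n-1\}\cup\{q_{n-1}q_1\},
\]
so that $T_1$ is the path $p,q_1,\ldots,q_{n-1}$ and $T_2$ is the partial star at $p$ completed by the wrap-around chord $q_{n-1}q_1$. Three routine checks remain: each $T_j$ has $n-1$ edges and is connected, hence is a spanning tree; the two edge sets are disjoint for $n\ge 4$ (the only $p$-edge of $T_1$ is $pq_1$, missing from $T_2$, while the only polygon edge of $T_2$ is $q_{n-1}q_1$, distinct from the consecutive-index polygon edges of $T_1$); and $T_1\cup T_2$ is non-crossing, because its $p$-incident edges form the full star at $p$, its remaining edges form the simple polygon above, and each radial edge $pq_i$ lies on a wedge boundary and so meets any polygon edge only at the shared vertex $q_i$.

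For part~\ref{disjoint:2}, let $w$ be the hull neighbor of $u$ other than $v$. I would take $P$ to be the Hamiltonian path along $\conv(S)$ obtained by deleting the hull edge $uw$, and $S_u$ to be the star centered at $u$. Both trees contain $uv$ (only $uw$ was removed from the hull boundary), and since the only hull edges incident to $u$ are $uv$ and $uw$, we get $P\cap S_u=\{uv\}$. Compatibility is immediate: the edges of $P$ are pairwise non-crossing hull edges; the edges of $S_u$ share the vertex $u$; and a hull edge $xy$ with $u\notin\{x,y\}$ cannot cross a chord $uz$ with $z\notin\{x,y\}$, because inside the convex polygon a chord meets the boundary only at its two endpoints.

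The only step that requires real care is the simple-polygon claim in part~\ref{disjoint:1}: if $p$ were not strictly interior, some angular gap around $p$ could reach or exceed $\pi$, the corresponding wedge would fail to be convex, and the polygon edges $q_iq_{i+1}$ could then cross one another, breaking the whole construction. Everything else reduces to direct edge-by-edge bookkeeping.
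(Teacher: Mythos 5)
Your proof is correct and follows essentially the same approach as the paper: both parts rely on the same geometric facts, namely that an interior point $p$ together with the radial star and the polygon through the remaining points in angular order around $p$ forms a non-crossing wheel graph (using that all angular gaps are below $\pi$) which can be split into two edge-disjoint spanning trees, and that in the convex case a hull path paired with a star at an endpoint of $uv$ works. The only difference is the concrete decomposition of the wheel in part~\ref{disjoint:1}: you take the Hamiltonian path $p,q_1,\ldots,q_{n-1}$ together with a star missing one spoke plus the wrap-around polygon edge, whereas the paper takes a star missing one spoke plus one polygon edge paired with a polygon cycle missing one edge plus one spoke; this is a cosmetic variation of the same idea.
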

\begin{proof}
Assume that $S$ is not in convex position, hence $n\geq 4$. Let $v_1\in S$ be an arbitrary point in the interior of $\conv(S)$,
and label the remaining points in $S$ by $v_2,\ldots , v_n$ in radial order around $v_1$. Since $v_1$ lies in the interior of $\conv(S)$, we have $\angle v_i v_1 v_{i+1}<\pi$ for $i=1,\ldots, n-1$; and $\angle v_n v_1 v_2<\pi$. In particular, the spanning star centered at $v_1$ and the cycle $(v_1,v_2,\ldots , v_n)$ are compatible. We can now define two compatible edge-disjoint trees $T_A,T_B\in \mathcal{T}(S)$. The tree $T_A$ is obtained from the spanning star centered at $v_1$ by replacing the edge $v_1 v_2$ with $v_2 v_3$. The tree $T_B$ is obtained from the cycle $(v_2,\ldots , v_n)$ by replacing the edge $v_2 v_3$ with $v_1 v_2$.

Assume now that $S$ is in convex position, and its vertices are labeled $v_1,\ldots , v_n$ in counterclockwise order around the boundary of $\conv(S)$. Let $T_A$ be the spanning path $(v_1,\ldots , v_n)$, and let $T_B$ be the spanning star centered at $v_1$. It is clear that $T_A$ and $T_B$ are compatible, and only the edge $v_1v_2$ appears in both trees.
\end{proof}

\begin{propo}\label{pro:labeled-sim-exchange}
If $S$ is not in convex position, then every edge-labeled plane tree in $\mathcal{L}(S)$ can be transformed into any other tree in $\mathcal{L}(S)$ using at most 3 simultaneous exchanges, or $O(\log n)$ simultaneous compatible exchanges.
\end{propo}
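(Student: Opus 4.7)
My plan is to transit $(T_1,\lambda_1) \to (T_A,\mu_A) \to (T_B,\mu_B) \to (T_2,\lambda_2)$ via two trees $T_A, T_B \in \mathcal{T}(S)$ supplied by Proposition~\ref{pro:disjoint-trees}(\ref{disjoint:1}) that are simultaneously compatible and edge-disjoint. The pivotal observation to exploit is that because $E_A \cap E_B = \emptyset$, a single simultaneous exchange (and even a single simultaneous compatible exchange, since $T_A \cup T_B$ is noncrossing) from $T_A$ to $T_B$ is parameterized by an arbitrary bijection $\pi_2 : E_A \to E_B$, and by varying $\pi_2$ I can realize every possible labeling of $T_B$ starting from any given labeling of $T_A$. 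This ``middle step'' will absorb all label mismatches introduced by the outer steps.

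For the three-exchange bound, I would first perform a simultaneous exchange $T_1 \to T_A$ with an arbitrary bijection $\pi_1 : E_1 \setminus E_A \to E_A \setminus E_1$, yielding some specific labeling $\mu_A$ on $T_A$. Then I commit in advance to any bijection $\pi_3 : E_B \setminus E_2 \to E_2 \setminus E_B$ for the final step; in order for the simultaneous exchange $T_B \to T_2$ to produce the prescribed $\lambda_2$, the starting labeling on $T_B$ is forced to be
\begin{equation*}
\mu_B(e) = \begin{cases} \lambda_2(e) & \text{if } e \in E_B \cap E_2, \\ \lambda_2(\pi_3(e)) & \text{if } e \in E_B \setminus E_2, \end{cases}
\end{equation*}
which is a bijection $E_B \to \{1,\dots,n-1\}$. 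I then pick $\pi_2 := \mu_B^{-1} \circ \mu_A$ and execute the simultaneous exchange $T_A \to T_B$ as the middle step; the label-transport identity $\mu_A \circ \pi_2^{-1} = \mu_B$ is exactly what is needed, so the composition maps $(T_1,\lambda_1)$ to $(T_2,\lambda_2)$ using exactly three simultaneous exchanges.

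For the $O(\log n)$ bound under simultaneous compatible exchanges, I keep the middle step as a single simultaneous compatible exchange (valid because $T_A \cup T_B$ is noncrossing) and replace the two outer steps by sequences of $O(\log n)$ simultaneous compatible exchanges each. The existence of such unlabeled trajectories $T_1 \to T_A$ and $T_B \to T_2$ follows from Aichholzer et al.~\cite{aichholzer2002sequences}, who establish $\diam(\mathcal{G}_{\sce}(S)) = O(\log n)$. Once the unlabeled trajectory and the bijections of the third stage are fixed, label propagation through that stage defines a bijection $\Phi_3$ between the labelings of $T_B$ and the labelings of $T_2$; setting $\mu_B := \Phi_3^{-1}(\lambda_2)$ and realizing it via the middle step exactly as above yields the labeled transformation in $O(\log n) + 1 + O(\log n) = O(\log n)$ operations.

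The main obstacle will be to guarantee that the middle step truly permutes all $n-1$ labels freely, and this rests entirely on having $T_A$ and $T_B$ both edge-disjoint and compatible. Proposition~\ref{pro:disjoint-trees}(\ref{disjoint:1}) supplies such a pair precisely when $S$ is not in convex position, which matches the hypothesis of the proposition; the convex case must be treated separately since, for instance, a star centered at a convex hull vertex shares an edge with every other spanning tree on $S$, blocking the free-permutation argument.
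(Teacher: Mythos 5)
Your proposal is correct and follows essentially the same route as the paper: obtain the edge-disjoint compatible pair $T_A,T_B$ from Proposition~\ref{pro:disjoint-trees}\eqref{disjoint:1}, move $T_1$ to $T_A$ and (in reverse) $T_2$ to $T_B$, and use the free bijection $E_A\to E_B$ in the middle step to fix all labels. Your explicit bookkeeping with $\pi_2=\mu_B^{-1}\circ\mu_A$ just makes precise what the paper phrases as transforming $T_2$ into $T_B$ and reversing; the argument is the same.
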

\begin{proof}
Let $T_1,T_2\in \mathcal{L}(S)$ be two plane trees. By Proposition~\ref{pro:disjoint-trees}\eqref{disjoint:1}, there are two \emph{unlabeled} edge-disjoint compatible plane trees $T_A,T_B\in \mathcal{T}(S)$. We can now describe the transformation between the edge-labeled plane trees $T_1$ and $T_2$. Ignoring the labels, transform $T_1$ into $T_A$ using one simultaneous exchange (resp., $O(\log n)$ simultaneous compatible exchanges~\cite{aichholzer2002sequences}). Similarly, transform $T_2$ into $T_B$. These operations produce some edge labelings on $T_A$ and $T_B$, respectively. Since $T_A$ and $T_B$ are compatible and edge-disjoint, one simultaneous compatible exchange transforms $T_A$ into $T_B$ with matching labels.
\end{proof}

Now, we show the upper bound of 3 in Proposition~\ref{pro:labeled-sim-exchange} is tight.

\begin{propo}\label{pro:labeled-sim-lower}
Let $T_1,T_2\in \mathcal{L}(S)$, $|S|\geq 3$ be two edge-labeled spanning stars with the same center. If each (unlabeled) edge has two different labels in the two stars, then 3 simultaneous exchanges are required to transform $T_1$ into $T_2$.
\end{propo}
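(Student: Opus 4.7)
The plan is to rule out transforming $T_1$ into $T_2$ in fewer than three simultaneous exchanges. The key structural observation is that since both $T_1$ and $T_2$ are stars centered at the same vertex $c$, their underlying (unlabeled) edge sets coincide: $E_1 = E_2 = \{cv : v \in S \setminus \{c\}\}$. The subtlety (and the main thing to keep in mind throughout) is that a simultaneous exchange in $\mathcal{L}(S)$ is still a \emph{geometric} operation, equipped with a bijection between the geometric symmetric difference of the two trees, and the label transfer is merely layered on top. Thus any geometric edge that is not exchanged retains its label.

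First I would dispose of a single simultaneous exchange. Because $E_1 = E_2$, the sets $E_1 \setminus E_2$ and $E_2 \setminus E_1$ are both empty, so the only admissible bijection is the empty one: no edge is exchanged, no label is transferred, and the operation just produces $T_1$ again. Hence $T_2$ is unreachable in a single operation.

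The main case — and where I expect the argument to require slightly more care — is excluding two operations $T_1 \to T' \to T_2$. Write $A = E_1 \setminus E'$ and $B = E' \setminus E_1$ for the geometric edges removed and added in the first operation. Since $E_2 = E_1$, the second operation must remove exactly $B$ and add exactly $A$. Now pick any $e \in E_1 \cap E'$: it is untouched by the first operation, so its label in $T'$ equals $L_1(e)$, and it is untouched by the second operation, so its label in $T_2$ is still $L_1(e)$. The hypothesis $L_1(e) \neq L_2(e)$ for every edge then forces $E_1 \cap E' = \emptyset$. But $T'$ is a spanning tree, so $c$ must be incident to at least one edge of $T'$; any such edge $cv$ lies in $E_1$, so it lies in $E_1 \cap E'$, contradicting what we just derived. (Equivalently, $E_1 \cap E' = \emptyset$ would isolate $c$ in $T'$.)

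With one and two operations both excluded, the distance between $T_1$ and $T_2$ in the transition graph of simultaneous exchanges is at least $3$, which is the desired bound. The only real content is the label-tracking argument in the two-step case; the matching upper bound of $3$ is already covered by Proposition~\ref{pro:labeled-sim-exchange}, so this lower bound shows the result is tight.
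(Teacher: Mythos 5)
Your proof is correct and follows essentially the same route as the paper's: both arguments hinge on the intermediate tree containing an edge incident to the common center, which necessarily lies in $E_1$, and on showing that such an edge would carry its original label through both operations (or would have to be absent from the final star), yielding a contradiction. Your explicit dismissal of the one-operation case and the bookkeeping via $E_2=E_1$ are fine, just a slightly more detailed phrasing of the paper's two-step argument.
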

\begin{proof}
Let $v\in S$ be the common center of the spanning stars $T_1$ and $T_2$. Suppose that 2 simultaneous exchanges can transform $T_1$ into $T_2$. The first operation carries $T_1$ into some tree $T_3$, in which $v$ is incident to at least one edge, say $uv$, and hence this operation did not move $uv$. Therefore, the second operation moves $uv$ to another edge, which implies that $uv$ is not present in $T_2$, contradicting the assumption that $T_2$ is a star centered at~$v$.
\end{proof}

\subsubsection{Convex Position}

If $S$ is in convex position, then $\mathcal{T}(S)$ does not contain two edge-disjoint spanning trees, and we need a more careful analysis for handling edge labels.

\begin{propo}\label{pro:labeled-cx-sim-exchange}
Every edge-labeled plane tree in $\mathcal{L}(S)$, for a set $S$ in convex position, can be transformed into any other tree in $\mathcal{L}(S)$ using at most 3 simultaneous exchanges, or 4 simultaneous compatible exchanges.
\end{propo}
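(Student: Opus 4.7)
The plan is to extend Proposition~\ref{pro:labeled-sim-exchange} to convex position using Proposition~\ref{pro:disjoint-trees}\eqref{disjoint:2}: for any hull edge $uv$, there exist a path $T_A$ along $\conv(S)$ and a star $T_B$ centered at $u$ or $v$ that are compatible and share only the edge $uv$. In contrast to the general-position case, these auxiliary trees cannot be made edge-disjoint, so the label carried through the shared edge $uv$ must be routed carefully.

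For the $3$-simultaneous-exchange upper bound, I perform the sequence $T_1\to T_A\to T_B\to T_2$. Among the three composed bijections, the only label constrained across more than one operation is the one on $uv$, since $E_A\cap E_B=\{uv\}$; the remaining $n-2$ labels can be shuffled arbitrarily. The ``locked'' case $uv\in E_1\cap E_2$, which would force the label on $uv$ to be the same in $T_1$ and $T_2$, is avoided by selecting $uv$ to be a hull edge outside $E_1\cap E_2$; such a hull edge exists because $\conv(S)$ has $n$ hull edges but $|E_1\cap E_2|\le n-1$. In each remaining case, the label passing through the $uv$-channel must either reach or originate from a specific target edge in $T_2$ or $T_1$; by a case analysis on the incidences of these edges with $u$ and $v$, one verifies that an appropriate choice of star center (in $\{u,v\}$) and of the hull edge omitted by $T_A$ realizes the required routing.

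For the $4$-simultaneous-compatible-exchange upper bound, the obstacle is that $T_B\to T_2$ is not always a compatible exchange: star edges at the center may cross chords of~$T_2$. I therefore plan to use the four-step sequence $T_1\to T_A^u\to T_B^u\to T_A^v\to T_2$, where $T_A^u$ and $T_A^v$ are the hull paths paired with the stars at $u$ and at $v$, respectively, in Proposition~\ref{pro:disjoint-trees}\eqref{disjoint:2}. Each of the four steps is then a compatible simultaneous exchange: a hull path is compatible with any plane tree, and it is compatible with a star centered at a hull vertex. Using two distinct hull-path intermediates (in steps~$1$ and~$3$) provides the routing freedom needed to cope with the additional constraint that $E_{T_B^u}\cap E_{T_A^v}$ consists of both hull edges at the star center, and an analogous case analysis for the $uv$-channel shows that every target labeling of $T_2$ is attainable.

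The main technical obstacle in both parts is the forced overlap between any two plane spanning trees in convex position: the label on the shared edge is locked in transit through the central operations. Verifying that a careful choice of hull edge $uv$, star center, and hull-path intermediates always allows this label to reach its correct destination is the combinatorial core of the proof.
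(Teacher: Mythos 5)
There is a genuine gap, and it sits exactly in the case analysis you defer. Your rule ``choose the hull edge $uv$ outside $E_1\cap E_2$'' is not the right dichotomy: avoiding a locked edge is sometimes impossible to combine with a consistent routing. Concretely, let $T_1$ and $T_2$ be the \emph{same} unlabeled hull path $(v_1,\dots,v_n)$, with labelings that agree on the two end edges $v_1v_2$ and $v_{n-1}v_n$ but differ on some interior edges (possible for $n\geq 5$). The only hull edge outside $E_1\cap E_2$ is $v_1v_n$, so you must take $uv=v_1v_n$. If the star center is $v_1$, then the path $T_A$ sharing only $uv$ with the star is forced to omit $v_1v_2$, so in the step $T_1\to T_A$ the label of $v_1v_2$ (say $1$) is forced onto $v_1v_n$ and stays locked there through $T_A\to T_B$; but in $T_B\to T_2$ the shared edge $E_B\cap E_2=\{v_1v_2\}$ must already carry its $T_2$-label, which is again $1$ --- impossible, since $1$ sits on $v_1v_n$. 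The choice of center $v_n$ fails symmetrically with the label of $v_{n-1}v_n$. (Note also that, once the center is fixed and you insist on $E_A\cap E_B=\{uv\}$, the hull edge omitted by $T_A$ is \emph{not} free, contrary to what your routing argument assumes.) The same instance defeats your four-step compatible sequence $T_1\to T_A^u\to T_B^u\to T_A^v\to T_2$: the end-edge label is again forced onto $v_1v_n$ in the first step and remains locked through the middle steps, while the last step needs it on a shared edge of $T_A^v\cap E_2$. So the deferred ``case analysis on incidences with $u$ and $v$'' cannot be completed within your framework.

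The paper's proof shows what is actually needed: the case split is driven by labels, not by membership in $E_1\cap E_2$. If some hull edge carries the \emph{same} label in both trees (which is exactly the situation in the counterexample above), one routes through that edge rather than avoiding it (Case~1 of Proposition~\ref{pro:labeled-cx-sim-exchange}). If both trees are hull paths and no hull edge has matching labels, a path--star channel of the type you use is insufficient, and the paper instead passes through a star and a slightly modified star with a tailored relabeling (Case~2). Only when some tree has an edge interior to $\conv(S)$ --- which frees up two hull edges --- does a clean path-plus-star argument in the spirit of Proposition~\ref{pro:labeled-sim-exchange} go through (Case~3). Your write-up collapses all of this into one scheme whose crucial verification step is asserted rather than proved, and, as the example shows, it is false as stated; to repair it you would essentially have to reintroduce the paper's three-case structure.
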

\begin{proof}
Let $T_1,T_2\in \mathcal{L}(S)$ be two plane trees. If $|S|\in \{1,2,3\}$, then it is easily checked that $T_1$ can be transformed into $T_2$ using at most two simultaneous exchanges. Assume $|S|\geq 4$. We distinguish between three cases.

\paragraph{Case~1. An edge $uv$ of $\conv(S)$ is present with the same label in both $T_1$ and $T_2$.}
In this case, we proceed similarly to the proof of Proposition~\ref{pro:labeled-sim-exchange}.
Transform $T_1$ into a spanning path $T_A$ along the boundary of $\conv(S)$ that contains edge $uv$, with one endpoint at $u$; this takes one simultaneous (compatible) exchange. Transform $T_2$ into a spanning star $T_B$ centered at $u$. This takes one simultaneous exchange; and up to two simultaneous compatible exchanges: the first transforms $T_1$ into a path on the boundary of $\conv(S)$ that contains $uv$, and the second transforms this path into $T_B$.
Since $T_A$ and $T_B$ are compatible and share only edge $uv$, which has the same label in both trees,
one simultaneous compatible exchange can transform $T_A$ into $T_B$.

\paragraph{Case~2. $T_1$ and $T_2$ each correspond to some unlabeled path along the boundary of $\conv(S)$, but none of the edges of $\conv(S)$ is in both $T_1$ and $T_2$ with the same label.}
Note that $T_1$ and $T_2$ are each compatible with every tree in $\mathcal{T}(S)$.
Assume that $T_1$ corresponds to the unlabeled path $(v_1,\ldots , v_n)$ along $\conv(S)$; see \figurename~\ref{fig:Case2} for an example.
We may further assume that edge $v_1v_2$ in $T_1$ has the same label as edge $v_iv_{i+1}$ in $T_2$ for some $i\in \{2,\ldots , n-1\}$.
We apply three simultaneous exchanges. First, transform $T_1$ into a star $T_A$ centered at $v_1$ using one simultaneous (compatible) exchange. Note that edge $v_1v_2$ remains fixed. Since no other edge remains fixed, we may assume that
if edge $v_1v_n$ is present in $T_2$, then the edge of $T_1$ with the same label is mapped to $v_1v_n$; and
if $v_2v_3$ is present in $T_2$ and $i>2$, then the edge of $T_1$ with the same label is mapped to $v_1v_{i+1}$.

\begin{figure}[htbp]
\centering
\includegraphics{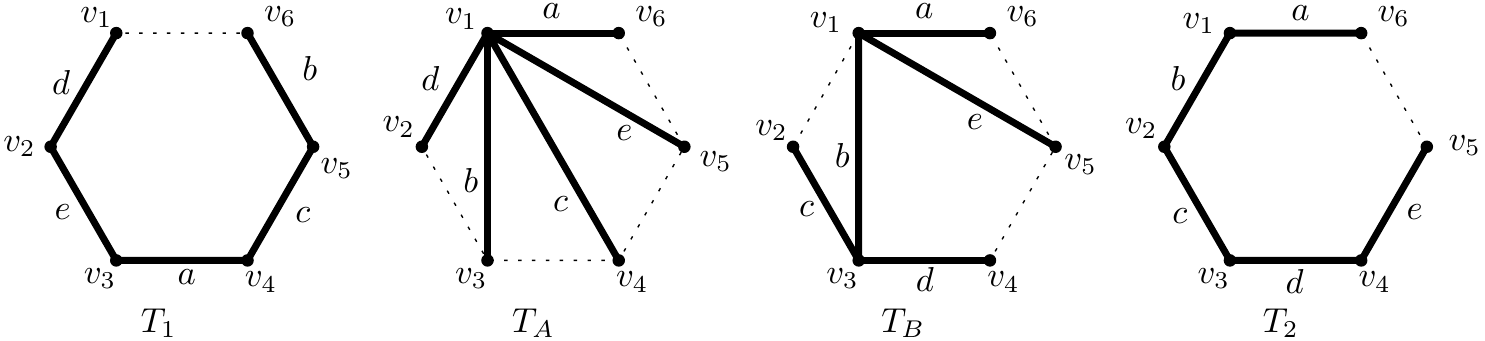}
\caption{Three compatible exchanges in Case~2, where $n=6$ and $i=3$.}
\label{fig:Case2}
\end{figure}

The graph $T_A+v_iv_{i+1}$ contains a unique cycle, namely the 3-cycle $(v_1,v_i,v_{i+1})$.
Let $T_B=T_A+v_2v_3-v_1v_2$ if $i=2$; and $T_B=T_A+v_iv_{i+1}-v_1v_{i+1}+v_2v_3-v_1v_2$ if $i>2$.
We can transform $T_A$ into $T_B$ with one simultaneous (compatible) exchange that maps $v_1v_2$ to $v_iv_{i+1}$,
and $v_1v_{i+1}$ to $v_2v_3$ (if $i>2$). Note that $T_B$ has up to 3 edges on the boundary of $\conv(S)$
(namely $v_2v_3$, $v_iv_{i+1}$, and $v_1v_n$), and these edges have the same label in both $T_B$ and $T_2$ if they are present in $T_2$.
Therefore, we can transform $T_B$ into $T_2$ using one simultaneous (compatible) exchange.

\paragraph{Case~3. No edge of $\conv(S)$ is present with the same label in both $T_1$ and $T_2$; and $T_1$ or $T_2$ has an edge in the interior of $\conv(S)$.} Without loss of generality, assume that $T_1$ has an edge $e_1$ in the interior of $\conv(S)$.
The tree $T_2$ cannot contain all edges of $\conv(S)$, since they form a cycle.
Without loss of generality, we may assume that $v_1v_2$ is not present in $T_2$.
Since $e_1$ is in the interior of $\conv(S)$, at least two edges of $\conv(S)$, say $h_1$ and $h_2$, are not present in $T_1$.
Transform $T_1$ into a path $T_A=(v_1,\ldots ,v_n)$ along the boundary of $\conv(S)$, using one simultaneous (compatible) exchange.
In particular, if $v_1v_n$ is present in $T_1$, then we can transform $e_1$ and $v_1v_n$ to $h_1$ and $h_2$, respectively.
Let $e_2$ be the edge in $T_2$ that has the same label as $v_1v_2$ in $T_A$.
Transform $T_2$ into a star $T_B$ centered at $v_1$ such that edge $e_2$ is mapped to $v_1v_2$.
This takes one simultaneous exchange, or two simultaneous compatible exchanges: the first transforms $T_2$ into the
path $(v_1,\ldots v_n)$ along the boundary of $\conv(S)$, and the second transforms this path into the star centered at $v_1$.
Since $T_A$ and $T_B$ are compatible and share only edge $v_1v_2$, which has the same label in both trees,
one simultaneous compatible exchange can transform $T_A$ into $T_B$.

In all three cases, we have transformed $T_1$ into $T_2$ using 3 simultaneous exchanges (resp., up to 4 simultaneous compatible exchanges), as claimed.
\end{proof}

\subsection{Rotation}

We prove a linear upper bound for the diameter of $\mathcal{G}_{\ro}^L(S)$ for a set of $n$ points in general position.

\begin{theorem}\label{thm:labeled-rotation}
Every edge-labeled plane tree in $\mathcal{L}(S)$, where $|S|=n\geq 2$, can be transformed into any other
using at most $11n-22$ rotations.
\end{theorem}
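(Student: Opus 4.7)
The plan is to reduce the labeled case to the unlabeled one by routing both input trees through a common labeled canonical form, following the structure of Avis and Fukuda's proof that underlies the bound $f_{\ro}(n) \leq 2n-4$, but augmenting it with a bounded number of extra ``label-sorting'' passes. Let $p\in S$ be a fixed extreme point, and let $S_p$ be the star centered at $p$.

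First I would apply Avis and Fukuda's algorithm (ignoring labels) to rotate $T_1$ into $S_p$ using at most $n-2$ rotations; since labels ride along the rotations, this produces a labeled star $(S_p,\lambda_1)$ for some induced labeling $\lambda_1$. I would do the same for $T_2$, producing $(S_p,\lambda_2)$; this sequence will be reversed in the final assembly. The remaining task is to transform $(S_p,\lambda_1)$ into $(S_p,\lambda_2)$, i.e. to realize the permutation $\sigma=\lambda_2\circ\lambda_1^{-1}$ on the edges of $S_p$, using $O(n)$ rotations. For this I would introduce a second canonical form: a spanning path $\pi$ of $S$ rooted at $p$ (for instance, the angular order of $S\setminus\{p\}$ around $p$, which is noncrossing because $p$ is extreme). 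Transforming $(S_p,\lambda_1)$ to $(\pi,\mu_1)$ and $(S_p,\lambda_2)$ to $(\pi,\mu_2)$ each takes at most $n-2$ rotations by the Avis--Fukuda style argument (applied in reverse), where $\mu_1,\mu_2$ are the induced labelings of $\pi$.

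The core of the argument is then a path-sorting lemma: any permutation of the edge labels on the canonical path $\pi$ can be realized by $O(n)$ rotations. I would argue this by decomposing the target permutation into adjacent transpositions of edge labels along $\pi$ (via an odd-even sorting network with $O(n)$ layers, each consisting of disjoint adjacent swaps), and then showing that each adjacent swap costs only a bounded number of rotations. An adjacent swap of labels on consecutive edges $u_{i-1}u_i$ and $u_iu_{i+1}$ of $\pi$ can be realized by rotating one edge to a temporary third edge sharing the vertex $u_i$, rotating the other edge into the freed slot, and restoring the first, provided a suitable ``detour'' edge through $u_i$ is available (which can be ensured by the angular sorting around $p$).

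The main obstacle is precisely this local swap step: in general position one cannot rely on any particular triangle being present or empty, so I would either (a) carry out the sorting inside a caterpillar-like tree where the spine provides the needed detour vertices, or (b) perform the sort in ``layers'' that alternate with a global re-shape of the tree, amortizing the re-shape cost over many swaps. Once the constants are counted — two Avis--Fukuda reductions tree$\leftrightarrow$star ($2(n-2)$ total), two star$\leftrightarrow$path reductions ($2(n-2)$), and the path-sorting phase (at most $7(n-2)$ by the sorting-network analysis) — the total is at most $11(n-2)=11n-22$, as claimed. The concatenation of $T_1\to(S_p,\lambda_1)\to(\pi,\mu_1)\to(\pi,\mu_2)\to(S_p,\lambda_2)\to T_2$ then realizes the full transformation in the required number of rotations.
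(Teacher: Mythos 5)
Your high-level plan (reduce both labeled trees to a common canonical configuration via Avis--Fukuda, then realize the residual label permutation by cheap local moves) is the same as the paper's, but the label-sorting phase as you describe it does not give the claimed bound, and this is a genuine gap. You propose to decompose the permutation into \emph{adjacent} transpositions via an odd-even sorting network: that network has $O(n)$ layers, but each layer contains up to $\lfloor (n-1)/2\rfloor$ swaps, so the total number of adjacent transpositions is $\Theta(n^2)$ in the worst case (e.g., for the reversing permutation). Since this theorem concerns \emph{single} rotations, you cannot collapse a layer into one operation, so each swap costs at least one rotation and your sorting phase costs $\Omega(n^2)$ rotations, not the ``at most $7(n-2)$'' you assert; the sorting-network count simply does not support a linear bound here. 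Moreover, you flag the feasibility of the local swap (availability of an uncrossed detour edge at the shared vertex) as the main obstacle and only sketch two unexecuted workarounds, so even the per-swap constant is not established.

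The paper closes exactly this gap differently: it sorts with at most $n-2$ \emph{arbitrary} transpositions (from the cycle decomposition of a permutation of the $n-1$ labels), and implements each such transposition with at most $7$ rotations by exploiting the extreme apex $p_1$ of the canonical radial path $P=(p_1,\dots,p_n)$. Concretely, to swap the labels of $p_ip_{i+1}$ and $p_jp_{j+1}$ ($i<j$), it rotates $p_ip_{i+1}$ to $p_1p_{i+1}$ and then to $p_1p_j$ (2 rotations), performs the 3-rotation label exchange inside the triangle $\Delta(p_1p_jp_{j+1})$ --- which is guaranteed to contain no tree edges because $p_1$ is extreme and the current tree is the radial path plus the single displaced edge --- and then reverses the detour (2 rotations). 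This yields $4(n-2)$ rotations for the two tree-to-path reductions plus $7(n-2)$ for the at most $n-2$ transpositions, i.e.\ $11n-22$. To repair your argument you would need to replace the adjacent-transposition network by this (or some other) constant-cost implementation of arbitrary transpositions, together with a concrete geometric reason why the needed temporary edge never crosses the tree.
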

\begin{proof}
Let $p_1$ be an extreme point in $S$, and let $p_2, \dots, p_n$ be the remaining vertices, indexed in counterclockwise order around $p_1$.
Ignoring the edge labels, we can transform any tree in $\mathcal{L}(S)$ into the same path $P=(p_1, \dots, p_n)$, using $2(n-2)$ rotations. Indeed, Avis and Fukuda~\cite{avis1996reverse} showed that every tree in $\mathcal{T}(S)$ can be transformed into a star centered at any extreme point of $S$ using at most $n-2$ rotations,
and the star centered at $p_1$ can be transformed into $P$ using $n-2$ additional rotations.
Every order of the edges along $P$ corresponds to a permutation of the $n-1$ labels.
Every permutation can be carried to any other permutation using at most $(n-1)-1=n-2$ transpositions.
It remains to show how such a transposition is implemented using a constant number of edge rotations.

Suppose we want to exchange the labels of the edges $p_i p_{i+1}$ and $p_j p_{j+1}$, with $1 \leq i < j < n$.
We rotate $p_ip_{i+1}$ to $p_1 p_{i+1}$ (we may omit this step if $i=1$).
Then, we rotate the resulting edge $p_1 p_{i+1}$ to $p_1 p_j$ (which may be omitted if $j=i+1$).
We can now exchange the labels of $p_j p_{j+1}$ and $p_1 p_j$ using three rotations, as shown in \figurename~\ref{fig:label change};
this operation temporarily adds the edge $p_1 p_{j+1}$, which is always possible as no part of the tree is in the interior of the triangle $\Delta(p_1 p_j p_{j+1})$. Once the edge $p_1 p_j$ has its new label, we rotate it back to its initial position $p_i p_{i+1}$, using the reverse of the rotation sequence described above (i.e., rotating it to $p_1 p_{i+1}$ and then back to $p_i p_{i+1}$).
As exchanging two labels requires not more than seven rotations, the total number of rotations to transform one labeled tree into another is at most $4(n-2) + 7(n-2) = 11(n-2)$.
\end{proof}

While we can transform a tree into a path using $O(\log n)$ simultaneous rotations by Theorem~\ref{thm:rotations}, we currently do not see how to change the permutation of the labels using a sub-linear number of simultaneous rotations.

\subsection{Empty-Triangle Rotation}

For both empty-triangle rotations and edge slides, we make use of the following operation used by Cano et al.~\cite[\figurename~9]{cano2013edge}. Two labeled edges that appear consecutively around a common vertex can be exchanged using three edge slides: If edges $up$ and $vp$ are consecutive in the radial order around $p$, and
$\angle upv\leq \pi$, we can exchange them by sliding $up$ to $uv$, sliding $vp$ to $up$, and then sliding $uv$ to $vp$; see \figurename~\ref{fig:label change}. Hence, when considering the order of labeled edges around a vertex as a permutation, an adjacent transposition in the permutation can be implemented by three edge slides.
\begin{figure}
\centering
\includegraphics{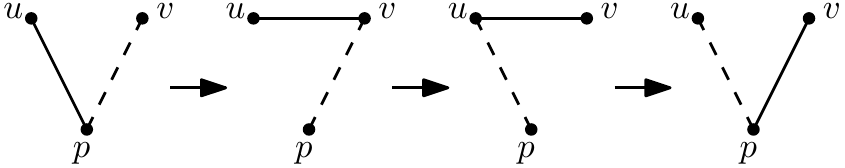}
\caption{Exchanging the labels of two adjacent edges of an empty triangle  using 3 edge slides.}
\label{fig:label change}
\end{figure}

\subsubsection{General Position}

For point sets in general position, we can establish an upper bound of $O(n \log n)$, which matches the current best bound for the unlabeled case.

\begin{theorem}\label{thm:labeled_empty_triangle_rotation_upper}
Every edge-labeled plane tree in $\mathcal{L}(S)$, $|S|=n$, can be transformed into any other
using $O(n \log n)$ empty-triangle rotations.
\end{theorem}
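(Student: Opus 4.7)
My plan is to imitate the three-phase strategy of Theorem~\ref{thm:labeled-rotation}: transform $T_1$ into a canonical tree, permute its labels, and then transform the canonical tree into $T_2$. Pick an extreme point $p\in S$. Applying Theorem~\ref{thm:empty_rotation_general} to the underlying unlabeled tree of $T_1$, I would transform it into a star $T^*$ centered at~$p$ using $O(n\log n)$ empty-triangle rotations, and by reversing the same algorithm applied to $T_2$, another $O(n\log n)$ rotations go from $T^*$ (with an appropriate labeling) to $T_2$. Since each rotation carries its edge label along, the problem reduces to permuting the labels on $T^*$ from the one inherited via $T_1$ to the one required by the reversed algorithm on $T_2$.

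On $T^*$ the edges are $pv_1,\ldots,pv_{n-1}$, listed in the angular order of the $v_j$ around the extreme point $p$. For every $i$, the triangle $\Delta(pv_iv_{i+1})$ is empty of both points of $S$ and of tree edges, because $v_i,v_{i+1}$ are radially adjacent around $p$ and no other point can lie inside their radial wedge. Hence, by the primitive in \figurename~\ref{fig:label change}, the labels on $pv_i$ and $pv_{i+1}$ can be swapped with $3$ empty-triangle rotations. Sorting a permutation of $n-1$ labels with only adjacent swaps costs $\Omega(n^2)$ in the worst case, so additional power is needed: an edge $pv_i$ can also be rotated to the chord $v_{i-1}v_i$ (via the empty triangle $\Delta(pv_{i-1}v_i)$), which restructures the star into a caterpillar and enables labels to propagate further along the hull boundary. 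I plan to schedule these restructurings in a merge-sort pattern on the angular order of $T^*$: recursively sort the labels on the two angular halves and then merge them by interleaving the two sorted sequences, aiming at $O(n)$ empty-triangle rotations per level of recursion and therefore $O(n\log n)$ in total.

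The principal obstacle is executing the merge step in $O(n)$ empty-triangle rotations while preserving the empty-triangle condition throughout. My intended realization is to convert each sorted half into a path $p,v_{j_1},v_{j_2},\ldots$ of radially consecutive chords, advance labels along each path with single adjacent swaps, and exchange labels between the two path heads at $p$ using the three-rotation primitive. Every chord introduced is of the form $v_{i-1}v_i$ between radially consecutive vertices, so every triangle we rotate through has $p$ or two radially adjacent vertices as its apex; by the extremality of $p$ and the radial-adjacency argument above, such triangles are empty throughout. After each merge, $O(n)$ further empty-triangle rotations reverse the restructurings and restore the star with its updated labeling, ready either for the next level of the recursion or for the final reverse transformation into $T_2$.
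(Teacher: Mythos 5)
Your reduction (go to a star at an extreme point $p$ via Theorem~\ref{thm:empty_rotation_general}, then permute labels, then reverse the algorithm for $T_2$) is the same first move as the paper's proof, and the adjacent-swap primitive of \figurename~\ref{fig:label change} is sound on the star: the triangles $\Delta(pv_iv_{i+1})$ for radially consecutive $v_i,v_{i+1}$ are indeed empty. The gap is the merge step, which you yourself flag as the principal obstacle and then only sketch. In your star/caterpillar structure every single empty-triangle rotation moves one edge (hence one label) to an adjacent position, and your transport mechanism is adjacent swaps along paths of radially consecutive chords plus an exchange at the two path heads at $p$. With that mechanism, advancing the next label to a path head shifts labels along a path of length $\Theta(n)$, and in a worst-case merge (e.g., when every label of the right angular half must end up in the left half and vice versa) the total angular displacement of the labels is $\Theta(n^2)$; since no operation you describe moves a label more than $O(1)$ positions per rotation, a single top-level merge already costs $\Omega(n^2)$ rotations, not $O(n)$, and the whole scheme degenerates to the bubble-sort bound you were trying to avoid. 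Nothing in the proposal supplies a way to transport a label a long angular distance in $o(\text{distance})$ rotations, and long rotations about $p$ are blocked by the intervening star edges, so the claimed $O(n)$-per-level merge is unsubstantiated and, as described, false.

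This missing long-distance transport mechanism is exactly what the paper's proof provides by a different choice of canonical structure: it embeds on $S$ a triangulation of a convex polygon whose dual tree has diameter $O(\log n)$ (possible because outerplanar graphs embed straight-line on any point set in general position~\cite{pgmp-eptvsp-91,dual_diameter}), brings both trees to the outer path of this triangulation, and then realizes each transposition by rotating the two labeled edges through $O(\log n)$ triangles to their lowest common ancestor in the dual tree, swapping there with the primitive of \figurename~\ref{fig:label change}; this makes each transposition cost $O(\log n)$ and the $n-2$ transpositions cost $O(n\log n)$. If you want to salvage your merge-sort plan you would need an analogous logarithmic-depth scaffold for label routing; note also that your blanket claim that every triangle you use is empty ``by extremality of $p$'' needs care, since a triangle $\Delta(v_{i-1}v_iv_{i+1})$ on three radially consecutive vertices can be crossed by the star edge $pv_i$ when $v_i$ lies beyond the chord $v_{i-1}v_{i+1}$, so edge-emptiness depends on the exact intermediate configuration and must be verified at each step.
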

We use an idea that has previously been used to show that any point set in general position has a triangulation whose dual graph has its diameter in $O(\log n)$~\cite{dual_diameter}.
\begin{proof}
Let $\tilde T$ be a triangulation of a convex $n$-gon whose dual graph has a diameter of $O(\log n)$.
As every outerplanar graph with $n$ vertices admits a straight-line embedding on every set of $n$ points in general position~\cite{pgmp-eptvsp-91}, we may embed $\tilde T$ on~$S$. Let $T$ be this embedding.
Ignoring the labels, we can transform any tree in $\mathcal{L}(S)$ into the same path $P$ that is on the outer face of $T$ using $O(n\log n)$ empty-triangle rotations (cf.~Theorem~\ref{thm:empty_rotation_general}).
Let $e$ be the unique edge from the outer face of $T$ that is not in $P$.
Now let $a$ and $b$ be two edges of $P$.
We show how to exchange the labels of $a$ and $b$ using $O(\log n)$ edge slides.
In a path, the linear order of the edges corresponds to a permutation of the $n-1$ labels.
Since every permutation can be carried to any other permutation using at most $(n-1)-1=n-2$ transpositions,
we obtain an overall bound of $O(n \log n)$.

Let $t_a$ and $t_b$ be the triangles of $T$ that are incident to $a$ and $b$, respectively.
The dual graph of $T$ is a tree, which we root at the triangle incident to $e$.
Let $t$ be the lowest common ancestor of $t_a$ and $t_b$.
If $t = t_a = t_b$, we are done.
Otherwise, we have, say, $t_a \neq t$.
Then, we rotate $a$ to become another edge of $t_a$;
there is a unique choice for the new edge $a'$, and $a'$ is incident to a triangle that is closer to $e$ (and thus $t$) in the dual. We can thus iteratively bring the edges with the labels of $a$ and $b$ closer to the triangle $t$, until they are incident to it and then we can exchange them using three edge slides as indicated in \figurename~\ref{fig:label change}. As the diameter of the dual of $T$ is in $O(\log n)$, this process takes $O(\log n)$ rotations.
\end{proof}

For simultaneous empty-triangle rotations, we use a different approach. Similarly to the proof of Theorems~\ref{thm:labeled-rotation} and~\ref{thm:labeled_empty_triangle_rotation_upper}, the challenge is to permute the edge labels of a fixed tree in $\mathcal{L}(S)$, and implement the transpositions using empty triangle rotations. For simultaneous operations, we can use parallel sorting algorithms. The \emph{odd-even transposition sort} is a procedure of sorting $n$ elements in $O(n)$ rounds: In even rounds, each element with an even index is compared (and possibly swapped) with its successor, and in odd rounds this is done for each element with odd index (see, e.g.,~\cite[p.~721]{cormen}).

\begin{theorem}\label{thm:labeled_sim_empty_triangle_rotation_upper}
Every edge-labeled plane tree in $\mathcal{L}(S)$, $|S|=n$, can be transformed into any other
using $O(n)$ simultaneous empty-triangle rotations.
\end{theorem}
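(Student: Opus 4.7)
The plan is to reduce the problem to permuting edge labels on a star, via a four-stage strategy. First, I would invoke Theorem~\ref{thm:sim_empty_upper} on the unlabeled tree underlying $T_1$ to obtain a sequence $\sigma_1$ of at most $8n$ simultaneous empty-triangle rotations that transforms it into a star $S^\star$ centered at a fixed extreme point $p\in S$; executing $\sigma_1$ on the labeled tree $T_1$ (carrying labels through each bijection between old and new edges as prescribed in Section~\ref{sec:intro}) yields a labeled star $(S^\star,\lambda_1)$. Second, I would do the same for $T_2$, obtaining a sequence $\sigma_2$ and a labeled star $(S^\star,\lambda_2)$. Third, I would permute the labels on $S^\star$ from $\lambda_1$ to $\lambda_2$ using $O(n)$ simultaneous empty-triangle rotations (the core step, explained below). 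Fourth, I would execute the reverse sequence $\sigma_2^{-1}$ on $(S^\star,\lambda_2)$; since applying $\sigma_2$ to labeled $T_2$ produced $(S^\star,\lambda_2)$, the reverse sequence recovers $T_2$ with its original labels. Stages one, two, and four each contribute $O(n)$ simultaneous empty-triangle rotations by Theorem~\ref{thm:sim_empty_upper} (together with the fact that the inverse of a simultaneous empty-triangle rotation is again one).

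For the third stage, I would index the edges of $S^\star$ in radial order around $p$ as $e_1,\ldots,e_{n-1}$. For two consecutive edges $e_i=pu$ and $e_{i+1}=pv$, the triangle $\Delta(upv)$ is empty of other points of $S$ (because $u$ and $v$ are radial neighbors of $p$) and its interior is not crossed by any edge of $S^\star$; hence the three-slide procedure shown in \figurename~\ref{fig:label change} exchanges the labels of $e_i$ and $e_{i+1}$ using three empty-triangle rotations confined to $\Delta(upv)$. For any collection of disjoint consecutive index pairs $(i_k,i_k+1)$, the corresponding triangles lie in pairwise disjoint angular sectors around $p$ and thus have disjoint interiors, so the three slides of every such swap can be performed in parallel, one simultaneous empty-triangle rotation per slide phase. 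I would then apply odd-even transposition sort to the permutation $\lambda_2\circ\lambda_1^{-1}$: in $n-1$ rounds it sorts any permutation using sets of non-overlapping adjacent transpositions, so Stage three costs $3(n-1)=O(n)$ simultaneous empty-triangle rotations. Summing over the four stages gives $O(n)$ in total.

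The main obstacle I anticipate is verifying that every intermediate tree produced in Stage three lies in $\mathcal{T}(S)$, both across the three slides of a single swap and across concurrent swaps. Between the three slides of one swap the moving edge takes values in $\{pu,uv,pv\}$, all confined to the closed triangle $\Delta(upv)$; for simultaneous swaps, the moving edges belong to distinct angular sectors around $p$, so no crossing can occur between edges involved in different concurrent swaps. A secondary concern is the bookkeeping for transporting labels along $\sigma_1$, $\sigma_2$, and $\sigma_2^{-1}$, but this only requires fixing once and for all the bijections between old and new edges in each round produced by Theorem~\ref{thm:sim_empty_upper} and propagating labels through them.
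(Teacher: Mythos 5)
Your proposal is correct and follows essentially the same route as the paper: both trees are reduced (ignoring labels) to a star centered at an extreme point $p$ via Theorem~\ref{thm:sim_empty_upper}, and the labels are then sorted by odd-even transposition sort, each adjacent swap realized by the three-slide exchange of \figurename~\ref{fig:label change} on radially consecutive edges, with swaps on interior-disjoint triangles performed simultaneously. Your explicit reversal of the second sequence and the verification of disjoint angular sectors are just the details the paper leaves implicit.
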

\begin{proof}
Let $p$ be an extreme point in $S$. Let $T_1,T_2\in \mathcal{L}(S)$. Ignoring the edge labels, we can transform both trees into the same star centered at $p$, using at most $8n$ simultaneous empty-triangle rotations by Theorem~\ref{thm:sim_empty_upper}.
A sequence of 3 edge slides can exchange two edges that are consecutive in the radial order around $p$, as shown in \figurename~\ref{fig:label change}. This corresponds to an adjacent transposition in a permutation. Edge slides in nonadjacent triangles can be performed simultaneously. We can sort the labels using odd-even transposition sort. Thus, in $O(n)$ rounds, each of which can be implemented using 3 simultaneous edge slides, the labels are sorted. Overall we use only $O(n)$ simultaneous empty-triangle rotations to transform one edge-labeled spanning tree into any other.
\end{proof}

\subsubsection{Convex Position}

\begin{propo}\label{pro:labeled-cx-emptytriangle}
Every edge-labeled plane tree in $\mathcal{L}(S)$, for a set $S$ of $n\geq 3$ points in convex position, can be transformed into any other using at most $6n-13$ empty-triangle rotations, or at most $O(\log n)$ simultaneous empty-triangle rotations.
\end{propo}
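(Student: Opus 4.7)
Both bounds rely on bringing each of $T_1$ and $T_2$ to a single \emph{labeled} canonical tree: the convex hull path $P=(v_1,\ldots,v_n)$ equipped with a fixed canonical edge labeling. The structural reduction (Theorem~\ref{thm:slide_upper_convex} in the single setting, Theorem~\ref{thm:sim_empty_upper_convex} in the simultaneous setting, using that every edge slide is an empty-triangle rotation) is \emph{intertwined} with label placement, so that one does not pay the cost of a separate label-permutation phase.

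For the single-operation bound, the plan is to place the labels $1,2,\ldots,n-1$ in order along $P$. At step $i$, the first $i-1$ positions of $P$ are already fixed with their canonical labels, and the remainder of the tree may still be arbitrary. I locate the edge currently carrying the target label for position $v_iv_{i+1}$ and route it there using a bounded number of ETRs, following the scheme in the proof of Theorem~\ref{thm:labeled-rotation}: pull the two affected edges to chords sharing the extreme vertex $v_1$, perform the three-slide label swap at the empty triangle $\Delta(v_1 v_k v_{k+1})$ (as in Figure~\ref{fig:label change}), and push back. The convex position guarantees that the triangles $\Delta(v_1 v_k v_{k+1})$ and $\Delta(v_1 v_{k+1} v_j)$ arising throughout are empty of both points and tree edges, so every rotation in the scheme is a valid ETR. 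Careful counting---exploiting that (a) transpositions already incident to $v_1$ or involving adjacent hull edges are cheaper, (b) the very first step subsumes the initial structural reduction, and (c) once the first $n-3$ positions are placed the remaining two labels are corrected by at most one swap---yields the total $6n-13$.

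For the simultaneous $O(\log n)$ bound, apply Theorem~\ref{thm:sim_empty_upper_convex} to achieve the unlabeled reduction in $O(1)$ rounds, leaving a pure label-permutation task on $P$. I plan to carry this out by a recursive-halving scheme analogous to Theorem~\ref{thm:sim_slide_upper_convex}: in each round, split the current subpath via a chord from $v_1$ into two halves and simultaneously correct, using disjoint empty triangles, a constant fraction of labels whose source and target lie on opposite halves; then recurse on each half. Because disjoint triangles permit simultaneous ETRs, each round costs $O(1)$ simultaneous operations, and $O(\log n)$ rounds suffice.

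\paragraph{Main obstacle.}
The harder task is the simultaneous $O(\log n)$ bound: matching it requires a recursion that guarantees constant-fraction progress in \emph{disjoint} empty triangles per round, since direct parallel-sorting approaches such as odd-even transposition give only $\Theta(n)$ depth and bitonic-style networks give $\Theta(\log^2 n)$. The single-operation bound is essentially bookkeeping once the transposition scheme from Theorem~\ref{thm:labeled-rotation} is transferred to the convex setting; the tight constant $6n-13$ comes from carefully saving operations at the boundary cases described above.
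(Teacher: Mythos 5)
Both halves of your plan have genuine gaps. For the single-operation bound of $6n-13$, you transplant the transposition scheme of Theorem~\ref{thm:labeled-rotation} (route an affected edge to chords through the extreme vertex $v_1$, swap, route back), which costs $7$ rotations per transposition; together with the structural reduction of both trees to the hull path (at least $2n-5$ operations, or $4(n-2)$ via the star), this gives roughly $9n-19$ or $11n-22$, and the savings you invoke in (a)--(c) are never counted and cannot close that gap in the worst case, since a worst-case permutation forces $\Theta(n)$ transpositions none of which are of the ``cheap'' kinds you list. The missing idea is much simpler: in convex position any two edges of $\conv(S)$ span a convex quadrilateral (or triangle) that is empty of points and of edges of the hull path, so an \emph{arbitrary} pair of labels on the path can be swapped with at most $4$ empty-triangle rotations (cf.\ \figurename~\ref{fig:label change2}), with no routing through $v_1$ at all; then $n-2$ transpositions cost $4(n-2)$, and $(2n-5)+4(n-2)=6n-13$ exactly.

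For the simultaneous $O(\log n)$ bound, you correctly identify that odd-even or bitonic sorting is too slow, but your recursive step is not specified in a way that works: you propose to correct only ``a constant fraction of labels whose source and target lie on opposite halves'' before recursing, yet the recursion on the two halves is only valid if \emph{all} cross-half labels have been exchanged at that level; fixing only a constant fraction either invalidates the recursion or forces $\omega(1)$ rounds per level. The crux you are missing is a disjointness argument that lets all cross-half exchanges happen in $O(1)$ simultaneous rounds: letting $M_1$ (resp.\ $M_2$) be the edges of the left (resp.\ right) half whose labels belong to the other half, one has $|M_1|=|M_2|$, and matching the $i$th edge of $M_1$ from the left with the $i$th edge of $M_2$ from the right makes the convex hulls of the matched pairs nested and hence pairwise interior-disjoint, so four simultaneous empty-triangle rotations perform all these swaps at once; recursing on both halves in parallel then gives $O(\log n)$ rounds. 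Without that (or an equivalent) pairing rule, your ``disjoint empty triangles'' claim is unsupported, and chords from $v_1$ are neither needed nor obviously compatible with keeping the intermediate trees equal to the hull path.
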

\begin{proof}
Let $T_1,T_2\in \mathcal{L}(S)$. Ignoring the labels, we can transform both trees into the same (unlabeled) path $P$ along the boundary of $\conv(S)$ using at most $2n-5$ empty-triangle rotations, as in the proof of Theorem~\ref{thm:slide_upper_convex}.
Two edges, say $uv$ and $u'v'$, on the boundary of $\conv(S)$ span an empty quadrilateral or triangle. They can be exchanged using at most 4 empty-triangle rotations as indicated in \figurename~\ref{fig:label change} (in an empty triangle), or \figurename~\ref{fig:label change2} (in an empty quadrilateral). An exchange corresponds to a transposition in the permutation of labels along $P$. Therefore $4(n-2)$ empty-triangle rotations can transform one labeled path into the other. Overall, we use at most $(2n-5)+4(n-2)=6n-13$ operations.

\begin{figure}
\centering
\includegraphics{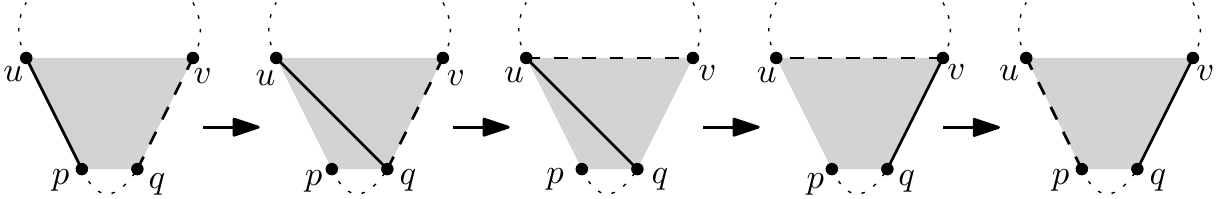}
\caption{Exchanging the labels of two nonadjacent edges of a path in convex position.}
\label{fig:label change2}
\end{figure}

\medskip
Ignoring the labels, we can transform both trees into the same (unlabeled) path~$P$ along the boundary of $\conv(S)$ using at most $4$ simultaneous empty-triangle rotations by Theorem~\ref{thm:sim_empty_upper_convex}.
We obtain two labeled paths $P_1,P_2\in \mathcal{L}(S)$. It remains to permute the labeled edges of the paths. We show below that this can be accomplished using $O(\log n)$ simultaneous empty-triangle rotations, with a variant of parallel quicksort. As noted above, each transposition can be implemented using at most 4 empty-triangle rotations. Importantly, we can perform transpositions simultaneously if the convex hulls of the exchanged edges are pairwise interior-disjoint.

We divide $P$ into two (unlabeled) paths, $P_{\rm left}$ and $P_{\rm right}$, of length $\lfloor (n-1)/2\rfloor$ and $\lceil n/2\rceil$, respectively. Let $M_1$ be the set containing every edge in $P_1$ that lies in $P_{\rm left}$ but the corresponding edge in $P_2$ with the same label is in $P_{\rm right}$. Similarly, let $M_2$ be the set of every edge in $P_1$ that lies in $P_{\rm right}$ but the edge in $P_2$ with the same label is in $P_{\rm left}$. Note that $|M_1|=|M_2|$. Consequently there is a perfect matching between $M_1$ and $M_2$; we choose a perfect matching such that
the $i$th edge from the left in $M_1$ is matched to the $i$th edge from the right in $M_2$, for $i=1\ldots, |M_1|$.
This ensures that the convex hulls of distinct pairs are interior-disjoint. Consequently, four simultaneous rotations can exchange the edges $M_1$ and $M_2$ in $P_1$. Recursion on the first and second halves of $P$, respectively, can be performed simultaneously, and so the sorting algorithm terminates after $O(\log n)$ iterations, and transforms $P_1$ into $P_2$.
\end{proof}

\subsection{Edge Slide}

\subsubsection{General Position}

\begin{theorem}\label{thm:labeled_edge_slides_upper}
Every edge-labeled plane tree in $\mathcal{L}(S)$, $|S|=n\geq 2$, can be transformed into any other
using $O(n^2)$ edge slides. 
\end{theorem}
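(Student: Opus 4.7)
The plan is a two-phase strategy analogous to the proofs of Theorems~\ref{thm:labeled-rotation} and~\ref{thm:labeled_sim_empty_triangle_rotation_upper}: first drive both trees to a common unlabeled canonical form, and then permute the labels in place using the three-slide swapping gadget. Fix an extreme point $p\in S$ and let $S_p$ denote the noncrossing spanning star centered at $p$. Given $T_1,T_2\in \mathcal{L}(S)$, I would first ignore labels and transform each of $T_1,T_2$ into $S_p$ using $O(n^2)$ edge slides, invoking the bound $f_{\es}(n)=O(n^2)$ of Aichholzer and Reinhardt~\cite{aichholzer2007quadratic}. This reduces the problem to realizing an arbitrary permutation $\pi$ of the $n-1$ labels on the edges of $S_p$.

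For the label-sorting phase, I would reuse the three-slide gadget of \figurename~\ref{fig:label change} already used for the analogous labeled empty-triangle-rotation theorems. Order the edges of $S_p$ as $pv_1,\ldots,pv_{n-1}$ in the angular order around $p$, and consider two angularly consecutive edges $pv_i,pv_{i+1}$. Because $p$ is an extreme point, any point of $S\setminus \{p,v_i,v_{i+1}\}$ lying inside the triangle $\Delta(v_ipv_{i+1})$ would be strictly between the rays $pv_i$ and $pv_{i+1}$, contradicting their consecutivity in the angular order; hence $\Delta(v_ipv_{i+1})$ is empty. The gadget therefore applies: three successive edge slides performed inside this triangle swap the labels of $pv_i$ and $pv_{i+1}$ and restore $S_p$ as the underlying unlabeled tree, with every intermediate graph remaining a noncrossing spanning tree since only the three sides of the empty triangle are ever modified and the rest of the star is untouched.

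Each such swap realizes an adjacent transposition of the label sequence along $S_p$, so an arbitrary permutation $\pi$ of the $n-1$ labels can be realized by at most $\binom{n-1}{2}=O(n^2)$ adjacent transpositions (e.g., via bubble sort), contributing $O(n^2)$ edge slides in the second phase. Together with the two unlabeled canonical-form reductions, this yields the claimed $O(n^2)$ bound. The point I expect to require the most care is verifying that at each of the three steps of the swapping gadget the required ``third edge'' of the triangle is indeed present in the current tree and the gadget genuinely only perturbs the three edges on the boundary of $\Delta(v_ipv_{i+1})$; once this is checked, the remainder of the argument follows the same template as the corresponding labeled empty-triangle-rotation proofs.
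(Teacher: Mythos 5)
Your proposal is correct and follows essentially the same route as the paper's proof: both reduce the two trees (ignoring labels) to the star centered at an extreme point using the $O(n^2)$ bound of Aichholzer and Reinhardt, and then realize the label permutation by bubble sort, implementing each adjacent transposition with the three-slide gadget of \figurename~\ref{fig:label change}. Your extra verification that angularly consecutive star edges at the extreme point span an empty triangle is a welcome detail that the paper leaves implicit.
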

\begin{proof}
Let $p$ be an extreme point in $S$.
Ignoring the edge labeling, we can transform both trees into the same star centered at $p$, using $O(n^2)$ edge slides~\cite{aichholzer2007quadratic}.
In a star, the radial order of the edges around $p$ corresponds to a permutation of the $n-1$ labels.
We sort this permutation using the adjacent transposition of \figurename~\ref{fig:label change}.
Every permutation can be carried to any other permutation using at most $O(n^2)$ adjacent transpositions (e.g., by the bubble sort algorithm). Overall, we use $O(n^2)$ edge slide operations.
The matching lower bound $\Omega(n^2)$ follows from the unlabeled version~\cite{aichholzer2007quadratic}.
\end{proof}

Note that if two trees are stars with the same center, we can apply $O(n)$ simultaneous edge slides to obtain an identical labeling; we can exchange neighboring labels in the manner of odd-even transposition sort, which is known to finish after $O(n)$ rounds. However, we do not know whether a star can be obtained using $o(n^2)$ simultaneous edge slides.

\subsubsection{Convex Position}

\begin{theorem}\label{thm:labeled-edgeslide-cx-upper}
Every edge-labeled plane tree in $\mathcal{L}(S)$, for a set $S$ of $n$ points in convex position, can be transformed into any other using $O(n\log n)$ edge slides, or $O(n)$ simultaneous edge slides.
\end{theorem}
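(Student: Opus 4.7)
The plan is to reduce both bounds to sorting a permutation of edge labels on a common canonical unlabeled tree, using the unlabeled bounds from Theorems~\ref{thm:slide_upper_convex} and~\ref{thm:sim_slide_upper_convex}.

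For the $O(n)$ simultaneous bound, I would first transform both $T_1$ and $T_2$ into the same star centered at an extreme point $p\in S$ using $O(\log n)$ simultaneous edge slides each, by Theorem~\ref{thm:sim_slide_upper_convex}. The resulting labeled stars agree as unlabeled trees, so they differ only by a permutation of the labels on the $n-1$ edges incident to $p$. As illustrated in Figure~\ref{fig:label change}, the labels of two consecutive edges in the radial order around $p$ can be exchanged using 3 edge slides, and disjoint such exchanges can be performed simultaneously, since the associated empty triangles share only the point $p$. Applying odd-even transposition sort, which sorts any permutation of $n-1$ elements in $O(n)$ parallel rounds, completes the label rearrangement in $O(n)$ simultaneous edge slides.

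For the $O(n\log n)$ sequential bound, a star is not an efficient intermediate structure, since sorting a permutation by adjacent transpositions alone requires $\Theta(n^2)$ sequential steps in the worst case. The plan is to instead transform both $T_1$ and $T_2$ into a common canonical tree $T^{*}$ of diameter $O(\log n)$, analogous to the intermediate tree $T_3$ constructed in the proof of Theorem~\ref{thm:sim_slide_upper_convex}. Since the $O(\log n)$ simultaneous rounds in that proof each comprise $O(n)$ individual edge slides, the reduction uses $O(n\log n)$ sequential slides. To sort the labels on $T^{*}$, I would route each label along a path of length $O(\log n)$ in $T^{*}$, using $O(1)$ edge slides per step along the path, for a total of $O(n\log n)$ sequential edge slides.

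The main obstacle will be formalizing the label-routing step on $T^{*}$, since each edge slide modifies the tree structure and may therefore interfere with the routes of other labels. I would handle this by processing labels in a bottom-up order on $T^{*}$ (leaves first, working inward): once the correct label has been routed to a leaf edge, that edge is ``frozen'' and never touched again, and the remaining subtree retains diameter $O(\log n)$. The convex-position hypothesis plays a crucial role throughout, since every triangle spanned by three points of $S$ is empty of other points of $S$, so any two tree edges sharing a common vertex can participate in an edge slide as long as the third side of the triangle belongs to the current tree.
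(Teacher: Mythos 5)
Your overall strategy is sound and matches the paper's in both parts: transform both trees to a common canonical unlabeled tree, then implement a sort of the edge labels by $3$-slide exchanges. The simultaneous $O(n)$ argument is essentially identical to the paper's (same star, same odd-even transposition sort, same observation that exchanges at a star center span triangles sharing only the center). The sequential $O(n\log n)$ argument also captures the right idea --- use a canonical tree of diameter $O(\log n)$ so that the tree-path between any two edges has length $O(\log n)$ --- and your observation that convex position makes any triangle $\Delta(pqr)$ with $pq,qr\in T$ automatically free of tree edges is exactly the reason why the specific shape of the canonical tree is flexible.

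A few details of your sequential argument are slightly off and worth fixing. First, the tree $T_3$ in the proof of Theorem~\ref{thm:sim_slide_upper_convex} depends on the input tree, so it is not a canonical target; you need to fix one tree $T^*$ of depth $O(\log n)$ in advance. The paper builds an explicit balanced binary tree from the boundary of $\conv(S)$ so that every vertex/parent/grandparent triple spans an empty triangle. Second, you can reach $T^*$ with $O(n)$ (not $O(n\log n)$) sequential slides simply by invoking Theorem~\ref{thm:slide_upper_convex}, which bounds the distance between \emph{any} two trees on a convex point set by $2n-5$; there is no need to serialize the simultaneous phases. Third, your ``bottom-up routing with frozen leaves'' bookkeeping is an unnecessary complication: a single $3$-slide exchange (Figure~\ref{fig:label change}) leaves the tree shape unchanged and only transposes the labels of two adjacent edges, so composing $O(\log n)$ such exchanges along a tree path implements a transposition of any two labels without ever altering $T^*$. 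Since any permutation of $n-1$ labels is a product of at most $n-2$ transpositions, $O(n)\cdot O(\log n)=O(n\log n)$ slides suffice, and no ordering or freezing argument is needed. With these corrections, your proof becomes the paper's proof.
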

\begin{proof}
We use $O(n)$ edge slides to transform both trees into a canonical one (defined below) by Theorem~\ref{thm:slide_upper_convex}, and then adjust the labels. We define a canonical tree as follows.
Let $r\in S$ be an arbitrary vertex of $\conv(S)$; $r$ will be the root of the canonical tree.
Let $\{p_1, \dots, p_{n-1}\}$ be the set of remaining vertices indexed in counterclockwise radial order around~$r$.
Add the edge $r p_{\ceil{(n-1)/2}}$. The supporting line of  $r p_{\ceil{(n-1)/2}}$ splits the set of remaining vertices into two subsets, $\{p_1, \ldots, p_{\ceil{(n-1)/2}}\}$ and $\{p_{\ceil{(n-1)/2}}, \ldots, p_{n-1}\}$.
In each subset, we designate $p_{\ceil{(n-1)/2}}$ as the root and recurse until all points are
connected to the tree. This recursive algorithm constructs a rooted binary tree of height $O(\log n)$
such that a vertex, its parent, and its grandparent form an empty triangle.
Thus, by three edge slides (cf.~\figurename~\ref{fig:label change}), we can exchange the
label of an edge between a vertex and its parent with the label of the edge between its
parent and grandparent.
Consequently, using $O(\log n)$ edge slides, we can exchange the labels of any two edges.
Overall, it takes $O(n \log n)$ edge slides to permute the labels, as claimed.

\medskip
For simultaneous edge slides, we can use an approach similar to Theorem~\ref{thm:labeled_empty_triangle_rotation_upper}.
We transform the two trees into the same star using $O(\log n)$ simultaneous edge slides by Theorem~\ref{thm:sim_slide_upper_convex}.
Then, we can swap the labels of two edges that are consecutive in the radial order around the center of the star
as shown in \figurename~\ref{fig:label change}.
As we can use edge slides to change the labels of consecutive edges, we can permute the labels as in odd-even transposition sort.
This results in the desired permutation of the labels after $O(n)$ rounds.
\end{proof}

We prove a lower bound of $\Omega(n\log n)$ using a technique developed by Sleator, Tarjan, and Thurston~\cite{STT92}, which was used for establishing a lower bound of $\Omega(n\log n)$ for the diameter of flip graphs over $n$-vertex triangulations~\cite{STT92}.
Bose et al.~\cite{BLPV18} extended the technique to derive the same lower bound for edge-labeled triangulations on $n$ points in convex position.
In a nutshell, Sleator, Tarjan, and Thurston~\cite{STT92} show that, given a graph with $n$ vertices and an operation that can replace any subgraph isomorphic to some connected graph of size $O(1)$ with some other subgraph of size $O(1)$, then $d$ successive operations can produce at most $2^{O(n+d)}$ distinct graphs. The same technique works for edge-labeled graphs. Since every plane tree with $n$ vertices has $(n-1)!=2^{\Theta(n\log n)}$ edge labelings, at least $d=\Omega(n \log n)$ operations are needed to reach all possible edge labelings.

We note that the transition graph $\mathcal{G}_{\es}^L(S)$, where $S$ is a set of $n$ points in convex position, is $(2n-4)$-uniform (see Lemma~\ref{lem:uniform} below). So the naive bound on the $d$-neighborhood of a node in $\mathcal{G}_{\es}^L(S)$ is $O(n^d)=2^{O(d\log n)}$, which would give only a trivial lower bound of $\diam(\mathcal{G}_{\es}^L(S))\geq \Omega(n)$.
The new insight is that many sequences of edge slides lead to the same output;
in particular, Lemma~\ref{lem:simulate} implies that if $k$ edge slides can be performed simultaneously, then they can also be performed sequentially in any of the $k!$ possible orders, producing the same output.

\begin{lemma}\label{lem:uniform}
Let $S$ be a set of $n\geq 3$ points in convex position.
Then the transition graph $\mathcal{G}_{\es}^L(S)$ is $2(n-2)$-uniform.
\end{lemma}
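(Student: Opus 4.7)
My plan is to count the neighbors of any $T\in\mathcal{L}(S)$ in $\mathcal{G}_{\es}^L(S)$ directly. Every edge slide out of $T$ is determined by a triangle $\Delta(pqr)$ two of whose sides, $pq$ and $qr$, are edges of $T$ and whose interior contains no edge of $T$, together with a choice of which of those two sides to remove; the third side $pr$ is added and inherits the label of the removed edge. Conversely, every such (triangle, removed-edge) pair gives a valid slide, since in the resulting graph the only new edge is $pr$, which lies on the boundary of the (empty) triangle and thus introduces no crossing.

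The technical heart of the proof is a characterization of admissible triangles in convex position: I will show that $\Delta(pqr)$ with $pq,qr\in T$ is empty of tree edges in its interior if and only if $pq$ and $qr$ are consecutive in the angular order of tree edges around $q$. The ``only if'' direction is immediate, since any tree edge $qw$ with $w$ angularly between $p$ and $r$ at $q$ penetrates the triangle interior. The ``if'' direction is the main obstacle and requires ruling out every other tree edge entering the interior. A tree edge not incident to $\{p,q,r\}$ would have to cross $pq$ or $qr$, both of which lie in $T$; this is impossible. A tree edge $pw$ incident to $p$ and entering the triangle forces $w$ to lie strictly between $q$ and $r$ on the arc not through $p$, making $pw$ cross the tree edge $qr$; the case at $r$ is symmetric. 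These are short chord-crossing arguments that use only convex position and the fact that $pq,qr\in T$.

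With the characterization in hand, the count becomes a handshake calculation. At each vertex $q\in S$ the tree edges incident to $q$ all lie in an open half-plane (since $q$ lies on $\conv(S)$) and hence admit a linear angular order; the number of consecutive pairs at $q$ is $\max(d_q-1,0)$, and every vertex in a spanning tree on $n\geq 2$ vertices has $d_q\geq 1$. Summing, I obtain
\[
\sum_{q\in S}(d_q-1)=2(n-1)-n=n-2
\]
admissible triangles. Each one contributes exactly two slides, one for each choice of removed edge, and these produce pairwise distinct labeled trees: within a single triangle the two slides differ in which underlying edge they remove (and therefore also in the label assigned to the new edge $pr$), while slides from different triangles are distinguished by the ordered pair (removed edge, added edge), which uniquely recovers the triangle and the choice. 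Thus $T$ has exactly $2(n-2)$ neighbors, proving that $\mathcal{G}_{\es}^L(S)$ is $2(n-2)$-uniform.
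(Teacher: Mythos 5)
Your proof is correct, and it reaches the count by a somewhat different route than the paper. The paper first treats a spanning path along $\conv(S)$ and then handles a general tree through the dual-tree cell decomposition: every slide is determined by an ordered pair of edges that are consecutive along the boundary of some cell, a cell with $n_i$ vertices contributes $2(n_i-2)$ such pairs, and Lemma~\ref{lem:sum} sums these to $2(n-2)$. You instead parametrize slides by empty triangles having two tree sides together with the choice of which side to remove, prove an explicit geometric criterion (in convex position, the triangle over two tree edges meeting at $q$ has interior free of tree edges if and only if the two edges are angularly consecutive at $q$), and then count consecutive pairs vertex by vertex via the degree sum $\sum_{q}(d_q-1)=2(n-1)-n=n-2$. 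The two bookkeepings count the same objects---a pair of tree edges consecutive in the angular order at a hull vertex is exactly a pair consecutive along the boundary of the cell between them---so the arguments agree in substance; yours is more self-contained (no dual tree, no Lemma~\ref{lem:sum}) and makes the emptiness criterion explicit, whereas the paper's version reuses the cell machinery already set up for Theorem~\ref{thm:slide_upper_convex}. One small point you should make explicit in the converse direction: besides noncrossing, the result of a slide must again be a spanning tree; this is immediate because deleting $pq$ separates $p$ from $q$, the surviving edge $qr$ places $r$ in the component of $q$, and adding $pr$ therefore reconnects the two components. Your distinctness argument (the two slides of one triangle differ in edge sets, and the pair of removed/added edges recovers the triangle) correctly ensures that the $2(n-2)$ slides yield $2(n-2)$ distinct labeled neighbors, matching the uniformity claim.
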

\begin{proof}
Let $P$ be a spanning path on the boundary of $\conv(S)$. Then an edge of $P$ can slide along another edge if and only if the two edges are adjacent. There are precisely $n-1$ edges and $n-2$ (unordered) pairs of adjacent edges in $P$. Hence there are $2(n-2)$ ordered pairs of adjacent edges $(uv,vw)$, which determine an edge slide from $uv$ to $uw$. All $2(n-2)$ possible edge slide operations produce distinct trees.

Let $S$ be a set of $n\geq 3$ points in convex position and $T\in \mathcal{T}$ be an arbitrary tree. Assume that the dual tree has $t$ nodes for some $1\leq t\leq n-3$. Denote the corresponding cells by $C_0,\ldots , C_{t-1}$, and let $n_i$ be the number of vertices of cell $C_i$ for $i\in \{0,\ldots ,t-1\}$. Every edge slide of $T$ is determined by an ordered pair of consecutive edges along the boundary of one of the cells. Any two cells share at most one edge, so they determine distinct pairs. The vertices in each cell induce a path, which yields $2(n_i-2)$ such pairs. By Lemma~\ref{lem:sum}, the overall number of edge slides is $\sum_{i=1}^{t-1}2(n_i-2)=2(n-2)$, as claimed.
\end{proof}

We review terminology from~\cite{BLPV18} and~\cite{STT92}. A \emph{half-edge} in a graph is an incident vertex-edge pair.
Let $\Delta$ be an absolute constant. A graph $G$ with $n$ vertices and maximum degree $\Delta$ is \emph{half-edge tagged}
if every half-edge of $G$ is labeled (\emph{tagged}) by an integer in $\{1,\ldots , \Delta\}$ such that
at each vertex the half-edges have distinct labels.\footnote{We call labels of the half-edges \emph{tags}, consistently with~\cite{STT92}, to distinguish them from the edge labels of trees in $\mathcal{S}$.}
Recall that the \emph{incidence graph} $I(G)$ of a graph $G=(V,E)$ is a bipartite graph where the partite sets correspond to $V$ and $E$, and an edge represents a vertex-edge incidence. An \emph{half-edge tagged graph part} is a set of vertices, edges, and half-edges of $G$ that correspond to a connected subgraph in $I(G)$ such that for every vertex in $V$, it contains all incident half-edges (but it need not contain both half-edges of an edge).

Local modifications of a half-edge tagged graph are represented by a so-called graph grammar.
A \emph{graph grammar} $\Gamma$ is a finite set of production rules $\{L_i \rightarrow_i R_i\}$, where the $i$th production rule comprises the \emph{left side} $L_i$, the \emph{right side} $R_i$ and the \emph{correspondence} $\rightarrow_i$.
Every left side $L_i$ and right side $R_i$ are incidence-labeled graph parts that have the same number
of mismatched half-edges. The correspondence $\rightarrow_i$ bijectively maps the half-edges of $L_i$ to those of
$R_i$ (however, the corresponding half-edges may have different tags).
Such a production $L_i \rightarrow_i R_i$ applies to a graph $G$ if $I(G)$ contains
a subgraph isomorphic to $I(L_i)$, including the half-edge labels.
We apply the production by replacing an occurrence of $L_i$ with $R_i$ accordingly.
A \emph{derivation} is a sequence of graphs $G = G_0, G_1, \ldots, G_m = G'$ such that each
$G_i$ is obtained from $G_{i-1}$ by applying a production rule.

\begin{theorem}[{Sleator, Tarjan, and Thurston~\cite[Theorem~2.3]{STT92}}]\label{thm:short_encodings}
Let $G$ be a graph of $n$ vertices, $\Gamma$ be a graph grammar,
$c$ be the total number of vertices in all left sides of $\Gamma$,
and $r$ be the maximum number of vertices in any right side of $\Gamma$.
Let $R(G, \Gamma, m)$ be the set of graphs obtainable from $G$ by derivations in $\Gamma$ of length at most~$m$.
Then $|R(G, \Gamma, m)| \leq (c+1)^{n+rm}$.
\end{theorem}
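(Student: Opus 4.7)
The plan is to establish the bound by constructing an injective encoding $\phi\colon R(G,\Gamma,m) \to A^{n+rm}$ into strings of length $n+rm$ over an alphabet $A$ of size $c+1$. Once such an encoding exists, the counting bound $|R(G,\Gamma,m)| \leq (c+1)^{n+rm}$ is immediate. The graph $G'\in R(G,\Gamma,m)$ will be described not by itself but by the pair $(G, D)$, where $D=(L_{i_1}\to R_{i_1},\ldots, L_{i_k}\to R_{i_k})$ is some fixed derivation in $\Gamma$ of length $k\le m$ that produces $G'$ from $G$; since $G$ is fixed, only $D$ varies with $G'$, and our task reduces to encoding $D$ compactly.

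The first step is to describe $G$ itself. Because $G$ is half-edge tagged with a constant-degree bound $\Delta$, each vertex's local neighborhood (together with the tags of its incident half-edges) can be written down using a single symbol drawn from an alphabet of size at most $c+1$; so the whole of $G$ takes $n$ symbols. Next, I would traverse the derivation one production at a time, maintaining a ``current pointer'' into the evolving graph $G_{j-1}$. To apply $L_{i_j}\to R_{i_j}$ one must specify (i) which production is used and (ii) where the left side $L_{i_j}$ sits inside $G_{j-1}$. The STT trick is to localize the application relative to the current pointer via a depth-first exploration of $G_{j-1}$ guided by the half-edge tags: because tags at each vertex are distinct and the left side has a bounded ``profile'' of vertices and incident half-edges, the position can be encoded using a number of symbols proportional to the number of vertices in $L_{i_j}$. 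Summing across the whole derivation, the total cost contributed by all left sides is bounded by $c$ plus the cost of advancing through $R_{i_j}$, and amortizing shows that each production contributes at most $r$ symbols (one per vertex of its right side) over the alphabet $A$. After applying the production, we update the current pointer to lie inside $R_{i_j}$, ready for the next step.

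Putting the two parts together yields an encoding of length $n+rm$. The main technical obstacle lies precisely in this amortization: the graph's vertex set changes with every production, so one must verify that the ``pointer-advance plus local-rewrite'' scheme can be carried out with a constant-size control alphabet and remains decodable. Concretely, one shows by induction on $j$ that from the prefix of the encoding corresponding to $G,D\!\restriction_{\le j}$ one can unambiguously reconstruct $G_j$ together with the current pointer, and hence decode the next $r$ symbols as a unique production application. The injectivity of $\phi$ then follows at once, and the theorem is proved.
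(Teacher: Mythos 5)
First, a point of reference: the paper does not prove this statement at all --- it is imported verbatim as Theorem~2.3 of Sleator, Tarjan, and Thurston~\cite{STT92} and used as a black box. So there is no ``paper's own proof'' to compare against; your sketch has to be measured against the original argument in~\cite{STT92}.

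Measured that way, there are genuine gaps. The very first step is off: since $G$ is fixed in the statement, there is nothing to encode about $G$ itself, and in any case an arbitrary (half-edge tagged, bounded-degree) graph cannot be described by independent per-vertex symbols from an alphabet of size $c+1$ --- adjacency information between vertices cannot be captured one vertex at a time over a constant alphabet unrelated to $n$. In the actual argument the $n$ in the exponent has a different origin: every vertex that ever exists during the derivation (the $n$ vertices of $G$ plus the at most $r$ vertices of the right side of each of the at most $m$ productions, i.e.\ at most $n+rm$ vertices in total) receives a single symbol recording which left-side vertex it is matched to when it is later consumed by a production application, or a blank symbol if it is never consumed; since all left sides together have $c$ vertices, the alphabet has size $c+1$, and one then shows that this labelling determines the resulting graph, giving $(c+1)^{n+rm}$ directly. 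Your replacement for this --- a ``current pointer'' plus a tag-guided depth-first localization of each occurrence of $L_{i_j}$, amortized to $r$ symbols per production --- is precisely the step that carries the whole weight of the theorem, and it is asserted rather than proved; you even flag the amortization as the main obstacle without resolving it. As described it does not obviously work: specifying where $L_{i_j}$ occurs inside $G_{j-1}$ by walking from a pointer requires a number of symbols proportional to the length of the walk, which can be comparable to the size of the current graph, not to the constant $|L_{i_j}|$, and nothing in the proposal bounds it. So the injective encoding into $(c+1)^{n+rm}$ strings, which you correctly identify as the goal, is not actually constructed; the missing idea is the Sleator--Tarjan--Thurston device of distributing the ``where does the next production apply'' information onto labels assigned to vertices at their creation, rather than spending symbols at application time.
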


The above theorem continues to hold if each vertex of $G$ has a label, as pointed out in~\cite[Section~3.4]{STT92}.
We cannot apply the machinery of~\cite{STT92} to the plane spanning trees in $\mathcal{L}(S)$ directly, since they
have unbounded degree. We apply it for a variant of the line graphs defined as follows.

Let $S$ be a set of $n\geq 3$ points in convex position. For every plane spanning tree $T=(S,E)\in \mathcal{T}(S)$, we define a \emph{reduced line graph} $G_T$ in the following way.
Recall the definition of cells in Section~\ref{convex and er}.
\begin{itemize}
\item The vertices of $G_T$ are the edges of $T$.
\item Let $e$ and $f$ be two edges of $T$. In $G_T$, there is an edge between the vertices $e$ and $f$ if and only if $e$ and $f$ are adjacent edges along the boundary of some dual cell of~$T$.
\end{itemize}
By definition, $G_T$ is a subgraph of the line graph of $T$.
Note that $G_T$ has precisely $n-1$ vertices, and, by Lemma~\ref{lem:uniform}, precisely $n-2$ edges.
The maximum degree of a vertex is $4$, as any edge in $T$ is incident to two vertices and at most two cells.
When $T\in \mathcal{L}(S)$ is an edge-labeled spanning tree, we label the vertices of $G_T$ with the edge labels of~$T$.

We also define \emph{valid} half-edge tags for $G_T$ as follows. Let $\Delta=4$. For every edge $e$ of $T$, let $C(e)$ be the union of (up to two) dual cells whose boundary contains $e$. Note if $e$ and $f$ are adjacent, then $f$ is an edge of $C(e)$.
In the reduced line graph $G_T$, we label the half-edges of $e$ by integers $1,\ldots, \deg_{G_T}(e)$ in counterclockwise
order of the neighbors of $e$ along $C(e)$; see \figurename~\ref{fig:slide_production} for examples.
Thus each vertex $e$ of $G_T$ has $\deg_{G_T}(e)\leq \Delta$ valid half-edge tags.

In general, we cannot reconstruct the tree $T$ from its half-edge tagged reduced line graph $G_T$, since neither $G_T$ nor the half-edge tags contain enough information to identify the points in $S$. Nevertheless, we show that at most $2n$ trees in $\mathcal{L}(S)$ produce a given half-edge tagged reduced line graph.

\begin{lemma}\label{lem:reconstruct}
Let $S$ be a vertex set of a regular $n$-gon for $n\geq 3$. For every edge-labeled tree $T=(S,E)\in \mathcal{L}(S)$, we can reconstruct $T$ from the half-edge tagged reduced line graph $G_T$ up to rotations and reflections of $S$. Consequently, $\mathcal{L}(S)$ produces at least $|\mathcal{L}|/(2n)$ distinct half-edge tagged reduced line graphs.
\end{lemma}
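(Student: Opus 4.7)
The plan is to give a reconstruction procedure that takes the half-edge tagged reduced line graph $G_T$ (with its vertex labels from $\{1,\ldots,n-1\}$) and produces $T$ up to the $2n$ symmetries of the regular $n$-gon; the counting bound $|\mathcal{L}(S)|/(2n)$ then follows from the orbit-counting principle, since each orbit of $\mathcal{L}(S)$ under the dihedral action has size at most $2n$. First, observe that $G_T$ is itself a tree: it has $n-1$ vertices (the edges of $T$), its edge count is $\sum_i (n_i-2) = n-2$ by summing path contributions over all cells (cf.\ the proof of Lemma~\ref{lem:uniform} together with Lemma~\ref{lem:sum}), and it is connected because the dual tree of cells of $T$ is connected and two cells sharing a $T$-edge meet at a common vertex of $G_T$.

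The reconstruction will proceed in two phases. In Phase~1 I would recover the combinatorial embedding of $T$ (the cyclic order of $T$-edges around each $T$-vertex) from the tagged $G_T$. Each cell $C$ of $T$ with $n_C$ boundary vertices contributes a path of $n_C-1$ vertices to $G_T$, and adjacent cells share the single vertex of $G_T$ corresponding to their common $T$-edge. At each vertex $e$ of $G_T$, the tags list its neighbors in counterclockwise order around $C(e)$: for a hull edge of $\conv(S)$, all neighbors lie in the unique cell $C(e)$; for an interior edge of $T$, the (at most four) neighbors split into two cyclically consecutive groups in the tag order, one per cell. Starting the propagation at a leaf of $G_T$ (every leaf of $G_T$ is a hull edge of $\conv(S)$, since an interior edge of $T$ always has at least two cell-neighbors) and traversing $G_T$ once, I would obtain a unique partition of the edges of $G_T$ by cell; this yields the dual tree of $T$ and the cyclic boundary order of each cell, from which the cyclic order of $T$-edges around every $T$-vertex follows (two $T$-edges are consecutive around $v$ iff they are adjacent on some common cell boundary).

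In Phase~2 I would place the combinatorial embedding on the regular $n$-gon. After fixing a canonical $T$-vertex $v^*$ (for instance, an endpoint of the $G_T$-vertex with smallest label), there are $n$ choices for the position of $v^*$ on the $n$-gon and a further $2$ choices for the global orientation (reflection). With $v^*$ and the orientation fixed, the cyclic order of $T$-edges around $v^*$ places each of its $T$-neighbors at a unique vertex of $S$, and iterating outward determines the position of every $T$-vertex. Thus exactly $2n$ placements realize the same combinatorial embedding, and they are precisely the orbit of $T$ under the dihedral symmetry group of $S$.

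The main obstacle will be the cell-partition step of Phase~1: at each degree-$4$ vertex of $G_T$, the cyclic order of its four neighbors admits two a priori candidate partitions into two consecutive pairs of size two, and only one corresponds to the true cell partition. I expect to resolve this by propagating cell identity from a leaf of $G_T$ (where the partition is trivial) through $G_T$: at each newly visited vertex of degree at least $2$, the cell of the edge by which we arrive is already known, which picks out one of the two candidate consecutive-pair partitions. Global consistency of this propagation is guaranteed because the partitions actually arise from a genuine planar embedding of $T$.
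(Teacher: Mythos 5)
Your Phase~1 is where the proof breaks down, and the gap is in exactly the step you flag as the ``main obstacle.'' The disambiguation rule you propose --- ``the cell of the edge by which we arrive is already known, which picks out one of the two candidate consecutive-pair partitions'' --- is a non sequitur: knowing which cell the arrival edge belongs to does not tell you which of the \emph{other} $G_T$-edges at the current vertex lie in that same cell, and that is precisely the question to be answered. Moreover, the degree-$4$ case you single out is not the worst one. At a degree-$3$ vertex of $G_T$ every pair of neighbors is cyclically consecutive, so cyclic consecutiveness excludes nothing and there are three candidate groupings; at a degree-$2$ vertex the tags (two half-edges tagged $1,2$ in counterclockwise order along $C(e)$, with no canonical starting point) carry no grouping information at all, yet the two incident $G_T$-edges may lie in one common cell (when $e$ is a hull edge of $T$ in the interior of a cell's boundary path) or in two different cells (when $e$ is an interior edge of $T$ with one neighbor in each incident cell). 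Concretely, for $n=4$ both the path along $\conv(S)$ and the star with one diagonal have $G_T$ equal to a $3$-vertex path whose middle vertex has degree $2$; in the first tree its two incident $G_T$-edges are same-cell, in the second they are different-cell, and your propagation step has nothing local with which to tell these apart. Finally, your closing appeal --- global consistency holds ``because the partitions actually arise from a genuine planar embedding of $T$'' --- only says that a consistent cell partition \emph{exists} (namely the true one); the lemma requires that the tagged data \emph{determines} it up to the dihedral action, i.e.\ uniqueness, and that is exactly what is not proved.

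Your route is also genuinely different from the paper's, which never recovers the cell partition wholesale. The paper argues by induction on $n$: every leaf of $G_T$ corresponds to a hull edge $pq$ of $T$ with $p$ a leaf of $T$; deleting this leaf (and renormalizing the tags at its unique neighbor $f$) yields the tagged reduced line graph of $T-p$, which is reconstructed inductively, and the half-edge tags at $f$ then determine where $p$ re-attaches relative to $q$. If you want to salvage your global two-phase plan, the burden is to prove that the tags together with the vertex labels determine the cell partition (handling degrees $2$ and $3$, not just $4$), and also to justify Phase~2, i.e.\ that a tree with a given rotation system has an essentially unique noncrossing realization on points in convex position --- you assert this via ``iterating outward,'' but it too needs an argument (e.g.\ via the boundary walk of the plane tree). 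The counting step via orbits of size at most $2n$ is fine and matches the paper.
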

\begin{proof}
We proceed by induction on $n$. In the base case ($n=3$), every tree in $\mathcal{L}(S)$ is an edge-labeled path of length 2 along $\conv(S)$, which are equivalent under rotations and reflections of $S$.

For the induction step, assume that $n\geq 4$ and the claim holds for $n-1$ in place of $n$.
We claim that $T$ has an edge $pq$ such that $p$ is a leaf of $T$ and $pq$ is an edge of $\conv(S)$.
Indeed, let $C$ be the cell that corresponds to a leaf of the dual tree; hence it is adjacent to at most one other cell.
Let $P$ be the path in $T$ induced by vertices on the boundary of $C$. The first or last edge of $P$
is not on the boundary of any other cell that has two leaves, hence it is an edge of $\conv(S)$,
and one of its endpoints is a leaf of both $P$ and $T$. This completes the proof of the claim.

By construction, $pq$ is a leaf of $G_T$; we denote its neighbor by $f$ (which is an adjacent edge of the path $P$ above).
After deleting $pq$ from $G_T$, and updating the half-edge tags of $f$ such that they are consecutive integers, we obtain a valid half-edge tagged reduced line graph $G_{T'}$ of the edge-labeled spanning tree $T'=(S\setminus \{p\},E\setminus \{pq\})$. By induction we can reconstruct $T'$ from $G_{T'}$ up to rotations and reflections. The half-edge labels at $f$ uniquely determine the position of point $p$ relative to $q$.
\end{proof}

\begin{theorem}\label{thm:labeled-edgeslide-cx-lower}
Let $S$ be a set of $n\geq 3$ points in convex position, and let $T\in \mathcal{L}(S)$.
There exists an edge-labeled tree $T'\in \mathcal{L}(S)$ such that
transforming $T$ into $T'$ requires at least $\Omega(n\log n)$ edge slides,
and at least $\Omega(\log n)$ simultaneous edge slides.
\end{theorem}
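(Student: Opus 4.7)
The plan is to apply the Sleator--Tarjan--Thurston encoding machinery (Theorem~\ref{thm:short_encodings}) to the reduced line graph representation, and then convert the resulting bound on reachable half-edge tagged graphs to a bound on edge-labeled spanning trees via Lemma~\ref{lem:reconstruct}. Since the transition graph $\mathcal{G}_{\es}^L(S)$ depends only on $|S|$ when $S$ is in convex position, we may without loss of generality take $S$ to be the vertex set of a regular $n$-gon so that Lemma~\ref{lem:reconstruct} applies directly.

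The first step is to introduce a graph grammar $\Gamma$ whose productions encode single edge slides at the level of reduced line graphs. A slide replacing $uv$ by $uw$ along $vw$ (with $\Delta(uvw)$ empty in $T$) performs a bounded local surgery on $G_T$: the vertex representing $uv$ is relabeled to represent $uw$, keeping its edge-label; its adjacency with the vertex representing $vw$ persists; and the remaining adjacencies, coming from the cells containing $uv$, are updated using only information from the local cell structure around $u$, $v$, $w$. Since every vertex of $G_T$ has degree at most $4$, and the combinatorial type of the two cells incident to $uv$ has only a bounded number of possibilities, the grammar $\Gamma$ consists of finitely many production rules whose constants $c$ and $r$ in Theorem~\ref{thm:short_encodings} are independent of $n$.

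With $\Gamma$ in hand, Theorem~\ref{thm:short_encodings} bounds by $(c+1)^{n+rm}$ the number of half-edge tagged reduced line graphs reachable from $G_T$ in at most $m$ edge slides. By Lemma~\ref{lem:reconstruct}, each such graph corresponds to at most $2n$ trees in $\mathcal{L}(S)$, so the number of trees within edge-slide distance $m$ of $T$ is at most $2n\cdot (c+1)^{n+rm}$. A single noncrossing spanning tree on $S$ already admits $(n-1)!$ distinct edge-labelings, hence $|\mathcal{L}(S)|\geq (n-1)! = 2^{\Theta(n\log n)}$. If every $T'\in\mathcal{L}(S)$ were reachable from $T$ in $m$ edge slides, pigeonhole would give $2n\cdot (c+1)^{n+rm}\geq (n-1)!$, forcing $m=\Omega(n\log n)$. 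Consequently some $T'\in\mathcal{L}(S)$ is at edge-slide distance at least $\Omega(n\log n)$ from $T$.

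For the simultaneous edge slide lower bound, I would invoke Lemma~\ref{lem:simulate}: any simultaneous edge slide decomposes into a sequence of single edge slides carried out in an arbitrary order, and it involves at most $n-1$ single slides, since a spanning tree has only $n-1$ edges. Hence $d$ simultaneous edge slides expand into at most $d(n-1)$ single edge slides, and the single-slide lower bound just derived forces $d(n-1)=\Omega(n\log n)$, i.e., $d=\Omega(\log n)$. The main obstacle in this plan is the precise definition of the grammar $\Gamma$: one must enumerate the finitely many local configurations in which $uv$ and $vw$ can sit inside their incident cells, describe how the half-edge tags on the surrounding edges are updated when $uv$ is replaced by $uw$ (since the cyclic order of tree-edges along a modified cell changes), and check that every single edge slide corresponds to exactly one production of $\Gamma$. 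This is case analysis of bounded but nontrivial extent; once it is complete, the counting in the third paragraph goes through verbatim.
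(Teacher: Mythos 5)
Your proposal follows essentially the same route as the paper's proof: half-edge tagged reduced line graphs, a constant-size graph grammar fed into Theorem~\ref{thm:short_encodings}, Lemma~\ref{lem:reconstruct} combined with the $(n-1)!$ edge labelings to force $m=\Omega(n\log n)$ by counting, and simulation of simultaneous slides by single slides (Lemma~\ref{lem:simulate}) to get the $\Omega(\log n)$ bound. The case analysis you flag as the main remaining obstacle is handled in the paper by observing that although the cells incident to the slid edge may be arbitrarily large, a slide of $pq$ to $ps$ affects only the vertex of $G_T$ carrying the slid edge and at most three further vertices (the edges of the boundary path incident to $q$ or $s$), so the grammar has boundedly many productions whose left and right sides have at most four vertices.
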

\begin{proof}
Every edge-labeled tree $T\in \mathcal{L}(S)$ determines a vertex-labeled and half-edge tagged reduced line graphs $G_T$ as defined above.
Let $\mathcal{H}(S)=\{G_T: T\in \mathcal{L}(S)\}$ be the set of these graphs. We now define a graph grammar $\Gamma$ on $\mathcal{H}(S)$ such that each edge slide in $\mathcal{L}(S)$ corresponds to a production in $\mathcal{H}(S)$ (not necessarily the other way round).

\begin{figure}[htbp]
\centering
\includegraphics{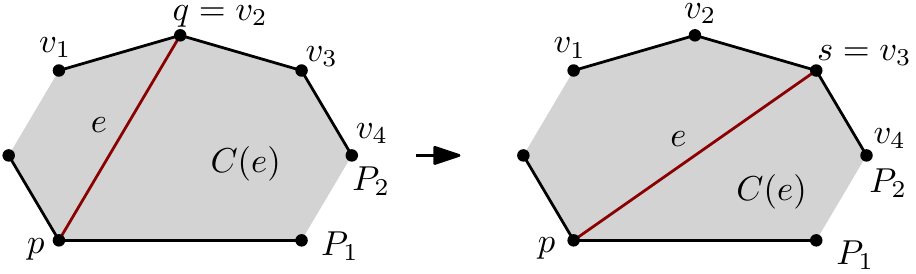}
\caption{An edge slide replaces $pq$ with $ps$, where $p\in P_1$ and $q,s\in P_2=(v_1,\ldots, v_4)$.}
\label{fig:production}
\end{figure}

\begin{figure}[htbp]
\centering
\includegraphics{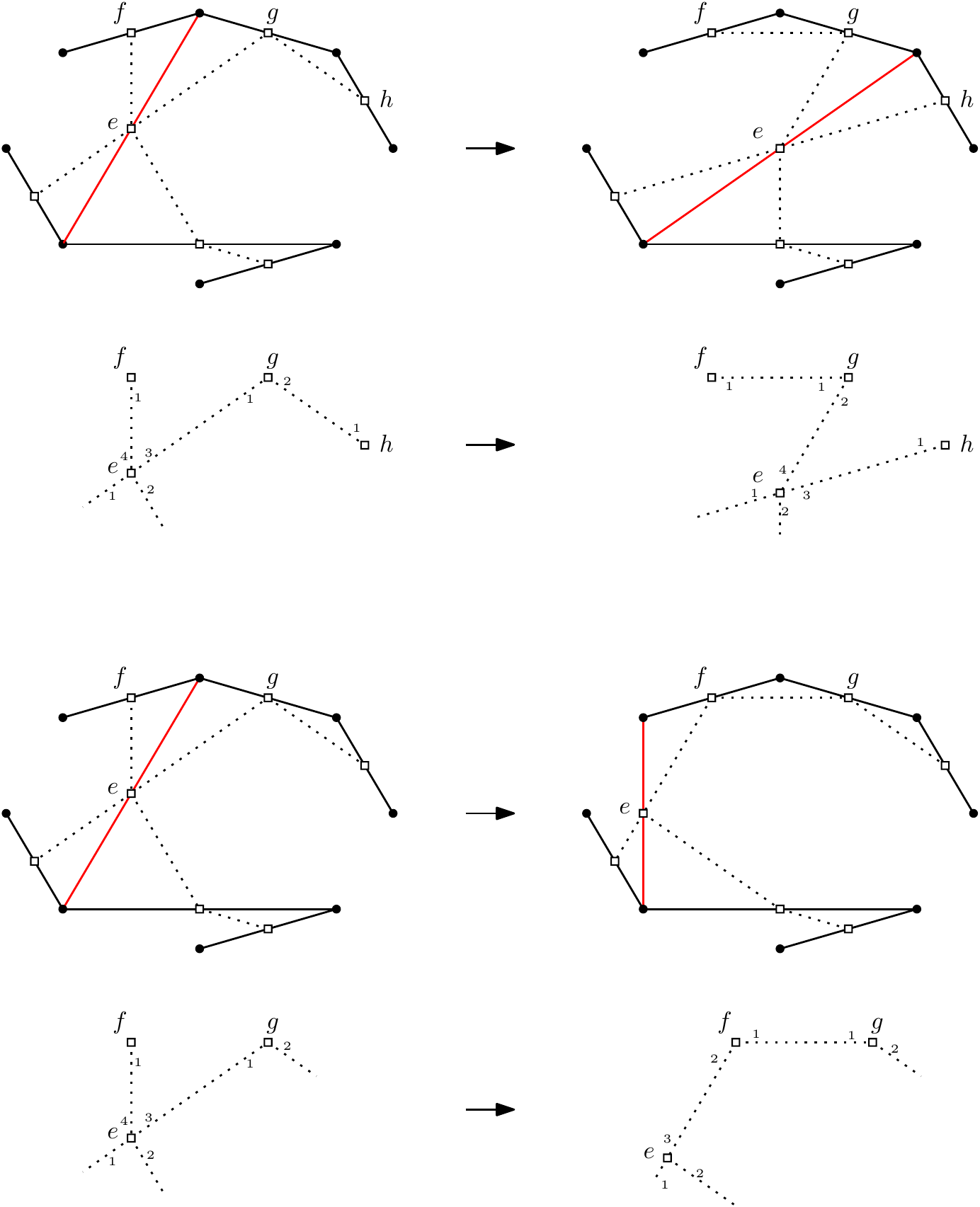}
\caption{Two edge slides and the corresponding production in the reduced line graph (dotted with square vertices). In the production at the top, the half-edges are mapped according to their labels, but in the bottom production, there is one exception: the half-edge $(e,4)$ is mapped to $(f,2)$. As the number of edges in each reduced line graph is $n-2$,
there is such a mapping for every edge slide and thus a corresponding production in the edge graph.}
\label{fig:slide_production}
\end{figure}

Let $T\in \mathcal{L}(S)$, and suppose that an edge slide operation replaces $pq$ with $ps$, and transfers the edge label $e$ from $pq$ to $ps$; see \figurename~\ref{fig:production}. Let $C(pq)$ denote the union of the two dual cells whose boundary contains $pq$. Then the boundary of $C(pq)$ contains two paths in $T$, that we denote by $P_1$ and $P_2$ such that $p\in P_1$ and $q\in P_2$. Since $pq$ and $qs$ are on the boundary of some dual cell, $qs$ is an edge of $P_2$. Assume, without loss of generality, that $P_2=(v_1,\ldots, v_k)$ for some $k\geq 3$, with $q=v_j$ and $s=v_{j+1}$. In the reduced line graph $G_T$, the impact of the edge slide is limited to $e$ and any edge on the boundary of $C(pq)$ incident to $p$, $q$, or $s$. However, the edges incident to $p$ in $P_1$ continue to be adjacent to $e$ in $G_T$. So the impact of the edge slide is limited to $e$ and up to 3 edges in $P_2$ incident to $q$ or $s$; that is, at most four edges.

Ignoring the half-edge tags, there are four cases depending on whether $q=v_1$ and whether $s=v_k$, respectively, each defines a production rule; in each case the right side $R_i$ and the left side $L_i$ have at most 4 vertices. As $G_T$ has $n-2$ edges for every tree $T\in \mathcal{L}(S)$, there exists a bijection between the half-edges in $L_i$ and $R_i$ in all four cases.
Two examples of such productions are given in \figurename~\ref{fig:slide_production}.
Together with all possible valid half-edge tags for the (up to 4) vertices of $R_i$ and $L_i$, resp., the number of production rules is still bounded by a constant $c\leq 4\cdot 4\cdot (3!)^4$.

Therefore, there exist constants $c\in O(1)$ and $r=4$ such that
$c$ is the total number of vertices in all left sides of $\Gamma$,
and $r$ is the maximum number of vertices in any right side of $\Gamma$.

Let $m$ be the diameter of the transformation graph $\mathcal{G}^L_{\es}(S)$
for edge-labeled plane spanning trees under edge slides.
Then every edge-labeled tree in $\mathcal{L}(S)$ is within distance $m$ from $T$.
Hence every half-edge tagged reduced line graphs in $\mathcal{H}(S)$ can
be obtained from $G_T$ using at most $m$ production rules.
On one hand, $|\mathcal{H}(S)|\leq (c+1)^{n+rm}$ by Theorem~\ref{thm:short_encodings}.
On the other hand, $|\mathcal{H}(S)|\geq |\mathcal{L}(S)|/(2n)$ by Lemma~\ref{lem:reconstruct}.
Every spanning tree $T\in \mathcal{L}(S)$ has $n-1$ edges, hence $(n-1)!$ edge labelings,
which yields the lower bound $|\mathcal{L}(S)|\geq (n-1)!$.
It follows that $|\mathcal{H}(S)|\geq |\mathcal{L}(S)|/(2n)\geq (n-1)!/(2n)$.
By contrasting the lower and upper bounds for $|\mathcal{H}(S)|$, we obtain
\begin{eqnarray*}
(n-1)!/(2n) &\leq& (c+1)^{n+rm}\\
\log_{c+1}[(n-1)!/(2n)] &\leq& n+rm \\
n\log_{c+1} n -O(n) &\leq & rm\\
\Omega(n\log n) &\leq & m,
\end{eqnarray*}
as claimed. For simultaneous edge slide operations, a lower bound of $\Omega(\log n)$ follows
analogously to the proof of Proposition~\ref{pro:sim-slide}.
\end{proof}

\section{Conclusions}
\label{sec:conclusion}

Previous work introduced five elementary operations on the space of plane spanning trees $\mathcal{T}(S)$ on a point set $S$ in Euclidean space.
All five operations are known to define a connected transition graph.
This is the first comprehensive analysis of the diameters of these graphs.
Obvious open problems are to close the gaps between the lower and upper bounds in Tables~\ref{tbl:bounds_general}--\ref{tbl:labeled_convex}. 
One might also consider new variations.
For example, we obtain a new variant of \emph{empty-triangle rotation} if we require $\Delta(pqr)$ to be empty of vertices (but not necessarily edges), or a new variant of \emph{edge slide} when not requiring $\Delta(pqr)$ to be empty.
These variations have not been considered and may lead to new geometric insights.

Transition graphs on other common plane geometric graphs have been considered in the literature, but they do not allow for such a rich variety of operations. For the space of noncrossing matchings on $S$, a compatible exchange operation has been defined, but the transition graph is disconnected even if $S$ is in convex position~\cite{AAT15}; it is known that the transition graph has no isolated vertices~\cite{IST13}. Connectedness is known for bipartite geometric matchings, with a tight linear diameter bound~\cite{ABLS15}. For noncrossing Hamiltonian cycles (a.k.a.\ polygonizations) it is a longstanding open problem whether the transition graph of simultaneous compatible exchange is connected.

While our upper bounds on the diameters of transition graphs are constructive, the problem of determining the transformation distance between two given trees seems to be still open (or is trivial) in all settings we discussed. Similar problems have been studied for triangulations: it is NP-hard to determine the flip distance of two triangulations of a point set~\cite{lubiw_pathak,pilz}, but the problem is fixed-parameter tractable in their distance~\cite{triang_fpt}. Even though the transition graph has a small diameter for simultaneous operations, the maximum degree may be exponential in $|S|$, and the distance between two trees thus does not seem to be a suitable parameter for the complexity of the problem.
For points in convex position, the complexity of the related problem on triangulations (already posed in~\cite{STT88}) is still open.

Apart from the edge-labeled variant discussed in Section~\ref{sec:labeled}, we could also consider the problem on directed plane spanning trees.
In a directed spanning tree $T=(S,E)$, the direction of an edge $e_1\in E$ defines an order between the two components of $(S,E\setminus\{e\})$, and we can direct a replacement edge $e_2$ between the two components consistently with this order. Studying whether the transition graphs of directed plane spanning trees are connected, and estimating their diameters under various
operations, is left for future work. We note that if the edges are both labeled and directed, the corresponding exchange graph is no longer connected: for 3 vertices, the transition graph is 2-uniform and has several components.

\paragraph{Acknowledgment.}
Key ideas for our results on simultaneous edge slides were discussed at the GWOP~2017 workshop in Pochtenalp, Switzerland.
We thank all participants for the constructive atmosphere.
Research by Nichols and T\'oth was partially supported by the NSF awards CCF-1422311 and CCF-1423615.
Pilz is supported by a Schr\"odinger fellowship of the Austrian Science Fund (FWF): J-3847-N35; part of this work was done while he was at the Department of Computer Science of ETH Z\"urich.

Last but not least, we thank the anonymous reviewers for many helpful comments and suggestions that helped improve the clarity of presentation in this paper.

\bibliographystyle{plain}
\bibliography{Edge}

\begin{thebibliography}{10}

\bibitem{AAT15}
Oswin Aichholzer, Andrei Asinowski, and Tillmann Miltzow.
\newblock Disjoint compatibility graph of non-crossing matchings of points in
  convex position.
\newblock {\em Electr. J. Combin.}, 22:P1, 2015.

\bibitem{aichholzer2006transforming}
Oswin Aichholzer, Franz Aurenhammer, Clemens Huemer, and Hannes Krasser.
\newblock Transforming spanning trees and pseudo-triangulations.
\newblock {\em Inf. Proc. Lett.}, 97(1):19--22, 2006.

\bibitem{aichholzer2002sequences}
Oswin Aichholzer, Franz Aurenhammer, and Ferran Hurtado.
\newblock Sequences of spanning trees and a fixed tree theorem.
\newblock {\em Comput. Geom.}, 21(1-2):3--20, 2002.

\bibitem{AHH+07}
Oswin Aichholzer, Thomas Hackl, Clemens Huemer, Ferran Hurtado, Hannes Krasser,
  and Birgit Vogtenhuber.
\newblock On the number of plane geometric graphs.
\newblock {\em Graphs and Combinatorics}, 23(1):67--84, 2007.

\bibitem{aichholzer2007quadratic}
Oswin Aichholzer and Klaus Reinhardt.
\newblock A quadratic distance bound on sliding between crossing-free spanning
  trees.
\newblock {\em Comput. Geom.}, 37(3):155--161, 2007.

\bibitem{AKL2007}
Selim~G. Akl, Md.~Kamrul Islam, and Henk Meijer.
\newblock On planar path transformation.
\newblock {\em Inf. Proc. Lett.}, 104(2):59--64, 2007.

\bibitem{ABLS15}
Greg Aloupis, Luis Barba, Stefan Langerman, and Diane~L. Souvaine.
\newblock Bichromatic compatible matchings.
\newblock {\em Comput. Geom.}, 48(8):622--633, 2015.

\bibitem{avis1996reverse}
David Avis and Komei Fukuda.
\newblock Reverse search for enumeration.
\newblock {\em Discrete Appl. Math.}, 65(1-3):21--46, 1996.

\bibitem{Bose2007}
Prosenjit Bose, Jurek Czyzowicz, Zhicheng Gao, Pat Morin, and David~R. Wood.
\newblock Simultaneous diagonal flips in plane triangulations.
\newblock {\em J. Graph Theory}, 54(4):307--330, 2007.

\bibitem{flip_survey}
Prosenjit Bose and Ferran Hurtado.
\newblock Flips in planar graphs.
\newblock {\em Comput. Geom.}, 42(1):60--80, 2009.

\bibitem{BLPV18}
Prosenjit Bose, Anna Lubiw, Vinayak Pathak, and Sander Verdonschot.
\newblock Flipping edge-labelled triangulations.
\newblock {\em Comput. Geom.}, 68:309--326, 2018.

\bibitem{buchin2009transforming}
Kevin Buchin, Andreas Razen, Takeaki Uno, and Uli Wagner.
\newblock Transforming spanning trees: A lower bound.
\newblock {\em Comput. Geom.}, 42(8):724--730, 2009.

\bibitem{cano2013edge}
Javier Cano, Jos{\'e}-Miguel D{\'\i}az-B{\'a}{\~n}ez, Clemens Huemer, and Jorge
  Urrutia.
\newblock The edge rotation graph.
\newblock {\em Graphs Combin.}, 29(5):1--13, 2013.

\bibitem{Cayley89}
Arthur Cayley.
\newblock A theorem on trees.
\newblock {\em Quart. J. Math.}, 23:376--378, 1889.

\bibitem{CHANG2009}
Jou-Ming Chang and Ro-Yu Wu.
\newblock On the diameter of geometric path graphs of points in convex
  position.
\newblock {\em Inf. Proc. Lett.}, 109(8):409--413, 2009.

\bibitem{CSZ85}
Gary Chartrand, Farrokh Saba, and Hung~Bin Zou.
\newblock Edge rotations and distance between graphs.
\newblock {\em Casopis Pest. Math.}, 110:87--91, 1985.

\bibitem{cormen}
Thomas~H. Cormen, Charles~E. Leiserson, Ronald~L. Rivest, and Clifford Stein.
\newblock {\em Introduction to Algorithms, 3rd Edition}.
\newblock {MIT} Press, 2009.

\bibitem{FAUDREE1994}
Ralph~J. Faudree, Richard~H. Schelp, Linda Lesniak, Andr\'as Gy\'arf\'as, and
  Jen\H{o} Lehel.
\newblock On the rotation distance of graphs.
\newblock {\em Discrete Math.}, 126(1):121--135, 1994.

\bibitem{FN99}
Philippe Flajolet and Marc Noy.
\newblock Analytic combinatorics of non-crossing configurations.
\newblock {\em Discrete Mathematics}, 204(1):203--229, 1999.

\bibitem{Galtier2013}
Jer\^ome Galtier, Ferran Hurtado, Marc Noy, St\'ephane P\'erennes, and Jorge
  Urrutia.
\newblock Simultaneous edge flipping in triangulations.
\newblock {\em Int. J. Comput. Geom. Appl.}, 13(2):113--134, 2003.

\bibitem{Garcia14}
Alfredo Garc\'{\i}a, Clemens Huemer, Ferran Hurtado, and Javier Tejel.
\newblock Compatible spanning trees.
\newblock {\em Computational Geometry}, 47(5):563--584, 2014.

\bibitem{GODDARD1996}
Wayne Goddard and Henda~C. Swart.
\newblock Distances between graphs under edge operations.
\newblock {\em Discrete Math.}, 161(1):121--132, 1996.

\bibitem{HERNANDO1999}
Martin~C. Hernando, Ferran Hurtado, Alberto M{\'a}rquez, Marc\`e Mora, and Marc
  Noy.
\newblock Geometric tree graphs of points in convex position.
\newblock {\em Discrete Appl. Math.}, 93(1):51--66, 1999.

\bibitem{HSSSTW13}
Michael Hoffmann, Andr\'e Schulz, Micha Sharir, Adam Sheffer, Csaba~D. T\'oth,
  and Emo Welzl.
\newblock Counting plane graphs: Flippability and its applications.
\newblock In J\'anos Pach, editor, {\em Thirty Essays on Geometric Graph
  Theory}, pages 303--326. Springer, New York, 2013.

\bibitem{IST13}
Mashhood Ishaque, Diane~L. Souvaine, and Csaba~D. T\'oth.
\newblock Disjoint compatible geometric matchings.
\newblock {\em Discrete Comput. Geom.}, 49:89--131, 2013.

\bibitem{triang_fpt}
Iyad~A. Kanj, Eric Sedgwick, and Ge~Xia.
\newblock Computing the flip distance between triangulations.
\newblock {\em Discrete Comput. Geom.}, 58(2):313--344, 2017.

\bibitem{Keller2016}
Chaya Keller and Micha~A. Perles.
\newblock Reconstruction of the geometric structure of a set of points in the
  plane from its geometric tree graph.
\newblock {\em Discrete Comput. Geom.}, 55(3):610--637, 2016.

\bibitem{KellerS18}
Chaya Keller and Yael Stein.
\newblock Reconstruction of the path graph.
\newblock {\em Comput. Geom.}, 72:1--10, 2018.

\bibitem{dual_diameter}
Matias Korman, Stefan Langerman, Wolfgang Mulzer, Alexander Pilz, Maria
  Saumell, and Birgit Vogtenhuber.
\newblock The dual diameter of triangulations.
\newblock {\em Comput. Geom.}, 68:243--252, 2018.

\bibitem{LMW19}
Anna Lubiw, Zuzana Mas{\'a}rov{\'a}, and Uli Wagner.
\newblock A proof of the orbit conjecture for flipping edge-labelled
  triangulations.
\newblock {\em Discrete {\&} Computational Geometry}, 61(4):880--898, 2019.

\bibitem{lubiw_pathak}
Anna Lubiw and Vinayak Pathak.
\newblock Flip distance between two triangulations of a point set is
  {NP}-complete.
\newblock {\em Comput. Geom.}, 49:17--23, 2015.

\bibitem{OropezaT19}
Marcos Oropeza and Csaba~D. T{\'{o}}th.
\newblock Reconstruction of the crossing type of a point set from the
  compatible exchange graph of noncrossing spanning trees.
\newblock In {\em Proc. 21st {IAPR} Int. Conf. Discrete Geometry for Computer
  Imagery ({DGCI})}, volume 11414 of {\em LNCS}, pages 234--245. Springer,
  2019.

\bibitem{Oaxley1993}
James~G. Oxley.
\newblock {\em Matroid Theory}.
\newblock Oxford University Press, 1993.

\bibitem{pgmp-eptvsp-91}
J\'{a}nos Pach, Peter Gritzmann, Bojan Mohar, and Richard Pollack.
\newblock Embedding a planar triangulation with vertices at specified points.
\newblock {\em American {M}athematical {M}onthly}, 98:165--166, 1991.

\bibitem{pilz}
Alexander Pilz.
\newblock Flip distance between triangulations of a planar point set is
  {APX}-hard.
\newblock {\em Comput. Geom.}, 47(5):589--604, 2014.

\bibitem{Pournin14}
Lionel Pournin.
\newblock The diameter of associahedra.
\newblock {\em Advances in Mathematics}, 259:13--42, 2014.

\bibitem{STT88}
Daniel~D. Sleator, Robert~E. Tarjan, and William~P. Thurston.
\newblock Rotation distance, triangulations, and hyperbolic geometry.
\newblock {\em J. Amer. Math. Soc.}, 1:647--681, 1988.

\bibitem{STT92}
Daniel~D. Sleator, Robert~E. Tarjan, and William~P. Thurston.
\newblock Short encodings of evolving structures.
\newblock {\em {SIAM} J. Discrete Math.}, 5(3):428--450, 1992.

\bibitem{WU2011}
Ro-Yu Wu, Jou-Ming Chang, Kung-Jui Pai, and Yue-Li Wang.
\newblock Amortized efficiency of generating planar paths in convex position.
\newblock {\em Theor. Comput. Sci.}, 412(35):4504--4512, 2011.

\end{thebibliography}
\end{document}